\newcommand{\C}{\mathbb{C}}
\newcommand{\R}{\mathbb{R}}
\newcommand{\Q}{\mathbb{Q}}
\newcommand{\Z}{\mathbb{Z}}
\newcommand{\N}{\mathbb{N}}
\mathchardef\mhyphen="2D
\renewcommand{\1}{\mathbbm{1}}
\newcommand{\Res}{\text{Res}}
\newcommand{\supp}{\text{supp}}
\newcommand{\arsim}{\xrightarrow{\sim}}
\newcommand{\commsq}[8]{\xymatrix{ #1 \ar[r]^{#5} \ar[d]_{#6} & #2 \ar[d]^{#7} \\ #3 \ar[r]_{#8} & #4 } }
\newcommand{\commtri}[6]{ \xymatrixrowsep{2.5pc}\xymatrixcolsep{1.25pc}
\xymatrix{
#1 \ar[rr]^{#4} \ar[dr]_{#5} & & #2 \ar[dl]^{#6} \\
& #3 & } }
\newcommand{\rcommtri}[6]{ \xymatrixrowsep{1.5pc}\xymatrixcolsep{3pc} \xymatrix{ & #2 \ar[dd]^{#6} \\ #1 \ar[ur]^{#4} \ar[dr]_{#5} & \\ & #3 } }
\newcommand{\ucommtri}[6]{ \xymatrixrowsep{2.5pc}\xymatrixcolsep{1.25pc}
\xymatrix{
& #1 \ar[dl]_{#4} \ar[dr]^{#5} & \\
#2 \ar[rr]_{#6} & & #3 } }
\newcommand{\Spec}{\textnormal{Spec}}
\newcommand{\E}{\mathscr{E}}
\newcommand{\F}{\mathscr{F}}
\newcommand{\p}{\mathfrak{p}}
\newcommand{\NN}{\mathscr{N}}
\newcommand{\OO}{\mathscr{O}}
\newcommand{\Gm}{\mathbb{G}_m}
\newcommand{\Sch}{\mathfrak{Sch}}
\newcommand{\QCoh}{\mathfrak{QCoh}}
\newcommand{\Coh}{\mathfrak{Coh}}
\newcommand{\Vect}{\mathfrak{Vect}}
\newcommand{\ParVect}{\mathfrak{ParVect}}
\newcommand{\Bun}{\textnormal{Bun}}
\newcommand{\HHom}{\textnormal{\underline{Hom}}}
\newcommand{\DD}{\mathbb{D}}
\newcommand{\op}{\textnormal{op}}
\newcommand{\Ob}{\textnormal{Ob}}
\newcommand{\ev}{\textnormal{ev}}
\newcommand{\g}{\mathfrak{g}}
\newcommand{\h}{\mathfrak{h}}
\renewcommand{\t}{\mathfrak{t}}
\newcommand{\Lie}{\textnormal{Lie}}
\newcommand{\Ad}{\textnormal{Ad}}
\newcommand{\ad}{\textnormal{ad}}
\newcommand{\Hom}{\textnormal{Hom}}
\newcommand{\diag}{\textnormal{diag}}
\newcommand{\coker}{\textnormal{coker}}
\newcommand{\pardeg}{\textnormal{par-deg}}
\newcommand{\K}{\mathcal{K}}
\newcommand{\GG}{\mathcal{G}}
\newcommand{\XX}{\mathfrak{X}}
\newcommand{\YY}{\mathfrak{Y}}
\newcommand{\Gmod}{G\mhyphen\mathfrak{mod}}
\newcommand{\f}{\mathfrak{f}}
\newcommand{\PPP}{\mathscr{P}}
\newcommand{\At}{\text{At}}
\newcommand{\mmu}{\boldsymbol{\mu}}
\newcommand{\V}{\mathcal{V}}
\renewcommand{\NN}{\mathcal{N}}
\newcommand{\WW}{\mathcal{W}}
\newtheorem{theorem}{Theorem}[section]
\newtheorem{lem}[theorem]{Lemma}
\newtheorem{cor}[theorem]{Corollary}
\newtheorem{prop}[theorem]{Proposition}
\theoremstyle{definition}
\newtheorem{defn}[theorem]{Definition}
\newtheorem{ex}[theorem]{Example}
\theoremstyle{remark}
\newtheorem{rmk}[theorem]{Remark}
\numberwithin{equation}{section}
\begin{document}

\title{Root stacks, principal bundles and connections}

\author[I. Biswas]{Indranil Biswas}

\address{School of Mathematics, Tata Institute of Fundamental
Research, Homi Bhabha Road, Bombay 400005, India}

\email{indranil@math.tifr.res.in}

\author[S. Majumder]{Souradeep Majumder}

\email{souradip@math.tifr.res.in}

\author[M. L. Wong]{Michael Lennox Wong}

\email{wong@math.tifr.res.in}

\subjclass[2000]{14D23, 14H60, 53B15}

\keywords{Root stack, parabolic bundle, principal bundle, connection}

\date{}

\begin{abstract}
We investigate principal bundles over a root stack.  In the case of dimension one, we 
generalize the criterion of Weil and Atiyah for a principal bundle to have an algebraic connection.
\end{abstract}

\maketitle

\section*{Introduction}

The notion of a parabolic vector bundle over a compact Riemann surface was first developed in \cite{MS} to obtain a version of the theorem of Narasimhan--Seshadri \cite{NS}, in the case where one wants to describe the moduli space of unitary representations of the fundamental group of the surface with a finite set of punctures.  The extra structure one obtains is the data of a flag in the fibre and a set of real numbers, called weights, at each of the punctures.  When the weights are rational numbers, parabolic vector bundles
can be described as equivariant vector bundles on a suitable Galois cover 
\cite{BiswasOrbifold}, \cite{Boden_thesis}, \cite{NasatyrSteer}. One drawback of this correspondence is that it requires introduction to a new parameter, namely the Galois group for the covering. To remedy this, N.\ Borne has shown that the  category of parabolic vector bundles over a $\C$-scheme with weights lying in  $\tfrac{1}{r} \Z$, with $r \in \N$, is equivalent to the category of vector  bundles over a related object called the ``$r$-th root stack'' which depends only  on the original scheme, the parabolic divisor and the natural number $r$  \cite{Borne}, \cite{Bor2}.  The root stack essentially gives the scheme some ``orbifold structure,'' by putting a cyclic group of order $r$ over the divisor.  This approach of Borne for parabolic bundles has turned out to be very useful (see, for example, \cite{BD}).

A coherent generalization of the notion of a parabolic structure for a principal bundle, even over curves, has been somewhat elusive, largely because it has not been clear what the analogue of a set of weights should be.  The main aim of this paper is to advocate Borne's approach of viewing a parabolic bundle over a quasi-projective variety as a bundle over an associated root stack.  As such, the article begins by defining principal bundles over a smooth algebraic stack over $\C$ and basic constructions, such as associated fibre bundles and reduction of structure group.

One result of the paper, stated in Section \ref{ConnectionCondition}, gives a 
condition for the existence of a connection over a principal bundle over an 
algebraic stack in the style of \cite{AzadBiswas2002}.
To explain this condition,
let $G$ be a reductive affine algebraic group over $\C$. Let $\XX
= \XX_{\OO_X(Z), s, r}$ be a complete root stack of dimension one. We prove that
a principal $G$-bundle $E_G$ over $\XX$ admits a connection if and only if
for any reduction $\F\, \subset E_G$ to a Levi factor $L$ of a parabolic subgroup
of $G$, and any character $\chi : L \to
\C^\times$, the associated line bundle $\F
\times^\chi \C$ satisfies $\deg_\XX \mathcal{M} = 0$. (See
Theorem \ref{connectioncondition}.)

In Section \ref{RootStacks}, we review the construction of a root stack as given in \cite{Cadman}.  We show that in the special case of the Galois covers considered in \cite{BiswasOrbifold}, where all isotropy groups are cyclic of the same order, the associated root stack is in fact the quotient stack, and point out that in the case of a curve, one always has such a realization.

Of course, justifying the root stack approach to parabolic structures necessitates a comparison with existing approaches in the literature, and this is done in Sections \ref{TensorFunctors} and \ref{ParahoricTorsors}.  In his characterization of finite vector bundles \cite{Nori1976}, M.V.\ Nori gave a realization of a principal $G$-bundle over a scheme over an arbitrary field as a tensor functor from the category of finite-dimensional representations of $G$ to the category of vector bundles over the scheme.  One approach that has been taken is that of \cite{BBN}, where a parabolic principal bundle was defined as a tensor functor which takes values in the category of parabolic vector bundles.  Section \ref{TensorFunctors} is concerned with showing that this notion and that of a principal bundle over a root stack are equivalent.

A notion which has appeared in the literature recently (e.g., \cite{PappasRapoport, Heinloth_Uniformization}) is that of a (torsor for a) parahoric Bruhat--Tits group scheme.  The specific instances of this phenomenon which are relevant for us appear in a paper of V.\ Balaji and C.S.\ Seshadri \cite{BalajiSeshadri2012}, where such a torsor is generically a $G$-bundle.  They show that equivariant $G$-bundles for a Galois cover correspond to parahoric torsors on the base.  In their description, it is the isotropy representation (in $G$) over the ramification points of the Galois cover which determines the appropriate Bruhat--Tits group scheme (i.e., the analogue of the flag type for parabolic vector bundles).  Such representations may be thought of as restrictions of cocharacters of the cover, and hence as rational cocharacters on the base.  It is this that yields the analogous notion of a set of weights for parabolic bundle/parahoric torsor (see Section \ref{PGST}).  This was already suggested by P.\ Boalch in his local classification of connections on $G$-bundles for reductive groups \cite{Boalch_Parahoric}.  The aim of Section \ref{ParahoricTorsors} is to show that these ideas are all readily expressible in terms of principal bundles over root stacks.  Specifically, we define the local type of a principal bundle over the root stack and show that these correspond to parahoric torsors of a given type.  Finally, we restrict Boalch's definition of a logarithmic parahoric connection using a condition paralleling the one for parabolic vector bundles (e.g., as in \cite[\S2.2]{BiswasLogares2011}) and show that one has a correspondence between connections on a principal bundle over the root stack and connections on the parahoric torsor.

MLW would like to thank V.\ Balaji for some helpful clarifications and for sharing a draft version of \cite{BalajiSeshadri2012}.  He is also grateful for the support of the Fonds qu\'eb\'ecois de la recherche sur la nature et les technologies in the form of a Bourse de recherche postdoctorale (B3).

\section{Principal Bundles on Algebraic Stacks} \label{pbstack}

We will work over the category of $\C$-schemes, which we denote by $\Sch/\C$.  If not otherwise indicated, $\XX$ will be a smooth algebraic stack locally of finite type over $\C$.  We will also fix a complex algebraic group $G$.

For a $\C$-scheme $U$, the fibre category of $\XX$ over $U$ will be denoted by $\XX(U)$.  Via the $2$-Yoneda lemma, we will freely identify an object $\f \in \Ob \, \XX(U)$ with a $1$-morphism $\f : U \to \XX$.

\subsection{Coherent Sheaves} \label{coherentsheaves}

A \emph{coherent sheaf} $\V$ on $\XX$ consists of the following data (e.g.\ \cite[Definition 7.18]{Vistoli_IntersectionTheory}, \cite[Definition 2.50]{Gomez_Stacks}, \cite[Lemme 12.2.1]{LMB}).  If $\f : U \to \XX$ is a smooth atlas (i.e., if $U$ is a $\C$-scheme and $\f$ is a smooth map), then we have a coherent $\OO_U$-module $\V_\f$.  For a (2-)commutative diagram
\begin{align} \label{overXX}
\vcenter{
\commtri{ U }{ V }{ \XX }{ k }{ \f }{ \g }
}
\end{align}
with $\f, \g$ smooth atlases, we are given an isomorphism
\begin{align} \label{sheafiso}
\alpha_k^\V = \alpha_k : \V_\f \arsim k^* \V_\g,
\end{align}
such that for a (2-)commutative diagram
\begin{align} \label{compositionoverXX}
\vcenter{
\xymatrix{
U \ar[dr]_{\f} \ar[r]^k & V \ar[d]^\g \ar[r]^l & W \ar[dl]^ \h \\
& \XX & } }
\end{align}
the diagram
\begin{align} \label{compatibilityconditionCS}
\vcenter{
\xymatrix{
\V_\f \ar[r]^-{\alpha_{l \circ k}} \ar[d]_{\alpha_k} & (l \circ k)^* \V_\h \ar@{=}[d] \\
k^* \V_\g \ar[r]_{k^* \alpha_l} & k^* l^* \V_\h }
}
\end{align}
commutes, where the two objects on the right side are identified via the canonical isomorphism of functors $(l \circ k)^* \arsim k^* l^*$.

We will call a coherent sheaf $\V$ on $\XX$ a \emph{vector bundle} if $\V_\f$ is a locally free $\OO_U$-module whenever $\f \in \Ob \, \XX(U)$.  

If $\XX$ is a Deligne--Mumford stack, then it is enough to specify $\V_\f$ for \'etale atlases $\f : U \to \XX$.  In this case, we may define the sheaf of differentials $\Omega_\XX^1 = \Omega_{\XX/\C}^1$ as follows.  For an \'etale morphism $\f : U \to \XX$, we simply set
\begin{align*}
\Omega_{\XX, \f}^1 := \Omega_{U/\C}^1.
\end{align*}
Given a diagram (\ref{overXX}), with $\f$ and $\g$, and hence $k$, \'etale, one 
has an exact sequence \cite[Morphisms of Schemes, Lemma 32.16]{StacksProject}
\begin{align*}
0 \to k^* \Omega_{V/\C}^1 \to \Omega_{U/\C}^1 \to \Omega_{U/V}^1 \to 0;
\end{align*}
since $k$ is \'etale, the last term vanishes, so we obtain isomorphisms (\ref{sheafiso}).  The fact that they satisfy the compatibility condition (\ref{compatibilityconditionCS}) is due to their canonical nature.

If $\V$ is a vector bundle over a Deligne--Mumford stack $\XX$, by a \emph{connection on $\V$} we will mean the data of a connection $\nabla_\f$ on $\V_\f$ for each $\f \in \Ob \, \XX(U)$ such that for a diagram (\ref{overXX}), the following commutes:
\begin{align*}
\xymatrixcolsep{5pc}
\xymatrix{\V_\f \ar[r]^{\nabla_\f} \ar[d]_{ \alpha_k^\V } & \V_\f \otimes_{\OO_U} \ar[d]^{ \alpha_k^\V \otimes \alpha_k^{\Omega} } \\
k^* \V_\g \ar[r]_-{ k^* \nabla_\g } & k^* \V_\g \otimes_{\OO_U} k^* \Omega_{V/\C}^1 }
\end{align*}

\subsection{Principal Bundles} \label{principalbundles}

With $\XX$ and $G$ as in the beginning of the section, we can take as a definition of a \emph{principal $G$-bundle on $\XX$} the following, paralleling the definition of a coherent sheaf.  For each smooth atlas $\f : U \to \XX$, we are given the data of a principal $G$-bundle $\E_\f$ over $U$, and for each diagram (\ref{overXX}) we have isomorphisms
\begin{align*}
\beta_k = \beta_k^\E : \E_\f \arsim k^* \E_\g
\end{align*}
such that for a diagram (\ref{compositionoverXX}), one has a commutative diagram
\begin{align} \label{compatibilitycondition}
\vcenter{
\xymatrix{
\E_\f \ar[r]^-{\beta_{l \circ k}} \ar[d]_{\beta_k} & (l \circ k)^* \E_\h \ar@{=}[d] \\
k^* \E_\g \ar[r]_{k^* \beta_l} & k^* l^* \E_\h, } }
\end{align}
where again, we use the canonical isomorphism $(l \circ k)^* \arsim k^* l^*$ to identify the two bundles on the right hand side.

Let $\F$ be another $G$-bundle over $\XX$.  A \emph{morphism} of $G$-bundles $\varphi : \E \to \F$ consists of the data of a morphism $\varphi_\f : \E_\f \to \F_\f$ for each smooth atlas $\f : U \to \XX$, such that given a diagram (\ref{overXX}), the square
\begin{align*}
\commsq{ \E_\f }{ \F_\f }{ k^* \E_\g }{ k^* \F_\g }{ \varphi_\f }{ \beta_k^\E }{ \beta_k^\F }{ k^*\varphi_\g }
\end{align*}
commutes.  It is not hard to see then that the category of principal $G$-bundles over $\XX$ is a groupoid.

We will recall that the classifying stack $BG$ is the fibred category whose objects over a $\C$-scheme $U$ are principal $G$-bundles over $U$ and whose morphisms are pullback diagrams of $G$-bundles.  The following is not hard to verify.

\begin{lem} \label{GbundleBG}
The datum of a principal $G$-bundle over $\XX$ in the above sense is equivalent to the datum of a morphism $\XX \to BG$.  Two $G$-bundles over $\XX$ are isomorphic if and only if the corresponding morphisms $\XX \to BG$ are $2$-isomorphic.
\end{lem}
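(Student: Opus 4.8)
The plan is to unwind both sides of the claimed equivalence and exhibit a functor in each direction, then check they are mutually quasi-inverse. First I would recall that a morphism $F : \XX \to BG$ is, by definition of the $2$-fibre product / pseudo-functor formalism, the assignment to each object $\f \in \Ob\,\XX(U)$ of a principal $G$-bundle $F(\f)$ over $U$, together with, for each arrow $a : \f \to \g$ in $\XX$ lying over a morphism $u : U \to V$ of $\C$-schemes, an isomorphism $F(a) : F(\f) \arsim u^* F(\g)$ of $G$-bundles over $U$, these isomorphisms being compatible with composition in the evident way (the cocycle condition). So the content of the lemma is that this datum is the same as the datum $(\E_\f, \beta_k)$ in our definition.

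The key steps are as follows. Given a $G$-bundle $\E = (\E_\f, \beta_k)$ on $\XX$ in our sense, I would first observe that although our definition only prescribes $\E_\f$ for \emph{smooth atlases} $\f$, this in fact determines a bundle for \emph{every} object $\f \in \Ob\,\XX(U)$: choose a smooth atlas $p : P \to \XX$ (which exists since $\XX$ is an algebraic stack), form the fibre product $U \times_{\XX} P$, which is a scheme smooth and surjective over $U$, note that its projection to $\XX$ is a smooth atlas, pull back $\E$ along it, and descend along the smooth cover $U \times_{\XX} P \to U$; the isomorphisms $\beta_k$ supply exactly the descent datum, and its cocycle condition \eqref{compatibilitycondition} guarantees effectivity (smooth descent for the stack $BG$, equivalently fpqc descent for $G$-bundles). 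One must check this is independent of the chosen atlas up to canonical isomorphism — again a consequence of the $\beta_k$ and their compatibility. This produces a pseudo-functor $\XX \to BG$, i.e.\ a morphism. Conversely, a morphism $F : \XX \to BG$ restricts in particular to smooth atlases and, on diagrams \eqref{overXX} of smooth atlases, to the isomorphisms $\beta_k := F(k)$, whose compatibility \eqref{compatibilitycondition} is precisely the pseudo-functoriality (cocycle) condition on $F$; so we get a $G$-bundle on $\XX$ in our sense. These two constructions are visibly inverse to one another up to natural isomorphism, because restricting the descended bundle back to a smooth atlas recovers $\E_\f$ canonically. Finally, for the statement about isomorphisms: a morphism of $G$-bundles $\varphi : \E \to \F$ over $\XX$ in our sense is, on smooth atlases, exactly the data of a $2$-morphism (natural transformation) between the associated morphisms $\XX \to BG$, since a morphism in the groupoid $BG(U)$ is an isomorphism of $G$-bundles over $U$ and the compatibility square for $\varphi$ is the naturality square; hence $\E \cong \F$ iff the morphisms are $2$-isomorphic.

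The main obstacle — really the only nontrivial point — is the descent/extension step: showing that the data specified on smooth atlases alone genuinely glues to a morphism defined on the whole fibred category, and that it does so canonically. This rests on the fact that $BG$ is a stack for the smooth (equivalently fpqc) topology, so that $G$-bundles satisfy effective descent along smooth surjections; everything else is a formal manipulation of $2$-categorical diagrams. I would state the descent fact as a citation (e.g.\ \cite[Tag 0246]{StacksProject} for the stacky property of $BG$, or the discussion in \cite{LMB}) rather than prove it, and keep the verification of the cocycle bookkeeping to a sentence, since it is exactly diagram \eqref{compatibilitycondition}.
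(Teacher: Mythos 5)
The paper offers no proof of this lemma (it is dismissed with ``The following is not hard to verify''), so there is nothing to compare against except the intended standard argument, which is what you supply: restriction to smooth atlases in one direction, smooth descent for $BG$ in the other, and the observation that a morphism of $G$-bundles over $\XX$ is exactly a $2$-morphism of the associated functors to $BG$. Your identification of the descent/extension step as the only nontrivial point is right, and your treatment of the isomorphism statement is correct.

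There is, however, one assertion in your extension step that is false as written and needs a small repair. You claim that for an arbitrary object $\f : U \to \XX$ and a smooth surjective atlas $p : P \to \XX$, the projection $U \times_\XX P \to \XX$ is again a smooth atlas. It is not in general: that composite factors as $U \times_\XX P \to P \to \XX$, and the first map is the base change of $\f$ along $p$, hence is smooth only when $\f$ is (take $\XX = \Spec \C$ and $U$ singular to see the failure). So you cannot invoke the stack-level data $\E_{(\cdot)}$ on $U \times_\XX P \to \XX$ directly. The fix is immediate and does not change the architecture of the argument: pull back the \emph{scheme-level} bundle $\E_p$ on $P$ along the scheme morphism $\mathrm{pr}_2 : U \times_\XX P \to P$ (ordinary pullback of a $G$-bundle along a map of schemes, no smoothness required), equip it with the descent datum relative to $U \times_\XX P \to U$ obtained from the isomorphism $\sigma : \mathrm{pr}_1^* \E_p \arsim \mathrm{pr}_2^* \E_p$ on $R = P \times_\XX P$ (both projections $R \to \XX$ \emph{are} smooth, so the $\beta$'s do apply there, and \eqref{compatibilitycondition} gives the cocycle condition), and then descend. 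Two further points worth a clause each: $U \times_\XX P$ is a priori only an algebraic space unless the diagonal of $\XX$ is representable by schemes (true for the root stacks of this paper), and effectivity of descent for $G$-torsors uses that $G$ is affine, which is the standing assumption once $G$ is taken reductive. With these adjustments the proof is complete.
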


Given $\XX, G$ as above, we may now consider the fibred category whose objects over a $\C$-scheme $U$ are $G$-bundles $E \to \XX \times U$ and whose morphisms are pullback diagrams of $G$-bundles.  We will denote this category by 
\begin{align*}
\Bun_G \XX.
\end{align*}

Let $\XX, \YY$ be separated algebraic stacks of finite presentation over $\C$ 
with finite diagonals.  We may consider the fibred category $\HHom_\C( \XX, 
\YY)$ whose fibre category over a $\C$-scheme $U$ is the groupoid of functors $\Hom_U(\XX \times U, \YY \times U) = \Hom_\C (\XX \times U, \YY)$.

Now, taking $\XX \times U$ in Lemma \ref{GbundleBG} and $\YY := BG$, it is not hard to see that the following holds.

\begin{prop}
There is an equivalence
\begin{align*}
\Bun_G \XX \cong {\HHom}_\C(\XX, BG).
\end{align*}
\end{prop}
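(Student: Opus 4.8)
The plan is to produce a cartesian functor $\Phi \colon \Bun_G \XX \to \HHom_\C(\XX, BG)$ of categories over $\Sch/\C$ which restricts to an equivalence of groupoids on each fibre; since both sides are fibred in groupoids over $\Sch/\C$, any such $\Phi$ is automatically an equivalence of fibred categories, which is the assertion.

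First I would fix a $\C$-scheme $U$. Since $\XX$ is a smooth algebraic stack locally of finite type over $\C$, so is $\XX \times U$, so Lemma \ref{GbundleBG} applies with $\XX \times U$ in place of $\XX$: the datum of a principal $G$-bundle $E \to \XX \times U$ --- that is, an object of $(\Bun_G \XX)(U)$ --- is equivalent to the datum of a morphism $\XX \times U \to BG$, i.e.\ an object of $\HHom_\C(\XX, BG)(U) = \Hom_\C(\XX \times U, BG)$, and under this correspondence isomorphisms of $G$-bundles correspond exactly to $2$-isomorphisms of morphisms. Concretely the correspondence is just the universal property of $BG$: a $G$-bundle over $\XX \times U$ amounts to a compatible family of $G$-bundles over all atlases of $\XX \times U$ (this is precisely the definition recalled in \S\ref{principalbundles}, with the coherence data $(\ref{compatibilitycondition})$), which is the same thing as a $1$-morphism $\XX \times U \to BG$. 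This gives the fibrewise equivalences $\Phi_U \colon (\Bun_G \XX)(U) \arsim \HHom_\C(\XX, BG)(U)$.

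Next I would verify that the $\Phi_U$ are compatible with base change, so that they glue to a cartesian functor $\Phi$. For a morphism $g \colon U' \to U$, set $g_\XX := \id_\XX \times g \colon \XX \times U' \to \XX \times U$. Pullback along $g_\XX$ computes the transition functor in $\Bun_G \XX$ (sending $E$ to $g_\XX^* E$) while composition with $g_\XX$ computes it in $\HHom_\C(\XX, BG)$ (sending $\psi$ to $\psi \circ g_\XX$), and the identification of these two operations under $\Phi$ is immediate from the construction above: pulling back the universal $G$-bundle and precomposing with $g_\XX$ are literally the same. The comparison $2$-isomorphisms $\Phi_{U'} \circ g_\XX^{\,*} \cong (-\circ g_\XX) \circ \Phi_U$ are the canonical ones furnished by the compatibility isomorphisms $(\ref{compatibilitycondition})$ together with the pseudofunctoriality of $BG$.

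The only point that requires care --- and it is bookkeeping rather than a genuine obstacle --- is checking the coherence of all these $2$-isomorphisms, i.e.\ that for a pair of composable morphisms $U'' \to U' \to U$ the comparison cells fit together correctly, so that $\Phi$ is a bona fide $1$-morphism of fibred categories rather than merely a fibrewise equivalence. This reduces to the coherence already built into the definition of a $G$-bundle on a stack (the diagrams $(\ref{compatibilitycondition})$ and the analogous squares for morphisms of $G$-bundles) together with the pseudofunctor axioms for $BG$; no further input is needed. With $\Phi$ in hand as a cartesian functor between categories fibred in groupoids that is an equivalence over each $U$, it is an equivalence of fibred categories, as claimed.
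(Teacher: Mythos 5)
Your proposal is correct and follows the same route the paper indicates: applying Lemma \ref{GbundleBG} with $\XX \times U$ in place of $\XX$ to obtain the fibrewise equivalences, then checking compatibility with base change. The paper leaves these verifications to the reader, so your write-up simply supplies the bookkeeping it omits.
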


\subsection{Principal Bundles on Quotient Stacks}

Let $Y$ be $\C$-scheme and let $\Gamma$ be a complex algebraic group acting on 
$Y$ on the left with action map $\lambda : \Gamma \times Y \to Y$.  Let
$p_\Gamma$ and $p_Y$ be the projections of $\Gamma \times Y$ to $p_\Gamma$ and
$p_Y$ respectively.  Later, we 
will primarily be concerned with the case where $\Gamma$ is finite, but what we 
record here holds in greater generality.

Let $\pi : E \to Y$ be a (right) $G$-bundle over $Y$.  Suppose $\Lambda : \Gamma \times E \to E$ is a left action for which $\pi$ is $\Gamma$-equivariant and which commutes with the $G$-action $\rho : E \times G \to G$, i.e.,
\begin{align*}
 \xymatrix{
\Gamma \times E \times G \ar[r]^-{ \1_\Gamma \times \rho } \ar[d]_{ \Lambda \times \1_G } & \Gamma \times E \ar[d]^{ \Lambda } \\
E \times G \ar[r]_-{\rho} & E }
\end{align*}
commutes. In this case, we call $\Lambda$ a \emph{compatible $\Gamma$-action}.  A 
\emph{$(\Gamma, G)$-bundle} is a $G$-bundle on $Y$ together with a compatible 
$\Gamma$-action.  If $E \to Y$ and $F \to Y$ are $(\Gamma, G)$-bundles, a 
\emph{morphism of $(\Gamma, G)$-bundles} $E \to F$ is a morphism of $G$-bundles which commutes with the $\Gamma$-action.  We will denote the stack of $(\Gamma, G)$-bundles by
\begin{align*}
\Bun_{\Gamma, G} Y.
\end{align*}
This is the fibred category whose fibre category over a $\C$-scheme $U$ is the groupoid of $(\Gamma, G)$-bundles over $Y \times U$, where $Y \times U$ has the $\Gamma$-action induced from $\lambda$.

\begin{rmk}
To give a compatible $\Gamma$-action on a $G$-bundle $E$ is equivalent to giving an isomorphism $\tau : p_Y^* E \arsim \lambda^* E$ of $G$-bundles over $\Gamma \times Y$ such that the following ``cocycle condition'' holds.  Consider the diagrams
\begin{align*} 
& \xymatrix{
\Gamma \times \Gamma \times Y \ar[r]^-{\1_\Gamma \times \lambda} \ar[dr]^L \ar[d]_{m \times \1_X} & \Gamma \times Y \ar[d]^\lambda \\
\Gamma \times Y \ar[r]_\lambda & Y, } & 
& \xymatrix{
\Gamma \times \Gamma \times Y \ar[r]^-{p_{\Gamma \times Y} } \ar[dr]^N \ar[d]_{\1_\Gamma \times \lambda} & \Gamma \times Y \ar[d]^\lambda \\
\Gamma \times Y \ar[r]_{p_Y} & Y, } & 
& \xymatrix{
\Gamma \times \Gamma \times Y \ar[r]^-{p_{\Gamma \times Y} } \ar[dr]^\Pr \ar[d]_{m \times \1_X} & \Gamma \times Y \ar[d]^{p_Y} \\
\Gamma \times Y \ar[r]_{p_Y} & Y. } 
\end{align*}
Then, modulo canonical isomorphisms, we get two isomorphisms $$(m \times \1_Y)^* 
\tau, (\1_\Gamma \times \lambda)^* \tau \circ p_{\Gamma \times Y}^* \tau\,:\, 
\Pr^* E \,\arsim \,L^* E\, .$$  The condition we require is that
\begin{align} \label{actioncocycle}
(m \times \1_Y)^* \tau = (\1_\Gamma \times \lambda)^* \tau \circ p_{\Gamma \times Y}^* \tau.
\end{align}
In what follows, we will more often talk about $(\Gamma, G)$-bundles in terms of this description.
\end{rmk}

Recall that the quotient stack $[ \Gamma \backslash Y ]$ is the fibred category whose objects over a $\C$-scheme $U$ are diagrams
\begin{align*}
\xymatrix{ M \ar[r] \ar[d] & Y \\ U, & }
\end{align*}
where the vertical arrow is a (left) principal $\Gamma$-bundle over $U$ and the horizontal arrow is a $\Gamma$-equivariant map, and whose morphisms over a morphism $U \to V$ of $\C$-schemes are diagrams
\begin{align*}
\xymatrix{ M \ar[r] \ar[d] & N \ar[r] \ar[d] & Y \\ U \ar[r] & V, & }
\end{align*}
where the square is Cartesian and the composition $M \to N \to Y$ is the same 
arrow as occurring the object over $U$.  There is a natural quotient morphism $\nu : Y \to [ \Gamma \backslash Y ]$ which takes a $\C$-morphism $f : U \to Y$ to 
\begin{align*}
\xymatrix{ \Gamma \times U \ar[r] \ar[d] & Y \\ U, & }
\end{align*}
where the horizontal map is $\lambda \circ (\1_\Gamma \times f)$.

With this, the diagram
\begin{align*}
\commsq{ \Gamma \times Y }{ Y }{ Y }{ [ \Gamma \backslash Y ] }{ \lambda }{ p_Y }{ \nu }{ \nu }
\end{align*}
is Cartesian, yielding an isomorphism 
\begin{align} \label{YGamma}
Y \times_{[ \Gamma \backslash Y ]} Y \cong \Gamma \times Y.
\end{align}
Hence we obtain isomorphisms
\begin{align} \label{triple}
Y \times_{[ \Gamma \backslash Y ]} Y \times_{[ \Gamma \backslash Y ]} Y \cong \Gamma \times Y \times_{[ \Gamma \backslash Y ]} Y \cong \Gamma \times \Gamma \times Y.
\end{align}

Let $\E$ be a $G$-bundle over $[ \Gamma \backslash Y ]$.  As $\nu : Y \to [ \Gamma \backslash Y ]$ is a smooth atlas for $[ \Gamma \backslash Y ]$, to it is associated a $G$-bundle $\E_\nu$.  The data of the bundle $\E$ and the diagram 
\begin{align*}
 \xymatrix{
Y \times_{[ \Gamma \backslash Y ]} Y \ar[r]^-{p_2} \ar[d]_{p_1} & Y \ar[d]^{\nu} \\
Y \ar[r]_{\nu} & [ \Gamma \backslash Y ] }
\end{align*}
yield isomorphisms $\beta_{p_i} : \E_{\nu \circ p_i} \arsim p_i^* \E_\nu$.  Since $\E_{\nu \circ p_1} \cong \E_{\nu \circ p_2}$, we then obtain an isomorphism $\sigma : p_1^* \E_\nu \arsim p_2^* \E_\nu$ of $G$-bundles over $Y \times_{[ \Gamma \backslash Y ]} Y$.  Now, if $p_{ij} : Y \times_{[ \Gamma \backslash Y ]} Y \times_{[ \Gamma \backslash Y ]} Y \to Y \times_{[ \Gamma \backslash Y ]} Y$ are the various projections, then the condition (\ref{compatibilitycondition}) implies that the cocycle condition
\begin{align} \label{quotientcocycle}
p_{13}^* \sigma = p_{23}^* \sigma \circ p_{12}^* \sigma.
\end{align}
holds

Under the isomorphism (\ref{YGamma}), $\sigma$ becomes an isomorphism $\tau : p_Y^* \E_\nu \arsim \lambda^* \E_\nu$ of $G$-bundles over $\Gamma \times Y$, and the condition (\ref{quotientcocycle}) above translates precisely into the condition (\ref{actioncocycle}), and hence $\E_\nu$ is a $(\Gamma, G)$-bundle over $Y$.

Conversely, let $E$ be a $(\Gamma, G)$-bundle on $Y$.  Let $\f : U \to [ \Gamma \backslash Y ]$ be any smooth atlas and consider the diagram
\begin{align*}
 \xymatrix{
U \times_{ [ \Gamma \backslash Y ]} Y \ar[r]^-{ q_\f } \ar[d]_{ \nu_\f } & Y \ar[d]^{\nu} \\
U \ar[r]_{ \f } & [ \Gamma \backslash Y ] }
\end{align*}

Then $\nu_\f : U \times_{ [ \Gamma \backslash Y ] } Y \to U$ is a $\Gamma$-torsor and $q_\f^* E$ is a $(\Gamma, G)$-bundle over $U \times_{[ \Gamma \backslash Y ]} Y$.  Thus $q_\f^*E$ descends to a $G$-bundle $\E_\f$ over $U$.  For a diagram (\ref{overXX}), the morphisms $\beta_k$ arise by considering the diagram
\begin{align*}
\xymatrix{
U \times_{[ \Gamma \backslash Y ]} Y \ar[r]^{\widetilde{k}} \ar[d]_{\nu_\f} & V 
\times_{[ \Gamma \backslash Y ]} Y \ar[r]^-{q_\g} \ar[d]_{\nu_\g} & Y \ar[d]^\nu \\
U \ar[r]_k & V \ar[r]_\g & [ \Gamma \backslash Y ], }
\end{align*}
and the uniqueness of the descended objects.  Therefore a $(\Gamma, G)$-bundle on $Y$ yields a $G$-bundle on $[ \Gamma \backslash Y ]$.

\begin{prop} \label{stackGamma}
There is an equivalence
\begin{align*}
\Bun_{\Gamma, G} Y \arsim \Bun_G [ \Gamma \backslash Y ].
\end{align*}
\end{prop}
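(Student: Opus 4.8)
The plan is to upgrade the object-level correspondence already sketched in the paragraphs preceding the statement into an equivalence of fibred categories, and to do so in a way that manifestly works in families. First I would set up the two functors. In one direction, given a $G$-bundle $\E$ over $[\Gamma\backslash Y]\times U\cong[\Gamma\backslash(Y\times U)]$ (using that quotient stacks commute with products in the base), pull back along the atlas $\nu\times\1_U : Y\times U\to[\Gamma\backslash Y]\times U$ to get $\E_{\nu\times\1_U}$, and equip it with the descent isomorphism $\sigma$ coming from the two projections on $(Y\times U)\times_{[\Gamma\backslash Y]\times U}(Y\times U)\cong\Gamma\times Y\times U$; as explained before the statement, the cocycle condition (\ref{compatibilitycondition}) for $\E$ translates into (\ref{actioncocycle}) for the resulting $\tau$, so this produces a $(\Gamma,G)$-bundle over $Y\times U$, i.e.\ an object of $(\Bun_{\Gamma,G}Y)(U)$. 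In the other direction, given a $(\Gamma,G)$-bundle $E$ over $Y\times U$, the construction already given — descend $q_\f^* E$ along the $\Gamma$-torsor $\nu_\f$ for each smooth atlas $\f : V\to[\Gamma\backslash Y]\times U$, and build the transition isomorphisms $\beta_k$ from the compatibility of descent — yields a $G$-bundle over $[\Gamma\backslash Y]\times U$. One checks both assignments are functorial in $U$ (pullback squares go to pullback squares), so they define morphisms of fibred categories over $\Sch/\C$.

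Next I would verify that these two functors are quasi-inverse. For the composite starting from $\E$: after pulling back to the atlas $\nu$ and then descending the resulting $(\Gamma,G)$-bundle along $\nu_\nu$, uniqueness of descent (fppf or smooth descent for the affine-group-scheme-torsor $G$-bundles, which is effective) identifies the result canonically with $\E$ itself, compatibly with the $\beta_k$'s. For the composite starting from $E$: descend $q_\nu^* E$ to $\E_\nu$ over $Y$ and then re-pull-back along $\nu$; by construction $\nu^*\E_\nu\cong E$ as $G$-bundles, and one must check this isomorphism respects the $\Gamma$-equivariant structures, which is exactly the statement that the $\tau$ recovered from $\sigma$ equals the original $\tau$ — again a consequence of the uniqueness clause in descent. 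Naturality of both unit and counit in $\E$ (resp.\ $E$) and in $U$ is then a formal diagram chase using the canonical isomorphisms $(l\circ k)^*\arsim k^*l^*$.

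The one genuinely substantive point — and the step I expect to be the main obstacle — is the \emph{effectivity of smooth descent} for principal $G$-bundles, i.e.\ that $q_\f^* E$ really descends along the $\Gamma$-torsor $\nu_\f$ to a $G$-bundle on the scheme $V$ (as opposed to merely an algebraic space or stack). Since $\Gamma$ is assumed to act on the scheme $Y$ and we are chiefly interested in the case $\Gamma$ finite, $\nu_\f$ is a finite (in particular affine) faithfully flat morphism, so fppf descent applies and the quotient exists as a scheme because a $G$-bundle is affine over its base; one should also note that $G$ being affine guarantees the descent datum is effective. I would state this carefully, citing descent along torsors (e.g.\ via \cite{LMB} or the Stacks Project), since in the general (non-finite) case one needs $\nu_\f$ smooth surjective plus the relative affineness of $G$-bundles, and the compatibility diagram (\ref{triple})/(\ref{YGamma}) identifying the double and triple fibre products with $\Gamma\times Y$ and $\Gamma\times\Gamma\times Y$ to match cocycle conditions. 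Once effectivity is in hand, everything else is the bookkeeping of transition isomorphisms and canonical $2$-isomorphisms, which I would carry out but not belabor.
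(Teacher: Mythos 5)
Your proposal is correct and follows essentially the same route as the paper: the paper's proof simply says to repeat the object-level correspondence sketched before the statement with $Y$ replaced by $Y\times U$, using the isomorphism $[\Gamma\backslash (Y\times U)]\cong[\Gamma\backslash Y]\times U$, which is exactly your setup. You supply more detail than the paper does on the quasi-inverse check and on the effectivity of descent for $G$-bundles along the $\Gamma$-torsors $\nu_\f$ (a point the paper's preceding discussion takes for granted when it asserts that $q_\f^*E$ descends), but this is elaboration rather than a different argument.
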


\begin{proof}
To see this, for each $\C$-scheme $U$, we need to repeat the above argument with $Y$ replaced by $Y \times U$ with the induced action.  One will need to use the fact that
\begin{align*}
[\Gamma \backslash Y \times U] \cong [\Gamma \backslash Y ] \times U,
\end{align*}
which is not hard to prove.
\end{proof}

\subsection{Associated Bundles} \label{associatedbundles}

For a principal $G$-bundle $E$ over a $\C$-scheme $X$, and a left $G$-action $\lambda : G \times F \to F$ on a $\C$-scheme $F$, we will denote the associated fibre bundle over $X$ with fibre $F$ by
\begin{align*}
E \times^\lambda F.
\end{align*}

\begin{lem} \label{associatedbundleiso}
Let $f : X \to Y$ be a morphism of $\C$-schemes, let $E$ be a principal 
$G$-bundle over $Y$, and let $F$ be a $\C$-scheme endowed with a (left) 
$G$-action $\lambda : G \times F \to F$.  Then there is a canonical isomorphism
\begin{align*}
\nu_f^{E, \lambda} : f^*E \times^\lambda F \arsim f^*( E \times^\lambda F)
\end{align*}
of schemes over $X$.
\end{lem}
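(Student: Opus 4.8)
The plan is to construct the isomorphism $\nu_f^{E,\lambda}$ via descent, using the fact that both sides are quotients of the same space by the $G$-action. Recall that $E \times^\lambda F$ is by definition the quotient $(E \times F)/G$, where $G$ acts by $g \cdot (e, x) = (eg^{-1}, \lambda(g, x))$, and this quotient is formed locally on $Y$ (over a trivializing cover $E$ becomes a product, and the quotient is taken fibrewise). First I would observe that there is a canonical isomorphism $f^*(E \times F) \cong f^*E \times F$ of schemes over $X$ (the pullback of a product is the product of the pullbacks, since $F$ is constant in the relevant sense), and that this isomorphism is $G$-equivariant for the diagonal-type actions described above, where on the left $G$ acts through its action on $E \times F$ pulled back along $f$, and on the right through the action on $f^*E \times F$.

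Next I would use the compatibility of quotients with flat (or more generally, arbitrary) base change in the presence of local triviality. Concretely, choose an open cover $\{V_i\}$ of $Y$ over which $E$ is trivial, so $E|_{V_i} \cong V_i \times G$ and $(E \times^\lambda F)|_{V_i} \cong V_i \times F$. Setting $U_i := f^{-1}(V_i)$, we get $f^*E|_{U_i} \cong U_i \times G$ and $f^*(E\times^\lambda F)|_{U_i} \cong U_i \times F$, while also $(f^*E \times^\lambda F)|_{U_i} \cong U_i \times F$ directly. On each $U_i$ the identification of these two copies of $U_i \times F$ is tautological; the content is that the transition functions match. Over $V_i \cap V_j$ the bundle $E$ has transition function $g_{ij} : V_i \cap V_j \to G$, and $E \times^\lambda F$ has transition function $\lambda(g_{ij}, -)$; pulling back along $f$ replaces $g_{ij}$ by $g_{ij} \circ f$, and the associated-bundle transition function of $f^*E$ is then $\lambda(g_{ij}\circ f, -)$, which is exactly $(f^*(E\times^\lambda F))$'s transition function $\lambda(g_{ij},-)\circ f$. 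Hence the local isomorphisms glue to the desired global isomorphism $\nu_f^{E,\lambda}$ over $X$.

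For canonicity, I would note that the construction does not depend on the choice of trivializing cover: given two covers, passing to a common refinement shows the glued isomorphisms agree, because on each piece the identification is the tautological one on $U_i \times F$. Alternatively, and perhaps more cleanly, one can phrase the whole argument functorially: $E \times^\lambda F$ represents the fppf sheafification of $U \mapsto (E(U) \times F(U))/G(U)$, and both $f^*E \times^\lambda F$ and $f^*(E\times^\lambda F)$ represent the sheafification of the pullback presheaf, so they are canonically isomorphic by the universal property, with the isomorphism compatible with the projections to $X$ by construction.

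The main obstacle I anticipate is not conceptual but bookkeeping: verifying that the quotient construction $(-)\times^\lambda F$ genuinely commutes with the base change $f$, which requires knowing that the quotient exists as a scheme and is stable under the relevant base change. Since $E$ is locally trivial in the Zariski (or at worst étale) topology and $F$ is a fixed $\C$-scheme, this reduces to the trivial case $E = Y \times G$, where $E \times^\lambda F = Y \times F$ and the claim is transparent; so the only real care needed is to check that the gluing isomorphisms respect the transition data, which is the computation sketched above. I would present this cover-and-glue argument as the proof, relegating the sheaf-theoretic reformulation to a remark if desired.
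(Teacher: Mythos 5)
Your proof is correct, but it takes a more hands-on route than the paper's. The paper argues entirely through the universal property of fibre products: starting from the Cartesian square $f^*E \to E$ over $X \to Y$, it forms the Cartesian square with $F$ multiplied into the top row, quotients the top row by $G$ to get a Cartesian square $f^*E \times^\lambda F \to E \times^\lambda F$ over $X \to Y$, and then observes that $f^*(E \times^\lambda F) \to E \times^\lambda F$ over $X \to Y$ is Cartesian by definition --- so both objects in question are fibre products of the same diagram and hence canonically isomorphic, with canonicity coming for free. You instead prove the statement by choosing a trivializing cover and checking that the transition cocycles of $f^*E \times^\lambda F$ and $f^*(E\times^\lambda F)$ agree, namely that $\lambda(g_{ij}\circ f, -) = \lambda(g_{ij}, -)\circ f$ on overlaps. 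The trade-off is that your version makes fully explicit the one step the paper only asserts (that quotienting the top row of a Cartesian square by $G$ again yields a Cartesian square, which ultimately rests on local triviality), at the cost of having to separately verify independence of the chosen cover; your closing functorial remark, that both sides represent the same sheaf, essentially recovers the paper's cleaner argument. One small caution: for general $G$ a principal bundle need only be trivialized \'etale-locally rather than Zariski-locally, so the cover-and-glue argument should be run in the \'etale topology, as you note parenthetically; with that understood, the two proofs establish the same canonical isomorphism.
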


\begin{proof}
By definition, the diagram
\begin{align*}
\commsq{ f^* P }{ P }{ X }{ Y }{}{}{}{}
\end{align*}
is Cartesian.  Hence, so are
\begin{align*}
& \vcenter{ \commsq{ f^* P \times F }{ P \times F }{ X }{ Y }{}{}{}{} } & \text{ and } & & \vcenter{ \commsq{ f^* P \times^\lambda F }{ P \times^\lambda F }{ X }{ Y, }{}{}{}{} }
\end{align*}
the latter obtained by taking the quotient of the top row of the diagram on the left by $G$.  Also, by definition,
\begin{align*}
\commsq{ f^* (P \times^\lambda F) }{ P \times^\lambda F }{ X }{ Y, }{}{}{}{}
\end{align*}
is Cartesian, so there is a canonical isomorphism between the corresponding 
upper-left corners.
\end{proof}

We now define for a smooth morphism $\f : U \to \XX$, where $U$ is a scheme,
\begin{align*}
(\E \times^\lambda F)_\f := \E_\f \times^\lambda F.
\end{align*}
Given a diagram (\ref{overXX}), the data for $\E$ comes with an isomorphism $\beta_k : \E_\f \to k^*\E_\g$, and hence there is an induced isomorphism 
\begin{align*}
\E_\f \times^\lambda F \xrightarrow{\overline{\beta}_k} k^*\E_\g \times^\lambda 
F.
\end{align*}
Composing this with the isomorphism $\nu_k^{\E_g, \lambda}$ provided by the Lemma, we obtain isomorphisms
\begin{align*}
\nu_k^{\E_g, \lambda} \circ \overline{\beta}_k : (\E \times^\lambda F)_\f \arsim 
k^*(\E \times^\lambda F)_\g.
\end{align*}
These will satisfy the relations (\ref{compatibilitycondition}) because the $\beta_k$ do and because of the canonical nature of the isomorphisms obtained in Lemma \ref{associatedbundleiso}.

As usual, we are mainly interested in this construction in the cases where $F = 
V$ is a representation of $G$, via $\rho : G \to {\rm GL}(V)$, say, in which case 
the 
associated bundle $\E \times^\rho V$ is a vector bundle, and where $F = H$ is another algebraic group on which $G$ acts via a homomorphism $\varphi : G \to H$ (and left multiplication), yielding a principal $H$-bundle $\E \times^\varphi H$.

\begin{cor}
If $V$ is a finite-dimensional vector space and $\rho : G \to {\rm GL}(V)$ a 
representation, then $\E \times^\rho V$ is a vector bundle in the sense of Section \ref{coherentsheaves}.  If $H$ is a complex algebraic group and $\varphi : G \to H$ a homomorphism, then $\E \times^\varphi H$ is a principal $H$-bundle in the sense of Section \ref{principalbundles}.
\end{cor}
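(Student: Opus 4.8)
The plan is to verify the two assertions of the corollary by checking, in each case, that the local data $(\E\times^\lambda F)_\f$ produced by the associated-bundle construction of the previous paragraph in fact lands in the relevant subcategory and that the transition isomorphisms $\nu_k^{\E_\g,\lambda}\circ\overline{\beta}_k$ are of the required kind. Since the compatibility condition (\ref{compatibilitycondition}) (equivalently (\ref{compatibilityconditionCS})) has already been established for this construction in general, the only thing left to observe in both cases is a purely local, scheme-theoretic statement about associated bundles over a single scheme $U$, together with the fact that the gluing isomorphism $\nu_k^{\E_\g,\lambda}$ of Lemma \ref{associatedbundleiso} respects the extra structure.

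First I would treat the vector bundle case. Fix a smooth atlas $\f:U\to\XX$. The bundle $\E_\f$ is an ordinary principal $G$-bundle over the scheme $U$, and it is classical that for a finite-dimensional representation $\rho:G\to\GL(V)$ the associated bundle $\E_\f\times^\rho V$ is a locally free $\OO_U$-module of rank $\dim V$: locally on $U$ where $\E_\f$ trivializes, $\E_\f\times^\rho V$ is identified with $V\otimes_\C\OO_U$, and the transition cocycle of $\E_\f$ composed with $\rho$ gives $\GL(V)$-valued transition functions. Thus $(\E\times^\rho V)_\f$ is locally free for every $\f\in\Ob\,\XX(U)$, which is exactly the condition in Section \ref{coherentsheaves} for $\E\times^\rho V$ to be a vector bundle; the sheaf-isomorphism data (\ref{sheafiso}) and its compatibility (\ref{compatibilityconditionCS}) are precisely the $\nu_k^{\E_\g,\rho}\circ\overline{\beta}_k$ already constructed, which are $\OO_U$-linear because they are built from the $G$-equivariant (hence, after passing to the associated bundle, $\OO$-linear) maps $\beta_k$ and the canonical base-change isomorphism, which is $\OO$-linear.

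Next I would treat the principal $H$-bundle case, for a homomorphism $\varphi:G\to H$. Again over a fixed atlas $\f:U\to\XX$, it is standard that $\E_\f\times^\varphi H$, with $H$ acting by right multiplication on the second factor, is a principal $H$-bundle over $U$ (local triviality is inherited from that of $\E_\f$, and the $H$-action is free with quotient $U$). So $(\E\times^\varphi H)_\f$ is a principal $H$-bundle for each $\f$. To conclude that $\E\times^\varphi H$ is a principal $H$-bundle on $\XX$ in the sense of Section \ref{principalbundles}, one needs the transition maps $\nu_k^{\E_\g,\varphi}\circ\overline{\beta}_k$ to be isomorphisms of principal $H$-bundles, i.e. $H$-equivariant: this holds because $\overline{\beta}_k$ is induced from the $G$-bundle isomorphism $\beta_k$ and hence commutes with right multiplication by $H$ on the $H$-factor, while the Lemma \ref{associatedbundleiso} isomorphism $\nu_k^{\E_\g,\varphi}$ is $H$-equivariant since it comes from a Cartesian square obtained by quotienting by $G$ and is natural. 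Finally, the cocycle identity (\ref{compatibilitycondition}) for these maps is exactly what the last sentence of the preceding paragraph already asserts.

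The only mild obstacle is bookkeeping: one must check that the canonical isomorphism $f^*E\times^\lambda F\arsim f^*(E\times^\lambda F)$ of Lemma \ref{associatedbundleiso} is compatible with, respectively, the $\OO$-module structure (when $F=V$) and the right $H$-action (when $F=H$). Both follow because that isomorphism is the comparison of two Cartesian squares and the extra structure ($\OO$-linearity, $H$-action) is defined by maps that are themselves compatible with base change; there is no genuine difficulty, only the need to trace through the definitions. Hence the corollary follows immediately from Lemma \ref{associatedbundleiso} and the construction of $\E\times^\lambda F$ given above.
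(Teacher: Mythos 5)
Your proof is correct and follows essentially the same route as the paper, which treats the corollary as an immediate consequence of the associated-bundle construction and Lemma \ref{associatedbundleiso}: the local pieces $\E_\f\times^\rho V$ and $\E_\f\times^\varphi H$ are classically a vector bundle and a principal $H$-bundle over the scheme $U$, and the transition isomorphisms already built in the preceding paragraph respect the extra structure by their canonical nature. The paper in fact offers no separate argument beyond this, so your more explicit verification of $\OO$-linearity and $H$-equivariance is just a fuller writing-out of the same proof.
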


In particular, the adjoint bundle
\begin{align*}
\ad \, \E := \E \times^\ad \Lie(G).
\end{align*}
arising from the adjoint representation $\ad : G \to {\rm GL}(\Lie(G))$ of $G$ on 
its Lie algebra $\Lie(G)$ is well-defined.

\begin{rmk} \label{associatedbundleBG}
A homomorphism of algebraic groups $\varphi : G \to H$ induces a $1$-morphism of algebraic stacks $B\varphi : BG \to BH$, taking a principal $G$-bundle over $U$ (an object of $BG$) to the associated $H$-bundle.  One then sees that if the principal $G$-bundle $\E$ on $\XX$ corresponds to the morphism $E : \XX \to BG$ (via the equivalence of Lemma \ref{GbundleBG}), then the $H$-bundle $\E \times^\varphi H$ corresponds to $B \varphi \circ E$.
\end{rmk}

\subsection{Reduction of Structure Group}

Fix a principal $G$-bundle $\E$ over $\XX$ and let $H \subseteq G$ be a closed subgroup of $G$.  Then for each $\f \in \Ob \, \XX(U)$ and each diagram (\ref{overXX}), the isomorphism $\beta_k$ induces another one 
\begin{align*}
\E_\f /H \arsim k^* \E_\g/H = k^*( \E_\g/H),
\end{align*}
which we will denote by $\beta_{k,H}$.  Then a \emph{reduction $\tau$ of the structure group to $H$} consists of the data of a section $\tau_\f : U \to \E_\f/H$ for each $\f \in \Ob \, \XX(U)$ such that for each diagram (\ref{overXX}), one has
\begin{align} \label{reductioncompatibility}
\overline{\beta}_{k,H} \circ \tau_\f = k^* \tau_\g.
\end{align}

\begin{lem} \label{reductionequivalence}
Let $\E$ be a principal $G$-bundle over $\XX$.  The following are equivalent pieces of information:
\begin{enumerate}
\item[(a)] a reduction of structure group $\tau$ to $H$;
\item[(b)] a principal $H$-bundle $\F$ and an isomorphism
    \begin{align*}
    \phi : \E \cong \F \times^\iota G,
    \end{align*}
    where $\iota : H \to G$ is the inclusion map;
\item[(c)] a factorization 
    \begin{align*}
    \commtri{ \XX }{ BH }{ BG, }{ F }{ E }{ B\iota}
    \end{align*}
    where $E$ is the morphism of stacks corresponding to $\E$.
\end{enumerate}
\end{lem}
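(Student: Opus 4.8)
The plan is to prove the equivalence by establishing the chain of implications $(a) \Leftrightarrow (b)$ and $(b) \Leftrightarrow (c)$, where the second equivalence is essentially a restatement of Lemma \ref{GbundleBG} together with Remark \ref{associatedbundleBG}. The whole argument should be ``atlas-local'': since all of the structures in question are defined by specifying compatible data over each smooth atlas $\f : U \to \XX$, one can invoke the corresponding classical statement for $G$-bundles over schemes and then check that the isomorphisms assemble compatibly across diagrams (\ref{overXX}). The key technical input is the canonical isomorphism $\nu_f^{E,\lambda}$ of Lemma \ref{associatedbundleiso} applied with $\lambda$ the left-multiplication action of $G$ on itself restricted from $H$, which is precisely what makes the associated-bundle construction $\F \mapsto \F \times^\iota G$ commute with pullback.

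\medskip
\noindent\textbf{$(a) \Rightarrow (b)$.} Given a reduction $\tau$, for each atlas $\f : U \to \XX$ the section $\tau_\f : U \to \E_\f/H$ determines, by the classical correspondence over schemes, a principal $H$-subbundle $\F_\f \subseteq \E_\f$ together with an isomorphism $\phi_\f : \F_\f \times^\iota G \arsim \E_\f$. One then defines the $G$-bundle isomorphisms $\beta_k^\F$ so that they are compatible with the $\beta_k^\E$ under $\phi$: concretely, $\beta_k^\F$ is the restriction to $\F_\f$ of $\beta_k^\E$, which lands in $k^*\F_\g$ precisely because of the compatibility (\ref{reductioncompatibility}) of the sections $\tau_\f$ with $\overline{\beta}_{k,H}$. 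That the $\beta_k^\F$ satisfy the cocycle condition (\ref{compatibilitycondition}) follows from the fact that the $\beta_k^\E$ do, and that $\phi$ is a morphism of $G$-bundles (i.e., the square in the definition of a morphism commutes) follows because $\phi_\f$ is, by construction, $G$-equivariant for each $\f$ and the identification $(\E\times^\iota G)_\f \cong \E_\f \times^\iota G$ is the canonical one of Lemma \ref{associatedbundleiso}. Conversely, given $(\F,\phi)$ as in (b), the section $\tau_\f$ is recovered as the one classifying the $H$-reduction $\phi_\f(\F_\f \times \{e\}) \subseteq \E_\f$, and (\ref{reductioncompatibility}) is forced by the compatibility of $\phi$ with the $\beta_k$.

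\medskip
\noindent\textbf{$(b) \Leftrightarrow (c)$.} By Lemma \ref{GbundleBG}, the $H$-bundle $\F$ corresponds to a morphism $F : \XX \to BH$, and by Remark \ref{associatedbundleBG} the $G$-bundle $\F \times^\iota G$ corresponds to $B\iota \circ F$. Thus an isomorphism $\phi : \E \cong \F \times^\iota G$ of $G$-bundles corresponds, again by Lemma \ref{GbundleBG}, to a $2$-isomorphism $E \cong B\iota \circ F$, which is exactly the data of the $2$-commutative triangle in (c). Running this backwards, a factorization $E \cong B\iota \circ F$ produces $\F$ from $F$ via Lemma \ref{GbundleBG} and the isomorphism $\phi$ from the $2$-isomorphism. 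Strictly speaking one should note that the $2$-commutativity datum is part of (c), so that (b) and (c) really do carry the same information.

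\medskip
\noindent\textbf{Main obstacle.} The only substantive point — and the one I would be most careful about — is verifying in $(a)\Rightarrow(b)$ that the naively-defined transition isomorphisms $\beta_k^\F$ actually \emph{restrict} correctly, i.e., that $\beta_k^\E$ carries the sub-$H$-bundle $\F_\f \subseteq \E_\f$ into $k^*\F_\g \subseteq k^*\E_\g$. This is where condition (\ref{reductioncompatibility}) is used in an essential way: the subbundle $\F_\f$ is the preimage under $\E_\f \to \E_\f/H$ of the image of $\tau_\f$, and compatibility of $\tau$ says exactly that $\overline{\beta}_{k,H}$ matches up these images. Everything else is a matter of transporting classical facts about reductions of structure group over a scheme through the canonical isomorphisms of Lemma \ref{associatedbundleiso} and checking cocycle conditions, which is routine given the ``canonical nature'' observations already invoked in Section \ref{associatedbundles}.
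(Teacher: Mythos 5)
Your proposal is correct and follows essentially the same route as the paper: the paper also constructs $\F_\f$ as the pullback $\E_\f \times_{\E_\f/H} U$ (i.e.\ the preimage of $\tau_\f$'s image, exactly your sub-$H$-bundle), obtains the $\beta_k^\F$ from the $\beta_k^\E$ together with (\ref{reductioncompatibility}), and disposes of (b)$\Leftrightarrow$(c) via Remark \ref{associatedbundleBG}. The point you flag as the main obstacle is precisely the one the paper addresses by comparing the two fibre products $\F_\f$ and $k^*\F_\g$.
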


\begin{proof}
Suppose we are given a reduction $\tau$ of $\E$ to $H$.  For $\f \in \Ob \, 
\XX(U)$, we consider the fibre product $\F_\f := \E_\f \times_{\E_\f/H} U$ which 
fits into a Cartesian diagram
\begin{align} \label{reduction}
\vcenter{
\xymatrix{
 \F_\f \ar[r] \ar[d] & \E_\f \ar[d] \\
U \ar[r]_-{\tau_\f} & \E_\f/H. }
}
\end{align}
Then $\F_f$ is an $H$-bundle over $U$ with the property that $\F_f \times^\iota G \cong \E_\f$.  To see that we get isomorphisms $\beta_k^\F : \F_\f \to k^* \F_\g$, we take the diagram (\ref{reduction}) for $\g \in \Ob \, \XX(V)$ and pull it back via $k$.  Then using the $\beta_k^\E$ and (\ref{reductioncompatibility}), we observe that $\F_\f$ and $k^* \F_\g$ give isomorphic fibre products, so we may construct the $\beta_k^\F$.  The compatibility condition (\ref{compatibilitycondition}) will come from that of the $\beta_k^\E$.

Conversely, suppose there exists an $H$-bundle $\F$ as in (b).  Now, noting that $\F_\f \to \E_\f$ is an $H$-equivariant morphism and $\F_\f/H \cong U$, we get sections $\tau_\f : U \to \E_\f/H$.  To see that we have (\ref{reductioncompatibility}), we use the diagram
\begin{align*}
\commsq{ \F_\f }{ \E_\f }{ k^* \F_\g }{ k^* \E_\g. }{}{ \beta_k^\F }{ \beta_k^\E }{}
\end{align*}
This shows that (a) and (b) are equivalent.

The equivalence of (b) and (c) is clear from Remark \ref{associatedbundleBG}.
\end{proof}

\subsection{Connections}

Let $\E$ be a principal bundle over a Deligne--Mumford stack $\XX$.  Then a 
\emph{connection} $\nabla$ on $\E$ consists of the data of a connection 
$\nabla_\f$ on each $\E_\f$ where $\f : U \to \XX$ is an \'etale atlas which 
pulls back properly with respect to diagrams (\ref{overXX}). 
To be precise, suppose we realize the connections in 
terms of $\Lie(G)$-valued 
$1$-forms so that $\omega_\f \in H^0(U, \Omega_{U/\C}^1 \otimes \Lie(G))$ is the connection $1$-form corresponding to $\nabla_\f$.  From the Cartesian diagram
\begin{align*}
\commsq{ k^* \E_\g }{ \E_\g }{ U }{ V }{ \overline{k} }{}{}{ k },
\end{align*}
we obtain a connection $\overline{k}^* \omega_\g$ on $k^* \E_\g$.  Then the 
condition that we want is
\begin{align} \label{connectioncompatibilitycondition}
\omega_\f = \beta_k^* \overline{k}^* \omega_\g.
\end{align}

One obtains the following simply because induced connections behave well with respect to pullbacks.

\begin{lem} \label{inducedconnection}
Let $\E$ be a principal $G$-bundle admitting a connection $\nabla$. 
\begin{enumerate}
\item[(a)] If $\rho : G \to {\rm GL}(V)$ is a representation, then there is an 
induced 
connection $\nabla^\rho$ on the associated vector bundle $\E \times^\rho V$.
\item[(b)] If $\varphi : G \to H$ is a homomorphism of algebraic groups, then there is an induced connection $\nabla^\varphi$ on the associated principal bundle $\E \times^\varphi H$.
\end{enumerate} 
\end{lem}

The following statement is justified in the course of the proof of 
\cite[Proposition 2.3]{AzadBiswas2002}.

\begin{lem} \label{Levisplitting}
Let $G$ be a reductive algebraic group.  Let $L \subseteq G$ be the Levi factor 
of a parabolic subgroup of $G$.  Then there is an $L$-equivariant splitting 
$\psi : \Lie(G) \to \Lie(L)$.
\end{lem}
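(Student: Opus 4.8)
The plan is to exhibit $\psi$ as an $L$-equivariant linear retraction of the inclusion $\iota : \l \hookrightarrow \g := \Lie(G)$, where $L$ acts on both sides by the adjoint representation; concretely I need an $\Ad(L)$-invariant complement $\m$ to $\l$ in $\g$, and then $\psi$ is the projection $\g = \l \oplus \m \to \l$. The quickest argument uses that a Levi factor of a parabolic subgroup of a reductive group is itself reductive: since we are over $\C$, the category of finite-dimensional algebraic representations of $L$ is semisimple, so the $L$-submodule $\l \subseteq \g$ admits an $\Ad(L)$-invariant complement, and the associated projection is $L$-equivariant and restricts to the identity on $\l$.

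It is worth making the complement explicit, as this is the form in which the splitting is used later. Let $P \supseteq L$ be the parabolic subgroup of which $L$ is a Levi factor, with unipotent radical $U$, and let $P^-$ be the opposite parabolic (the unique parabolic with $P \cap P^- = L$), with unipotent radical $U^-$. Choosing a maximal torus $T \subseteq L$ with Lie algebra $\t$ and root system $\Phi = \Phi(G,T)$, the roots of $L$ form a closed symmetric subsystem $\Phi_L \subseteq \Phi$, and $\Phi \setminus \Phi_L$ is the disjoint union of the set $\Phi_U$ of roots occurring in $\u$ and $\Phi_{U^-} = -\Phi_U$. Hence one has the vector space decomposition
\begin{align*}
\g = \u^- \oplus \l \oplus \u, \qquad \l = \t \oplus \bigoplus_{\alpha \in \Phi_L} \g_\alpha, \qquad \u = \bigoplus_{\alpha \in \Phi_U} \g_\alpha, \qquad \u^- = \bigoplus_{\alpha \in \Phi_U} \g_{-\alpha},
\end{align*}
and since $L$ normalizes each of $U^-$, $L$ and $U$, conjugation by $L$ preserves each of the three summands, so this is a decomposition of $L$-modules under the adjoint action. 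Taking $\m := \u^- \oplus \u$ and letting $\psi$ be the projection onto the middle factor then gives the desired $L$-equivariant splitting. (Alternatively, one could take $\psi$ to be orthogonal projection with respect to an $\Ad(G)$-invariant nondegenerate symmetric bilinear form on $\g$, which restricts to a nondegenerate form on $\l$, so that $\l^\perp$ is an $\Ad(L)$-stable complement.)

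There is no real obstacle here. The only inputs are the structure theory of parabolic subgroups — that $L$ normalizes $P$, $P^-$ and hence $U$, $U^-$, which yields the $\Ad(L)$-stable triangular decomposition above, equivalently that $L$ is reductive — and these are entirely standard. Everything else is the trivial observation that the projection along an invariant complement is an equivariant retraction of the inclusion.
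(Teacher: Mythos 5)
Your argument is correct, and it is essentially the one the paper relies on: the paper offers no proof of its own but defers to \cite[Proposition 2.3]{AzadBiswas2002}, where the splitting is likewise obtained as the $\Ad(L)$-equivariant projection onto $\l$ coming from the $L$-stable triangular decomposition $\g = \u^- \oplus \l \oplus \u$. Nothing is missing.
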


\begin{lem} \label{splittingconnection}
Let $L \subseteq G$ be any closed subgroup.  Assume that there exists an $H$-equivariant (for the adjoint action) splitting $\psi : \Lie(G) \to \Lie(H)$ (of the inclusion map $\Lie(H) \hookrightarrow \Lie(G)$).  If the $G$-bundle $\E$ admits a connection and a reduction to $H$, then the resulting $H$-bundle (as given in Lemma \ref{reductionequivalence}) admits a connection. 
\end{lem}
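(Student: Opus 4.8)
The plan is to carry out, one \'etale atlas at a time, the classical construction producing a connection on a reduction out of a connection on $\E$ together with an equivariant splitting (as in the proof of \cite[Proposition 2.3]{AzadBiswas2002}), and then to verify that the connections so obtained on the bundles $\F_\f$ fulfil the compatibility condition (\ref{connectioncompatibilitycondition}), so that they assemble into a connection on the $H$-bundle $\F$ furnished by Lemma \ref{reductionequivalence}.

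First I would fix an \'etale atlas $\f : U \to \XX$. By Lemma \ref{reductionequivalence} the reduction $\tau$ gives a principal $H$-bundle $\F_\f$ over $U$ together with an $H$-equivariant map $j_\f : \F_\f \to \E_\f$ exhibiting $\F_\f$ as a reduction of the structure group inside $\E_\f$. Writing $\omega_\f$ for the $\Lie(G)$-valued connection $1$-form of $\nabla_\f$ on the total space of $\E_\f$, I would set $\omega_\f^\psi := \psi \circ j_\f^* \omega_\f$. That this $\Lie(H)$-valued $1$-form is a connection form on $\F_\f$ is then exactly the standard computation in the scheme case: on the fundamental vector field attached to $A \in \Lie(H)$ one has $j_\f^* \omega_\f = A$ (viewed in $\Lie(G)$), and $\psi(A) = A$ since $\psi$ splits the inclusion $\Lie(H) \hookrightarrow \Lie(G)$; while $R_h^* \omega_\f^\psi = \psi \circ j_\f^* (\Ad(h^{-1}) \omega_\f) = \Ad(h^{-1}) \omega_\f^\psi$ for $h \in H$, using the $H$-equivariance of $\psi$. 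This produces a connection $\nabla_\f^\psi$ on $\F_\f$; when $H = L$ is a Levi factor of a parabolic subgroup, Lemma \ref{Levisplitting} supplies the required $\psi$.

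Next, for a diagram (\ref{overXX}) with $\f, \g$, and hence $k$, \'etale, I would invoke the construction of $\beta_k^\F$ in Lemma \ref{reductionequivalence} --- equivalently, the reduction-compatibility (\ref{reductioncompatibility}) --- which says that $\beta_k^\F : \F_\f \arsim k^* \F_\g$ intertwines the inclusions of the two reductions, i.e.\ $\beta_k^\E \circ j_\f = (k^* j_\g) \circ \beta_k^\F$. Combining this with the evident base-change identity relating $j_\g$ and its pullback, with the compatibility (\ref{connectioncompatibilitycondition}) for $\nabla$ itself, and with the fact that the fixed linear map $\psi$ commutes with all pullbacks of forms, one obtains
\begin{align*}
\omega_\f^\psi \;=\; \psi \circ j_\f^* (\beta_k^\E)^* \overline{k}^* \omega_\g \;=\; (\beta_k^\F)^* \overline{k}^* \big( \psi \circ j_\g^* \omega_\g \big) \;=\; (\beta_k^\F)^* \overline{k}^* \omega_\g^\psi,
\end{align*}
which is (\ref{connectioncompatibilitycondition}) for the family $\{ \nabla_\f^\psi \}$; hence these assemble into a connection on $\F$.

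I expect the only real difficulty to be the bookkeeping in this last step: one must keep track of how the immersions $\F_\f \hookrightarrow \E_\f$ vary with the atlas, and the content of that is precisely the reduction-compatibility (\ref{reductioncompatibility}). So no substantive obstacle should arise --- the mathematics is entirely local and classical, and the compatibility over $\XX$ is a formal consequence of the naturality of the construction.
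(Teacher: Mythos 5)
Your proposal is correct and follows essentially the same route as the paper: define $\nu_\f = \psi \circ j_\f^* \omega_\f$ on each atlas (as in Lemma 2.2 of \cite{AzadBiswas2002}) and verify the compatibility condition (\ref{connectioncompatibilitycondition}) by chasing the diagram relating $j_\f$, $j_\g$, $\beta_k^\F$ and $\beta_k^\E$. Your write-up merely makes explicit the verifications that the paper leaves to the cited reference and to the diagram.
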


\begin{proof}
This is the analogue of Lemma 2.2 of \cite{AzadBiswas2002}.  Let $\F$ be the $H$-bundle arising from the reduction in structure group and for $\f \in \Ob \, \XX$ let $j_\f : \F_\f \to \E_\f$ be the inclusion morphism.  Given a connection form $\omega_\f$ on $\E_\f$, the corresponding connection form on $\F_\f$ is given in the quoted result by
\begin{align*}
\nu_\f := \psi \circ j_\f^* \omega_\f.
\end{align*}
The compatibility condition (\ref{connectioncompatibilitycondition}) can be obtained by tracing through the diagram
\begin{align*}
\xymatrix{
\F_\f \ar[r]^{\beta^\F} \ar[d]_{j_\f} & k^* \F_\g \ar[r]^-{\overline{k}_\F} 
\ar[d]^{k^* j_\g} & \F_\g \ar[d]^{j_\g} \\
\E_\f \ar[r]^{\beta_k^\E} \ar[dr] & k^* \E_\g \ar[d] \ar[r]_-{\overline{k}_\E} & 
\E_g \ar[d] \\
& U \ar[r] & V. }
\end{align*}
\end{proof}

\subsection{The Atiyah Sequence}

Let $Y$ be a smooth $\C$-scheme (locally) of finite type and let $\pi : E \to Y$ be a principal $G$-bundle. One has an exact $G$-equivariant sequence of vector bundles
\begin{align*}
0 \to E \times \g \to TE \to \pi^* TY \to 0
\end{align*}
and the Atiyah sequence can be obtained by quotienting by the $G$-action:
\begin{align} \label{Atiyahsequence}
0 \to \ad \, E \to \At \, E \to TY \to 0.
\end{align}

\begin{lem} \label{Atiyah}
Let $f : X \to Y$ be an \'etale morphism of smooth $\C$-schemes (locally) of finite type and let $\pi : E \to Y$ be a principal $G$-bundle.  Then there is a canonical isomorphism
\begin{align*}
\gamma_f^E : \At \, f^* E \arsim f^* \At \, E
\end{align*}
fitting into a commutative diagram
\begin{align} \label{Atiyahsequencepullback}
\vcenter{
\xymatrix{
0 \ar[r] & \ad \, f^*E \ar[r] \ar[d]^{\nu_f^\ad} & \At \, f^* E \ar[r] \ar[d]^{\gamma_f^E} & TU \ar[r] \ar[d] & 0 \\
0 \ar[r] & f^* \ad \, E \ar[r] & f^* \At \, E \ar[r] & f^* TV \ar[r] & 0, } }
\end{align}
where $\nu_f^\ad$ is the isomorphism given in Lemma \ref{associatedbundleiso}, 
noting that $\ad \, E$ is the bundle associated to the adjoint representation 
$\ad : G \to {\rm GL}(\g)$.
\end{lem}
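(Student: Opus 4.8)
The plan is to construct the isomorphism $\gamma_f^E$ directly from the defining $G$-equivariant sequence on total spaces, then descend.  First I would recall that since $f$ is \'etale, the derivative $df : TX \to f^* TY$ is an isomorphism, and more relevantly, for the pullback $G$-bundle one has a Cartesian square
\begin{align*}
\commsq{ f^* E }{ E }{ X }{ Y }{ \overline{f} }{}{}{ f }
\end{align*}
in which $\overline{f}$ is itself \'etale (being a base change of the \'etale map $f$).  Hence $d\overline{f} : T(f^*E) \arsim \overline{f}^* TE$ is an isomorphism of $G$-equivariant vector bundles on $f^*E$.  Moreover the trivial (vertical) subbundle $f^*E \times \g$ is carried isomorphically onto $\overline{f}^*(E \times \g)$ by this map, compatibly with the canonical isomorphism $\nu_{\overline f}$ of Lemma \ref{associatedbundleiso} applied to the adjoint action; and the quotient $\pi_{f^*E}^* TX$ maps isomorphically onto $\overline{f}^* \pi_E^* TY \cong \pi_{f^*E}^* f^* TY$ via $df$.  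This gives a commutative ladder of short exact $G$-equivariant sequences on $f^*E$ comparing $0 \to f^*E \times \g \to T(f^*E) \to \pi^* TX \to 0$ with $\overline{f}^*$ of the analogous sequence on $E$.

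Next I would quotient everything by $G$.  Taking $G$-invariants of the ladder on total spaces, the middle term $T(f^*E)/G$ is by definition $\At\, f^*E$, and $\overline{f}^* TE / G$ is canonically $f^*(\At\, E)$ — this last identification uses that the diagram is Cartesian and that forming $\At$ commutes with the \'etale base change, which is exactly the content one needs to check, essentially because $\overline{f}$ is $G$-equivariant and the square $f^*E \to E$, $X \to Y$ is a pullback, so $(\overline{f}^* TE)/G \cong f^*(TE/G)$.  Similarly the left-hand isomorphism descends to $\nu_f^\ad : \ad\, f^*E \arsim f^* \ad\, E$ (this is precisely Lemma \ref{associatedbundleiso} for the adjoint action), and the right-hand isomorphism descends to the canonical $TX \arsim f^* TY$ coming from $f$ being \'etale.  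The resulting $\gamma_f^E : \At\, f^*E \arsim f^*\At\, E$ then automatically fits into the commutative diagram (\ref{Atiyahsequencepullback}) because it was obtained as the middle arrow of a morphism of short exact sequences.

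The main obstacle, and the only genuinely substantive point, is the identification $\overline{f}^*(TE)/G \cong f^*(\At\,E) = f^*(TE/G)$ — i.e.\ that pullback along the \'etale $\overline f$ commutes with taking the quotient by the (free, proper) $G$-action, and does so compatibly on subbundles and quotients.  This follows from faithfully flat descent together with the observation that $f^*E \to X$ and $E \to Y$ form a $G$-equivariant Cartesian square, so any $G$-equivariant sheaf on $E$ pulls back to a $G$-equivariant sheaf on $f^*E$ whose descent to $X$ is the pullback along $f$ of the descent to $Y$; applying this to $TE$ with its canonical $G$-linearization gives the claim.  Everything else (commutativity of the squares, the fact that $\gamma_f^E$ is canonical hence compatible with further compositions, which is what makes it usable in assembling $\At\,\E$ over a Deligne--Mumford stack) reduces to the naturality of $d\overline f$, of $\nu_{\overline f}$, and of the descent isomorphisms, and so requires no real computation.
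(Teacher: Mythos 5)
Your proposal is correct and follows essentially the same route as the paper: both exploit that $\overline{f}$ is \'etale to identify $T(f^*E)$ with $\overline{f}^*TE$ via the relative cotangent sequence, then pass to $G$-quotients (the paper phrases your descent step as ``quotienting the top row of the Cartesian square by $G$''), and both reduce the commutativity of the two squares to the canonical/natural character of the maps involved. The only difference is organizational — you descend the whole ladder of equivariant exact sequences at once, while the paper constructs $\gamma_f^E$ first and checks the two squares separately — which is not a substantive divergence.
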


Observe that the bottom row, being the pullback of an exact sequence by a flat map, is exact.

\begin{proof}
We begin with the Cartesian diagram
\begin{align*}
\commsq{ f^* E }{ E }{ X }{ Y }{ \overline{f} }{ \overline{\pi} }{ \pi }{ f }
\end{align*}
and observe that since $\overline{f}$ is obtained from $f$ by base change it is 
\'etale.  Now, we have a canonical exact sequence
\begin{align*}
0 \to \overline{f}^* \Omega_{E/\C}^1 \to \Omega_{f^*E/\C}^1 \to \Omega_{f^*E/E}^1 
\to 0
\end{align*}
whose last term vanishes since $\overline{f}$ is \'etale.  Thus, we have a 
canonical 
isomorphism $T(f^*E) \arsim f^* TE$, and hence a Cartesian square
\begin{align*}
\commsq{ T f^*E }{ TE }{ X }{ Y.}{}{}{}{}
\end{align*}
Quotienting by the action of $G$ on the top row yields another Cartesian square
\begin{align*}
\commsq{ \At \, f^*E }{ \At \, E }{ X }{ Y,}{}{}{}{}
\end{align*}
whence the canonical isomorphism $\gamma_f^E : \At \, f^*E \arsim f^* \At \, E$.

To see that the first square in (\ref{Atiyahsequencepullback}) commutes, we first note that the proof of Lemma \ref{associatedbundleiso} actually shows that $\ad \, f^*E$ and $f^* \ad \, E$ are both canonically isomorphic to $\ad \, E \times_Y X$ and the argument above similarly shows that $\At \, f^*E$ and $f^* \At \, E$ are both canonically isomorphic to $\At \, E \times_Y X$.  Now, modulo canonical isomorphisms, the horizontal maps are essentially the base change map $\ad \, E \times_Y X \to \At \, E \times_Y X$, and so the first square must commute.  The second square commutes since it is map of cokernels and because of the canonical nature of the morphism $TU \to f^* TV$.
\end{proof}

We will now assume that $\XX$ is a Deligne--Mumford stack and let $\E$ be a principal $G$-bundle over $\XX$.  Given an \'etale map $\f : U \to \XX$, we may define
\begin{align*}
(\At \, \E)_\f := \At \, \E_\f.
\end{align*}
For a diagram (\ref{overXX}) in which $\f, \g$ and hence $k$ are \'etale, we can compose the isomorphism $\At \, \E_\f \to \At \, k^* \E_\g$ with the isomorphism $\At \, k^* \E_\g \to k^* \At \, \E_\g$ obtained in Lemma \ref{Atiyah} to get isomorphisms
\begin{align*}
(\At \, \E)_\f \arsim k^* (\At \, \E)_\g.
\end{align*}
Arguing as in Section \ref{associatedbundles}, these will satisfy the condition in (\ref{compatibilityconditionCS}) and hence yield a well-defined vector bundle on $\XX$.  In fact, since the diagram
\begin{align*}
\xymatrix{
0 \ar[r] & \ad \, \E_\f \ar[r] \ar[d] & \At \, \E_\f \ar[r] \ar[d] & TU \ar[d] \ar[r] & 0 \\
0 \ar[r] & k^* \ad \, \E_\g \ar[r] & k^* \At \, \E_\g \ar[r] & k^*TV \ar[r] & 0 }
\end{align*}
commutes, the sequence
\begin{align*}
0 \to \ad \, \E \to \At \, \E \to T \XX \to 0
\end{align*}
is well-defined as an exact sequence of vector bundles on $\XX$.  We will call it the \emph{Atiyah sequence associated to $\E$}.

\begin{lem} \label{Atiyahcondition}
A principal bundle $\E$ over a Deligne--Mumford stack $\XX$ admits a connection if and only if its Atiyah sequence splits.
\end{lem}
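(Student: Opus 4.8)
The plan is to reduce the statement to the corresponding fact for schemes via \'etale atlases, exactly in the spirit of how the Atiyah sequence on $\XX$ was built. Recall that for a principal $G$-bundle $\pi : E \to Y$ over a smooth $\C$-scheme $Y$, giving a connection is equivalent to giving a $G$-equivariant splitting of $0 \to E \times \g \to TE \to \pi^* TY \to 0$, which upon taking $G$-quotients is equivalent to an $\OO_Y$-linear splitting $s : TY \to \At\, E$ of the Atiyah sequence $0 \to \ad\, E \to \At\, E \to TY \to 0$; this is the classical result of Atiyah. So the content is to upgrade this scheme-level equivalence to $\XX$ compatibly with the gluing data.

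First I would spell out the forward direction. Suppose $\E$ admits a connection $\nabla$, given by connection forms $\omega_\f \in H^0(U, \Omega^1_{U/\C} \otimes \g)$ for each \'etale atlas $\f : U \to \XX$, satisfying the compatibility (\ref{connectioncompatibilitycondition}). Each $\omega_\f$ determines a splitting $s_\f : TU \to \At\, \E_\f$ of the Atiyah sequence of $\E_\f$. For a diagram (\ref{overXX}) with $\f, \g, k$ \'etale, I would check that under the canonical identifications $(\At\, \E)_\f \arsim k^*(\At\,\E)_\g$ built above (using $\beta_k^\E$ and the isomorphism $\gamma_k^{\E_\g}$ of Lemma \ref{Atiyah}), the splittings $s_\f$ and $k^* s_\g$ agree. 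This is a diagram chase: the compatibility $\omega_\f = (\beta_k^\E)^* \overline{k}^*\omega_\g$ of connection forms is precisely the statement that the induced horizontal distributions on $\E_\f$ and on $k^*\E_\g \cong \E_\f$ match, and passing to $G$-quotients turns this into agreement of the corresponding splittings of Atiyah sequences, using that the square in (\ref{Atiyahsequencepullback}) commutes and that $TU \to k^* TV$ is the canonical (iso)morphism. Hence the $s_\f$ glue to a splitting of $0 \to \ad\,\E \to \At\,\E \to T\XX \to 0$ on $\XX$.

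Conversely, suppose the Atiyah sequence on $\XX$ splits, say by $s : T\XX \to \At\,\E$. For each \'etale atlas $\f : U \to \XX$ this restricts to a splitting $s_\f : TU \to (\At\,\E)_\f = \At\,\E_\f$ of the Atiyah sequence of the $G$-bundle $\E_\f$ over the scheme $U$, and by Atiyah's theorem $s_\f$ is the splitting induced by a unique connection $\nabla_\f$ on $\E_\f$, with connection form $\omega_\f$. The fact that $s$ is a morphism on $\XX$ — i.e.\ that the $s_\f$ are compatible with the gluing isomorphisms $(\At\,\E)_\f \arsim k^*(\At\,\E)_\g$ — translates, by reversing the diagram chase of the forward direction, into the compatibility condition (\ref{connectioncompatibilitycondition}) for the $\omega_\f$. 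Thus $\{\nabla_\f\}$ is a connection on $\E$ in the sense defined above.

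The only real point requiring care — and the step I expect to be the main obstacle — is the bookkeeping in matching the two notions of ``compatibility'': the connection-form condition (\ref{connectioncompatibilitycondition}) is phrased using $\beta_k^\E$ and the pullback $\overline{k}$ of bundles over schemes, whereas the splitting lives on $\XX$ and its compatibility is phrased through the composite isomorphism $(\At\,\E)_\f \arsim \At\, k^*\E_\g \arsim k^*(\At\,\E)_\g$. Unwinding that these two conditions are equivalent amounts to checking that the Atiyah-sequence splitting attached to a connection form is natural under \'etale pullback and under isomorphisms of $G$-bundles, which is exactly the naturality already encoded in Lemma \ref{Atiyah} and Lemma \ref{associatedbundleiso}; once these naturality statements are in hand the equivalence is formal. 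I would therefore organize the proof so that the scheme-level equivalence (connection form $\leftrightarrow$ Atiyah splitting, naturally in $Y$) is invoked as a black box, and devote the writing to the one commuting diagram that shows the gluing data on both sides correspond.
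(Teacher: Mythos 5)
Your proposal is correct and follows essentially the same route as the paper: the paper's entire justification is the one-line remark that the only issue beyond the classical scheme-level equivalence is compatibility with the isomorphisms $\beta_k$, which is built into the definition of a connection. You have simply written out in detail the atlas-by-atlas reduction and the gluing check that the paper leaves implicit.
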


The only question here is the compatibility with the isomorphisms $\beta_k$, which is built into the definition of a connection.

\section{Root Stacks} \label{RootStacks}

\subsection{Definition} \label{rootstackdefn}

Let $X$ be a $\C$-scheme, $L$ an invertible sheaf over $X$, $s \in H^0(X, L)$ and $r \in \N$.  We will define $\XX = \XX_{(L,r,s)}$ to be the category whose objects are quadruples
\begin{align} \label{Xob}
(f : U \to X, N, \phi, t),
\end{align}
where $U$ is a $\C$-scheme, $f$ is a morphism of $\C$-schemes, $N$ is an invertible sheaf on $U$, $t \in H^0(U, N)$ and $\phi : N^{\otimes r} \arsim f^*L$ is an isomorphism of invertible sheaves with $\phi (t^{\otimes r}) = f^* s$.  A morphism
\begin{align*}
(f : U \to X, N, \phi, t) \to (g : V \to X, M, \psi, u)
\end{align*}
consists of a pair $(k, \sigma)$, where $k : U \to V$ is a $\C$-morphism making
\begin{align*}
\commtri{ U }{ V }{ X }{ k }{ f }{ g }
\end{align*}
commute and $\sigma : N \arsim k^* M$ is an isomorphism such that $\sigma(t) = h^*(u)$.  If
\begin{align*}
(g : V \to X, M, \psi, u) \xrightarrow{ (l, \tau) } ( h : W \to X, J, \rho, v)
\end{align*}
is another morphism, then the composition is defined as
\begin{align} \label{rootstackcomposition}
(l, \tau) \circ (k, \sigma) := ( l \circ k, k^* \tau \circ \sigma),
\end{align}
using the canonical isomorphism $(l \circ k)^* J \arsim k^* l^* J$.

We will often use the symbols $\f, \g$ to denote objects of $\XX$.  If we refer to $\f \in \Ob \, \XX(U)$, then it will be understood that this refers to the quadruple $\f = (f : U \to X, N_\f, \phi_\f, t_\f)$.

The category $\XX$ comes with a functor $\XX \to \Sch/\C$ which simply takes $\f$ to the $\C$-scheme $U$ and $(k, \sigma)$ to $k$.

\begin{prop}[{\cite[Theorem 2.3.3]{Cadman}}]
The category $\XX$, together with the structure morphism $\XX \to \Sch/\C$, is a Deligne--Mumford stack.
\end{prop}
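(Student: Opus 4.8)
The plan is to verify the axioms of a Deligne--Mumford stack for $\XX = \XX_{(L,r,s)}$ by exhibiting an explicit smooth (in fact \'etale) atlas. The strategy is standard: one first checks that $\XX$ is a stack (fibred in groupoids, with descent for objects and effectivity of descent), and then produces a presentation by a scheme with \'etale structure morphism, which simultaneously handles the representability and unramifiedness of the diagonal.

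First I would observe that $\XX \to \Sch/\C$ is fibred in groupoids: for a fixed $\C$-scheme $U$, the fibre $\XX(U)$ consists of triples $(N, \phi, t)$ with $N$ invertible on $U$, $\phi : N^{\otimes r} \arsim f^* L$ and $t \in H^0(U,N)$ with $\phi(t^{\otimes r}) = f^* s$; every morphism in $\XX$ over an isomorphism of schemes is an isomorphism, since $\sigma$ is by definition an isomorphism of invertible sheaves, and cartesian lifts along any $k : U \to V$ are given by pullback $(k^* M, k^* \psi, k^* u)$ with the evident structure isomorphism, unique up to unique isomorphism by the universal property of pullback of quasi-coherent sheaves. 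Then I would check the stack axioms: morphisms between two objects form a sheaf on the small (\'etale, or even fpqc) site of $U$ because an isomorphism of invertible sheaves is determined locally and the compatibility conditions $\sigma(t) = k^* u$, $\psi \circ \sigma^{\otimes r} = \phi$ are local; and descent for objects along an fpqc (or \'etale) cover $\{U_i \to U\}$ follows because invertible sheaves, their global sections, and isomorphisms between tensor powers all descend, so a descent datum glues to a genuine object $(N, \phi, t)$.

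The heart of the matter is to produce an \'etale atlas. Locally on $X$, after passing to an open $V \subseteq X$ over which $L|_V \cong \OO_V$ is trivialized, the section $s$ becomes a regular function $s_V \in \OO_X(V)$, and I claim the root stack $\XX|_V$ is isomorphic to the quotient stack $[\, (\Spec \OO_V[w]/(w^r - s_V)) \,/\, \mmu_r \,]$, where $\mmu_r$ acts by scaling $w$. Indeed, giving an object of $\XX$ over $U \to V$ is, after trivializing $N$ \'etale-locally on $U$ (which is possible since $N^{\otimes r}$ is trivial, being isomorphic to $f^* L|_V \cong \OO_U$), the same as giving a function $w = t$ with $w^r = s_V$, together with the gluing data for how the trivialization of $N$ twists — and this twisting is precisely a $\mmu_r$-torsor. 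Assembling these local quotient presentations (one checks they glue, using that the transition functions for $L$ act on the $w$-coordinate by $r$-th-root ambiguity, which is absorbed into the $\mmu_r$-action), one gets that $\XX$ is covered by quotient stacks $[\Gamma \backslash Y]$ with $\Gamma = \mmu_r$ finite; since $Y = \Spec \OO_V[w]/(w^r - s_V) \to V$ is finite and the stabilizers are subgroups of the finite group $\mmu_r$, the stack $\XX$ is Deligne--Mumford. Concretely, $Y \to \XX|_V$ is an \'etale atlas because $Y \to [\Gamma \backslash Y]$ is always an \'etale $\Gamma$-torsor for $\Gamma$ finite.

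The main obstacle I anticipate is the globalization step: checking that the local quotient-stack descriptions $\XX|_V \cong [\mmu_r \backslash Y_V]$ are compatible on overlaps $V \cap V'$, since the identification depends on a choice of trivialization of $L$, and changing the trivialization by a unit $u \in \OO_X^\times(V \cap V')$ rescales $w$ by an $r$-th root of $u$ — which is only defined after an \'etale base change. The clean way around this is not to insist on global quotient presentations at all, but merely to use them \'etale-locally: it suffices to exhibit, for each object $\f : U \to \XX$, an \'etale cover of $U$ over which $\f$ factors through a scheme mapping \'etale to $\XX$, and the local model $Y_V$ does exactly this. With the atlas in hand, one concludes via the standard criterion (e.g. \cite[Definition 7.18]{Vistoli_IntersectionTheory}, or the references to \cite{LMB}, \cite{Gomez_Stacks} already cited) that $\XX$ is a Deligne--Mumford stack, and since this is precisely \cite[Theorem 2.3.3]{Cadman}, I would in practice simply cite that result and sketch the atlas for the reader's convenience.
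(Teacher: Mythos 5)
The paper gives no argument for this statement at all: it is imported verbatim from Cadman with the citation standing in for the proof, so your sketch is not competing with anything in the text. What you propose is essentially the standard (and essentially Cadman's) argument, and it is consistent with what the paper itself records later: the local quotient presentation $\XX_{(\OO_X,s,r)} \cong [U/\mmu_r]$ with $U = \Spec\, A[t]/(t^r-s)$ is exactly Example 2.4.1 as quoted in Section \ref{RootStacks}, and your resolution of the globalization issue (do not seek a global quotient presentation; take the disjoint union of the local atlases $Y_V$) is the right one. The one step you assert rather than carry out is the representability of the diagonal: before you may call $Y_V \to \XX$ \'etale, the fibre products $Y_V \times_\XX T$ must be algebraic spaces, which requires knowing that for two objects $(N,\phi,t)$, $(M,\psi,u)$ over a scheme $T$ the functor $\mathrm{Isom}\bigl((N,\phi,t),(M,\psi,u)\bigr)$ is a scheme. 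For the root stack this is a direct check: inside the $\Gm$-torsor $\mathrm{Isom}(N,M)$ the condition $\psi\circ\sigma^{\otimes r}=\phi$ cuts out a (possibly empty) $\mmu_r$-torsor, hence something finite \'etale over $T$ (characteristic zero), and $\sigma(t)=k^*u$ is a further closed condition; so the diagonal is finite and unramified, which is precisely what makes $\XX$ Deligne--Mumford once a smooth atlas exists. You should either include this verification or note that Cadman's own route sidesteps it by realizing $\XX$ as the fibre product $X \times_{[\mathbb{A}^1/\Gm]} [\mathbb{A}^1/\Gm]$ along the $r$-th power map, from which algebraicity is formal and only the local model is needed for the DM property. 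With that point supplied, your sketch is correct.
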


There is also a functor $\pi : \XX \to \Sch/X$, whose action on objects and morphisms is given by
\begin{align*}
\f & \mapsto f : U \to X, & (k, \sigma) & \mapsto k;
\end{align*}
this yields a $1$-morphism over $\Sch/\C$, which we will often simply write as $\pi : \XX \to X$.  

\begin{ex}[{\cite[Example 2.4.1]{Cadman}}]\label{affinerootstack}
Suppose $X = \Spec \, A$ is an affine scheme, $L = \OO_X$ is the trivial bundle and $s \in H^0(X, \OO_X) = A$ is a function.  Consider $U = \Spec \, B$ where $B = A[t]/(t^r - s)$.  Then $U$ admits an action of the group (scheme) of $r$th roots of unity $\mmu_r$, where the induced action of $\zeta \in \mmu_r$ is given by
\begin{align*}
\zeta \cdot s & = s, \quad s \in A, & \zeta \cdot t & = \zeta^{-1} t.
\end{align*}
In this case, the root stack $\XX_{(\OO_X, s, r)}$ coincides with the quotient stack $[U/ \mmu_r]$.  Thus, as a quotient by a finite group (scheme), the map $U \to \XX$ is an \'etale cover.
\end{ex}

The root stack $\XX$ comes with a tautological line bundle $\NN$ which can be described as follows.  For an \'etale morphism $\f : U \to \XX$ with $U$ a $\C$-scheme, we define
\begin{align*}
\NN_\f := N_\f,
\end{align*}
where $N_\f$ is the line bundle .  The isomorphisms (\ref{sheafiso}) come from those occurring in the definition of a morphism in the category $\XX$.  One also has a global section $\t \in H^0(\XX, \NN)$ by taking
\begin{align*}
\t_\f := t_\f.
\end{align*}

\subsection{Finite Galois Coverings} \label{finiteGalois}

Let $p : Y \to X$ be a finite Galois covering of smooth quasi-projective 
varieties with Galois group $\Gamma$ and let $\widetilde{D} \subseteq Y$ be the 
locus of points which have non-trivial isotropy.  This is a divisor whose 
irreducible components are smooth \cite[Lemma 2.8]{BiswasOrbifold}.  We will 
assume that all such isotropy subgroups are cyclic of order $r$.  Let $D := 
p(\widetilde{D})$ so that
\begin{align*}
p^* D = r \widetilde{D}\, .
\end{align*}
Then $D$ is a divisor on $X$; let $s \in H^0(X, \OO_X(D))$ be a section defining 
$D$.  Then taking $u \in H^0(Y, \OO_Y(\widetilde{D}))$ defining $\widetilde{D}$, 
adjusting by a non-vanishing function if necessary, we may assume that under the 
isomorphism $\phi_Y : \OO_Y(\widetilde{D})^{\otimes r} \arsim p^* \OO_X(D)$, the 
equation
\begin{align*}
\phi_Y( u^{\otimes r}) = p^*s
\end{align*}
holds.  This defines a morphism $\p : Y \to \XX = \XX_{\OO_X(D), s, r}$.

Let $\f : U \to \XX$ be an arbitrary morphism from a $\C$-scheme $U$, say $\f$ is as in (\ref{Xob}), and consider the fibre product
\begin{align*}
U \times_\XX Y.
\end{align*}

\begin{lem} \label{Gammatorsor}
The projection morphism makes $U \times_\XX Y \to U$ into a $\Gamma$-torsor over $U$.
\end{lem}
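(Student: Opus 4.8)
The plan is to reduce the statement to a local, affine computation using the étale cover $Y \to \XX$ of Example \ref{affinerootstack}, and then to check the torsor property on the level of a suitable fibre product of schemes. First I would observe that since $\p : Y \to \XX$ is by construction an étale cover (it is a finite Galois cover realizing $\XX$ as the quotient stack $[\Gamma \backslash Y]$, exactly as in Example \ref{affinerootstack} with the cyclic isotropy groups playing the role of $\mmu_r$ locally), the base change $U \times_\XX Y \to U$ is automatically representable, étale, and finite; the only thing to verify is that it is a principal $\Gamma$-bundle, i.e., that $\Gamma$ acts freely and transitively on geometric fibres and that the map is fppf-locally on $U$ isomorphic to $\Gamma \times U \to U$.

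Next I would invoke the Cartesian square expressing $Y \to [\Gamma \backslash Y]$ as a $\Gamma$-torsor: from the isomorphism (\ref{YGamma}), $Y \times_{\XX} Y \cong \Gamma \times Y$, which says precisely that $\nu = \p : Y \to \XX$ is a $\Gamma$-torsor in the stacky sense. Since torsors are stable under arbitrary base change, pulling back along $\f : U \to \XX$ gives that $U \times_\XX Y \to U$ is a $\Gamma$-torsor. Concretely, I would exhibit the $\Gamma$-action on $U \times_\XX Y$ as the one induced from the action on the second factor, and note that the base change of the isomorphism $Y \times_\XX Y \cong \Gamma \times Y$ along $\f \times \id_Y$ yields $(U \times_\XX Y) \times_U (U \times_\XX Y) \cong \Gamma \times (U \times_\XX Y)$, which is the standard reformulation of the torsor condition (freeness and transitivity of the action, together with the local triviality that follows since $\Gamma$ is finite and the map is finite étale).

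The one point requiring a little care — and the step I expect to be the main obstacle — is checking that $U \times_\XX Y$ is actually a \emph{scheme} (so that the phrase ``$\Gamma$-torsor over $U$'' makes sense in $\Sch/\C$) rather than merely an algebraic space or stack, and that the fibre product is taken in the right 2-categorical sense. Here I would use that $Y \to \XX$ is representable and finite (being, étale-locally on $X$, the normalization map $\Spec A[t]/(t^r-s) \to [\,\Spec A[t]/(t^r-s) \,/\, \mmu_r\,]$ of Example \ref{affinerootstack}), so for any scheme $U$ the fibre product $U \times_\XX Y$ is a scheme, finite over $U$; and since $\p$ is étale, it is étale over $U$. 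Transitivity and freeness of the $\Gamma$-action on fibres can then be verified on geometric points, where $\XX$-points lift to $Y$-points and two such lifts differ by a unique element of $\Gamma$, exactly as in the covering $Y \to X$ away from $D$ — the root stack has been built so that this persists over $D$ as well. Finally, a finite étale map with a free transitive $\Gamma$-action on fibres, with $|\Gamma|$ equal to the degree, is a $\Gamma$-torsor, completing the proof.
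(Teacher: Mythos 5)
There is a genuine circularity in your argument. The isomorphism (\ref{YGamma}) reads $Y \times_{[\Gamma\backslash Y]} Y \cong \Gamma\times Y$; it is a statement about the quotient stack, not about the root stack $\XX = \XX_{\OO_X(D),s,r}$. The identification of $\XX$ with $[\Gamma\backslash Y]$ (compatibly with the two maps from $Y$) is precisely what Proposition \ref{globalquotientstack} establishes, and the present Lemma is an input to that proof. So you may not quote $Y\times_\XX Y\cong\Gamma\times Y$, nor describe $\p : Y\to\XX$ as ``realizing $\XX$ as the quotient stack'' or as being \'etale-locally the map of Example \ref{affinerootstack}: each of these assertions is equivalent to, or stronger than, the statement being proved. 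The same objection applies to the claim that $\p$ is an \'etale cover --- a priori $\p$ is only the morphism determined by the quadruple $(p,\OO_Y(\widetilde D),\phi_Y,u)$, and $p$ itself is ramified over $D$; that the root stack absorbs this ramification is the content of the lemma, not a hypothesis. (Your observation that $U\times_\XX Y$ is a scheme is fine, and the paper handles it the same way, via representability of the diagonal of $\XX$.)

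The step you dismiss with ``the root stack has been built so that this persists over $D$ as well'' is exactly where the work lies, and it does not follow formally. A $W$-point of $U\times_\XX Y$ is a triple $(a,b,\sigma)$ with $a : W\to U$, $b : W\to Y$, and $\sigma : a^*N_\f\arsim b^*\OO_Y(\widetilde D)$ satisfying $\sigma(a^*t_\f)=b^*u$; an element $\gamma\in\Gamma$ acts by $b\mapsto\gamma\circ b$ together with the induced action on $\sigma$. If $\gamma\neq e$ fixes $b$, then $b$ factors through $\widetilde D$, where the section $u$ vanishes, so the compatibility of $\sigma$ with the sections gives no information; freeness over the ramification locus must instead be extracted from the fact that the isotropy group acts faithfully on $\OO_Y(\widetilde D)|_{\widetilde D}$, i.e.\ on the normal bundle of $\widetilde D$ (for which the paper cites the proof of \cite[Lemma 2.8]{BiswasOrbifold}). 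Your proposal contains no substitute for this input, which is the actual mathematical content of the lemma.
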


\begin{proof}
Since $\XX$ has a representable diagonal, $U \times_\XX Y$ is a scheme (and hence its fibre categories are sets).  The $\Gamma$-action on $U \times_\XX Y$ is induced by that on $Y$.  We need to see that this action is free.  It is enough to check this on the $W$-points $(U \times_\XX Y)(W)$ for an arbitrary $\C$-scheme $W$.  Since $\Gamma$ is a finite group, there is no harm in assuming that $W$ is connected, so that we may identify $\Gamma(W)$ with $\Gamma$ as a group (in the set-theoretic sense).

The fibre category $(U \times_\XX Y)(W)$ is a set whose elements are triples $(a,b, \sigma)$, where $a : W \to U, b : W \to Y$ are morphisms such that
\begin{align*}
\commsq{W}{Y}{U}{\XX}{b}{a}{\hat{\pi}}{\f}
\end{align*}
commutes and $\sigma : a^* N_\f \arsim b^* \OO_Y(\widetilde{D})$ is an 
isomorphism with 
\begin{align} \label{absigma}
\sigma (a^* t_\f) = b^* u.
\end{align}

To explain the $\Gamma$-action on $(U \times_\XX Y)(W)$, we first recall that 
$\Gamma$ acts on the line bundle $\OO_Y(\widetilde{D})$ in a way that is 
compatible 
with the action on $Y$.  This action is via isomorphisms $\OO_Y(\widetilde{D}) 
\arsim \gamma^* \OO_Y(\widetilde{D})$ for each $\gamma \in \Gamma$.  Restricting 
this action to $\widetilde{D}$, we get one on 
$\OO_Y(\widetilde{D})|_{\widetilde{D}}$, 
which 
is the normal bundle to $\widetilde{D}$ in $Y$ (at least away from the 
intersections 
of the irreducible components of $\widetilde{D}$), and the action of $\Gamma$ is 
faithful (see the proof of Lemma 2.8 in \cite{BiswasOrbifold}). 
Now, $\gamma \in \Gamma$ acts on a triple $(a, b, \sigma)$ by taking $b$ to 
$\gamma \circ b$, and acting on $\sigma$ in a way to compensate for the fact that 
in (\ref{absigma}), we will now have $(\gamma \circ b)^* u$ instead of $b^*u$; 
this action comes from that on $\OO_Y(\widetilde{D})$ just described.

Fixing $(a, b, \sigma)$, if $\gamma$ lies in its isotropy subgroup, then it 
follows that $\gamma \circ b = b$, which implies that $b(W) \subseteq 
\widetilde{D}$.  But then $b^* \OO_Y(\widetilde{D})$ is the pullback of the 
normal 
bundle to $\widetilde{D}$ on which $\Gamma$ acts faithfully, so if $\gamma$ also 
fixes $\sigma$, then it must be the identity element.
\end{proof}

\begin{prop} \label{globalquotientstack}
Suppose $p : Y \to X$ is as above.  Then there is an equivalence of stacks
\begin{align*}
\XX \arsim [\Gamma \backslash Y ]\, .
\end{align*}
\end{prop}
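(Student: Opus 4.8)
The plan is to build the equivalence $\XX \arsim [\Gamma\backslash Y]$ by exhibiting a natural transformation and then checking it is fully faithful and essentially surjective, using the local model from Example \ref{affinerootstack} to reduce to a situation where everything is explicit. First I would use the morphism $\p : Y \to \XX$ constructed in Section \ref{finiteGalois} together with the $\Gamma$-action on $Y$ (and the compatible $\Gamma$-action on $\OO_Y(\widetilde D)$, which matches the data $u$ by the relation $\phi_Y(u^{\otimes r}) = p^*s$) to see that $\p$ is $\Gamma$-invariant up to canonical $2$-isomorphism; by the universal property of the quotient stack this factors as $Y \to [\Gamma\backslash Y] \xrightarrow{\Phi} \XX$. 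The map $\Phi$ sends a diagram $(M \to U,\ M \to Y)$ with $M \to U$ a $\Gamma$-torsor to the quadruple obtained by descending $(\p$-pullback of the tautological data on $Y)$ along $M \to U$; concretely, on an étale atlas $\f : U \to \XX$ one uses Lemma \ref{Gammatorsor}, which says $M := U\times_\XX Y \to U$ is exactly the $\Gamma$-torsor one needs, so $\Phi$ has an obvious candidate quasi-inverse $\Psi$ sending $\f$ to $(U\times_\XX Y \to U,\ U\times_\XX Y \to Y)$.

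The core of the argument is then to check $\Phi$ and $\Psi$ are mutually quasi-inverse, and for this the key step is to verify it étale-locally on $X$. Since the assertion is about stacks over $\Sch/\C$, it suffices to check that $\Phi$ induces an equivalence after base change along an étale (in fact Zariski) cover of $X$ trivializing $L = \OO_X(D)$ and on which $s$ becomes a regular function; shrinking further we land in the situation of Example \ref{affinerootstack}, where $X = \Spec A$, $L$ is trivial, and $\XX_{(\OO_X,s,r)} \cong [U_0/\mmu_r]$ with $U_0 = \Spec A[t]/(t^r - s)$. The point is that after this base change the finite Galois cover $Y \to X$ restricts (again after shrinking) to a cover whose connected components are copies of $U_0$ permuted by $\Gamma$, with the cyclic isotropy of order $r$ acting on each component as $\mmu_r$ acts on $U_0$; this is where the hypothesis that all isotropy groups are cyclic of order $r$ and $p^*D = r\widetilde D$ is used, and it identifies $[\Gamma\backslash Y]$ locally with $[\mmu_r \backslash U_0] = \XX$ compatibly with $\Phi$. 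Finally one checks the local equivalences glue, which is automatic from the canonical nature of all the descent isomorphisms involved (the same bookkeeping as in Lemma \ref{Atiyah} and Section \ref{associatedbundles}).

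The main obstacle I expect is the local structure statement: showing that, Zariski-locally on $X$ near a point of $D$, the Galois cover $Y \to X$ decomposes $\Gamma$-equivariantly into copies of the standard cyclic cover $U_0 \to \Spec A$ permuted simply transitively by $\Gamma/\langle\text{isotropy}\rangle$, and that the $\Gamma$-equivariant line bundle $\OO_Y(\widetilde D)$ with its section $u$ is carried to the tautological $(N_0, t_0)$ on $U_0$. This requires choosing a local coordinate cutting out $\widetilde D$ so that the generator of the isotropy acts by a primitive $r$th root of unity (possible since $\Gamma$ acts faithfully on the normal bundle to $\widetilde D$, as recalled in the proof of Lemma \ref{Gammatorsor}) and matching it with the coordinate $t$ on $U_0$; with that in hand, comparing $\XX$ and $[\Gamma\backslash Y]$ reduces to Example \ref{affinerootstack} and Proposition \ref{stackGamma}. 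Everything else — descent, the cocycle conditions, and the verification that $\Phi\circ\Psi$ and $\Psi\circ\Phi$ are $2$-isomorphic to the identity — is routine given the machinery already set up in Section \ref{pbstack}.
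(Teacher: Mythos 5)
Your two functors are exactly the ones the paper constructs: $\Phi$ descends the tautological data $(\OO_Y(\widetilde D),u)$ along the $\Gamma$-torsor, and $\Psi$ sends $\f : U \to \XX$ to $U\times_\XX Y \to U$ via Lemma \ref{Gammatorsor}. Where you diverge is in verifying that these are mutually quasi-inverse. The paper does this globally and formally in one line: the commutativity of the square (\ref{torsorsquare}) produces a morphism $P \to U\times_\XX Y$ of $\Gamma$-torsors over $U$, and any such morphism is automatically an isomorphism; no local model is needed beyond what already went into Lemma \ref{Gammatorsor}. You instead propose an \'etale-local reduction to Example \ref{affinerootstack}, which forces you to prove the local structure theorem for the tame Galois cover (that near a ramification point the cover splits $\Gamma$-equivariantly into standard cyclic covers $t^r=s$, essentially Abhyankar's lemma, including linearizing the isotropy action and extracting an $r$-th root of a unit). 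That statement is true in characteristic zero and your sketch of it is plausible, so your route does work, but it imports strictly more machinery than the problem requires, and your parenthetical ``in fact Zariski'' is wrong: the preimage $p^{-1}(U)$ of a Zariski-open $U$ is in general connected, so the splitting into components indexed by $\Gamma/\Gamma_y$ only happens after an \'etale (or formal/analytic) base change. What your approach buys in exchange is an explicit local picture of the equivalence, which is in the spirit of how the paper later analyzes local types and connections; what the paper's approach buys is that the whole proof reduces to the torsor property of $U\times_\XX Y \to U$, with all the geometry of the cover concentrated in Lemma \ref{Gammatorsor}.
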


\begin{proof}
We define a functor $[\Gamma \backslash Y] \to \XX$.  Suppose we are given an object of $[\Gamma \backslash Y]$ over $U$:  this is a diagram
\begin{align*}
\xymatrix{
P \ar[r]^\sigma \ar[d]_\rho & Y \\ U & }
\end{align*}
where $\rho : P \to U$ is a $\Gamma$-torsor and $\sigma : P \to Y$ is a $\Gamma$-equivariant morphism.  Since $\sigma$ is equivariant, $\sigma^* M$ gives a line bundle on $P$ with a $\Gamma$-action; obviously, it also comes with a section $\sigma^* u$ and an isomorphism $\sigma^* \alpha : (\sigma^* M)^{\otimes r} \arsim (\pi \circ \sigma)^* L$ with $(\sigma^* \alpha)( \sigma^* u)^{\otimes r} = (\pi \circ \sigma)^* s$.

As $\rho : P \to U$ is a $\Gamma$-torsor, $U$ is a geometric quotient by $\Gamma$.  The composition $p \circ \sigma : P \to Y \to X$ is a $\Gamma$-invariant morphism, and hence there is a unique morphism $f : U \to X$ making
\begin{align*}
\commsq{ P }{ Y }{ U }{ X }{ \sigma }{ \rho }{ p }{ f }
\end{align*}
commute.  The fact that $\sigma^* M$ has a compatible $\Gamma$-action means that it and the section $\sigma^* u$ descend to $U$ yielding an object $\f \in \Ob \, \XX(U)$, and a (2-)commutative diagram
\begin{align} \label{torsorsquare}
\vcenter{ \commsq{ P }{ Y }{ U }{ \XX. }{ \sigma }{ \rho }{ p }{ \f } }
\end{align}

Since a morphism in $[\Gamma \backslash Y]$ is simply a pullback diagram of $\Gamma$-torsors, it is clear to see how this functor acts on morphisms.  Thus, we have defined $[\Gamma \backslash Y] \to \XX$.

To go the other way, suppose we are given an object of $\XX$ over $U$, which 
translates into a morphism $\f : U \to \XX$.  Then by Lemma \ref{Gammatorsor}, the top and left arrows of the Cartesian square 
\begin{align} \label{Yfibre}
\vcenter{
\commsq{ U \times_\XX Y }{ Y }{ U }{ \XX, }{}{}{p}{\f} }
\end{align}
yield an object of $[ \Gamma \backslash Y]$.  A morphism in (the category) $\XX$ translates to a ($2$-)commutative diagram
\begin{align*}
\commtri{ U }{ V }{ \XX }{}{}{}
\end{align*}
in which case, one sees that the appropriate definition of this functor on morphisms is to take the pullback diagram of the torsor obtained above.  

One will note that the square in (\ref{torsorsquare}) is, in fact, Cartesian.  
Lemma \ref{Gammatorsor} states that the fibre product $U \times_\XX Y$ is a $\Gamma$-torsor over $U$.  The commutativity of (\ref{torsorsquare}) yields a morphism $P \to U \times_\XX Y$ which will be a morphism of $\Gamma$-torsors over $U$ and hence an isomorphism.  Once account is taken of this, one realizes that the diagrams (\ref{torsorsquare}) and (\ref{Yfibre}) are essentially the same, and that the functors are quasi-inverses of each other.  
\end{proof}

\subsection{Root Stacks Over Curves}

The following statement (and its attribution) can be found in \cite[Theorem 1.2.15]{Namba}.

\begin{theorem}[Bundgaard--Nielsen--Fox]\label{BNF}
Let $X$ be an irreducible projective algebraic curve over $\C$.  Let $Z := \{ 
x_1, \cdots, x_m \} \subseteq X$ be set of $m$ distinct points and let $r_1, 
\cdots, r_m \in \N_{\geq 2}$.  If $g = 0$, then we will assume that either $m 
\geq 3$ or $m = 2$ with $r_1 = r_2$.  Then there exists a projective curve $Y$ 
and 
a Galois covering $p : Y \to X$ such that $p|_{Y \setminus p^{-1}(Z)}$ is unramified and if $y \in p^{-1}(x_i)$ then the ramification index of $y$ is precisely $r_i$.
\end{theorem}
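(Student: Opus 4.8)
The plan is to reduce the statement to the classical uniformization theory of orbifold Riemann surfaces (equivalently, Fuchsian groups), which is precisely the content of the Bundgaard--Nielsen--Fox theorem in its original analytic guise. Writing $g$ for the genus of $X$, consider the orbifold fundamental group
\begin{align*}
\Gamma^{\mathrm{orb}} = \left\langle a_1, b_1, \ldots, a_g, b_g, c_1, \ldots, c_m \;\middle|\; c_i^{r_i} = 1, \ \prod_{j=1}^g [a_j,b_j] \prod_{i=1}^m c_i = 1 \right\rangle,
\end{align*}
the fundamental group of the $2$-orbifold with underlying surface $X(\C)$ and cone points of orders $r_1, \ldots, r_m$ at $x_1, \ldots, x_m$. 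First I would invoke the fact (the analytic Bundgaard--Nielsen--Fox theorem, as in Namba's reference) that under the stated hypotheses --- which are exactly the conditions ruling out the ``bad'' orbifolds $(0; r_1)$ and $(0; r_1, r_2)$ with $r_1 \neq r_2$ --- this orbifold group admits a torsion-free normal subgroup $\Pi \trianglelefteq \Gamma^{\mathrm{orb}}$ of finite index. The quotient $\Gamma := \Gamma^{\mathrm{orb}}/\Pi$ is then a finite group, and $\Pi$ itself, being torsion-free, is the fundamental group of a compact Riemann surface $Y(\C)$; the inclusion $\Pi \hookrightarrow \Gamma^{\mathrm{orb}}$ induces a ramified covering $p : Y(\C) \to X(\C)$ with deck group $\Gamma$.

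The second step is to control the ramification. By construction of the orbifold covering, a point $y \in p^{-1}(x_i)$ has ramification index equal to the order of the image of $c_i$ in $\Gamma$; since $c_i$ has order exactly $r_i$ in $\Gamma^{\mathrm{orb}}$ and $\Pi$ is torsion-free (so $\Pi \cap \langle c_i \rangle = 1$), this order is precisely $r_i$. Away from $p^{-1}(Z)$ the map is unramified because $\Gamma^{\mathrm{orb}}$ acts freely there on the universal cover. The third step is algebraization: $Y(\C)$ is a compact Riemann surface, hence by the Riemann existence theorem (GAGA) carries a unique structure of smooth projective algebraic curve $Y$ over $\C$, and the holomorphic Galois covering $p$ is automatically algebraic, with $\Gamma$ acting by algebraic automorphisms. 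This delivers the curve $Y$ and the Galois covering $p : Y \to X$ with exactly the asserted ramification data.

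The main obstacle is the case $g = 0$ with few branch points, which is exactly why the hypotheses $m \geq 3$, or $m = 2$ with $r_1 = r_2$, are imposed: these are the only situations on the sphere where a finite-index torsion-free subgroup of $\Gamma^{\mathrm{orb}}$ fails to exist (the ``spherical bad orbifolds''), and every other case --- including all $g \geq 1$, and $g = 0$ with $m \geq 3$ --- does admit one, either because $\Gamma^{\mathrm{orb}}$ is already a Euclidean or hyperbolic crystallographic-type group with the desired property, or by a direct residual-finiteness argument (e.g.\ via Selberg's lemma, since $\Gamma^{\mathrm{orb}}$ is finitely generated and linear). Since the problem statement explicitly invokes \cite[Theorem 1.2.15]{Namba}, I would simply cite that reference for the existence of $\Pi$ and confine the written proof to steps two and three above, namely verifying that the ramification indices come out correctly and that the resulting analytic covering algebraizes.
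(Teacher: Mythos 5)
The paper does not actually prove this statement: it is quoted verbatim from \cite[Theorem 1.2.15]{Namba} and used as a black box, so there is no internal argument to compare yours against. Your reconstruction is the standard analytic proof and is essentially sound: form the orbifold fundamental group $\Gamma^{\mathrm{orb}}$, find a torsion-free normal subgroup $\Pi$ of finite index, take $Y$ to be the surface uniformized by $\Pi$, check that the ramification index at $y \in p^{-1}(x_i)$ is the order of the image of $c_i$ in $\Gamma^{\mathrm{orb}}/\Pi$, and algebraize by Riemann existence/GAGA. The one place where your phrasing is slightly off is the diagnosis of the excluded cases: for the teardrop $(0;r)$ and the spindle $(0;r_1,r_2)$ with $r_1 \neq r_2$, the group $\Gamma^{\mathrm{orb}}$ is finite (indeed trivial or cyclic of order $\gcd(r_1,r_2)$), so a torsion-free finite-index subgroup \emph{does} exist trivially; what fails is that $c_i$ no longer has order $r_i$ in $\Gamma^{\mathrm{orb}}$, so no quotient can produce the prescribed ramification. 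Your step two explicitly relies on ``$c_i$ has order exactly $r_i$ in $\Gamma^{\mathrm{orb}}$,'' and that is precisely the assertion that needs the goodness hypotheses $m \geq 3$ or $m=2$, $r_1=r_2$ (when $g=0$); since you defer the existence of $\Pi$ to Namba anyway, you should defer this order computation to the same source, or note that it follows from developability of the orbifold (the action of $\Gamma^{\mathrm{orb}}$ on $S^2$, $\R^2$ or $\Hy$ realizes each $c_i$ as a rotation of exact order $r_i$). With that caveat the proposal is a correct and self-contained route to a result the paper only cites.
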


By taking $r := r_1 = \cdots = r_m$, we obtain the following from Proposition \ref{globalquotientstack}.

\begin{cor} \label{globalquotient}
Let $X$ be as in the Theorem.  If $g \geq 1$ or $m > 1$, then the associated root stack $\XX = \XX_{\OO_X(S), s, r}$ is a global quotient stack.  Precisely, if a Galois covering $p : Y \to X$ is chosen as in the Theorem, with Galois group $\Gamma$, then there is an equivalence
\begin{align*}
\XX \arsim [\Gamma \backslash Y ].
\end{align*}
\end{cor}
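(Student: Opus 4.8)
The plan is to deduce Corollary~\ref{globalquotient} directly from Theorem~\ref{BNF} (Bundgaard--Nielsen--Fox) together with Proposition~\ref{globalquotientstack}, by checking that the hypotheses of each of these match up under the stated conditions on $g$ and $m$. First I would observe that the role of $r$ in the root stack $\XX = \XX_{\OO_X(S),s,r}$ requires all the ramification indices to be equal; this is why we set $r := r_1 = \cdots = r_m$ at the outset. With this choice, Theorem~\ref{BNF} produces a projective curve $Y$ and a Galois covering $p : Y \to X$, ramified exactly over $Z$, with every point of $p^{-1}(x_i)$ having ramification index precisely $r$. Let $\Gamma$ denote its Galois group.

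Next I would verify that this $p : Y \to X$ satisfies the running hypotheses of Subsection~\ref{finiteGalois} (the hypotheses of Proposition~\ref{globalquotientstack}). The covering is Galois between smooth projective --- hence smooth quasi-projective --- curves; its branch locus downstairs is $D := Z$, a divisor on $X$. The key point is that every nontrivial isotropy subgroup is cyclic of order $r$: over a curve, the inertia group at a ramification point of a Galois cover is cyclic, and its order equals the ramification index, which by Theorem~\ref{BNF} is $r$ at every such point. Thus $\widetilde{D} := p^{-1}(Z)_{\red}$ is the ramification divisor, all isotropy is cyclic of order $r$, and $p^* D = r\widetilde{D}$, exactly as demanded in Subsection~\ref{finiteGalois}. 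Choosing $s \in H^0(X, \OO_X(D))$ cutting out $D$ and adjusting $u \in H^0(Y, \OO_Y(\widetilde{D}))$ as in that subsection identifies $\XX_{\OO_X(D),s,r}$ with the root stack in the statement (with $S = D = Z$). Proposition~\ref{globalquotientstack} then yields the equivalence $\XX \arsim [\Gamma \backslash Y]$, so $\XX$ is a global quotient stack.

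The one genuine thing to check is that the hypothesis ``$g \geq 1$ or $m > 1$'' of the Corollary implies the hypothesis of Theorem~\ref{BNF}, namely that if $g = 0$ then either $m \geq 3$ or ($m = 2$ with $r_1 = r_2$). If $g \geq 1$ there is nothing to do. If $g = 0$ and $m > 1$, then either $m \geq 3$ (first case of the Theorem), or $m = 2$; in the latter case, since we have already imposed $r_1 = r_2 = r$, the second case of the Theorem applies. (The excluded configuration $g = 0$, $m = 1$ is precisely the one where no such Galois cover exists --- a single orbifold point on $\P^1$ --- so it is rightly omitted.) This case analysis, though short, is the only real content beyond quoting the two prior results; everything else is a matter of matching notation. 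I would therefore expect the ``obstacle,'' such as it is, to be nothing more than making this hypothesis-matching and the cyclicity-of-inertia remark explicit, after which the Corollary follows immediately by applying Proposition~\ref{globalquotientstack}.
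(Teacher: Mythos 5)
Your proposal is correct and follows exactly the route the paper takes: the paper's entire proof is the one-line remark that setting $r := r_1 = \cdots = r_m$ and invoking Theorem~\ref{BNF} together with Proposition~\ref{globalquotientstack} yields the result. Your additional verifications (cyclicity of the inertia groups over a curve, and the case analysis reconciling ``$g \geq 1$ or $m > 1$'' with the hypotheses of the Theorem) are precisely the details the paper leaves implicit, and they are all accurate.
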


\section{Bundles and Root Stacks} \label{BundlesRootStacks}

\subsection{Parabolic Vector Bundles and Root Stacks}

We recall the definition of a parabolic vector bundle over a $\C$-scheme $X$ with respect to an effective Cartier divisor $D$ given in \cite[\S1]{Yokogawa1995}.  We will use $\R$ as an index category, whose objects are real numbers and in which a (single) morphism $\beta \to \alpha$ exists, by definition, precisely when $\beta \leq \alpha$.  Let $\E_*$ be a functor $E_* : \R^\op \to \QCoh(X)$, where $\QCoh(X)$ is the category of quasi-coherent $\OO_X$-modules.  If $\alpha \in \R$, we simply write $E_\alpha$ for $E_*(\alpha)$, and $i_{\alpha, \beta}$ for the morphism $E_\alpha \to E_\beta$ given by the functor $E_*$ when $\alpha \geq \beta$.  

Given $E_*$ as above and $\gamma \in \R$, one can define another functor $E[\gamma]_* : \R^\op \to \QCoh(X)$  by setting
\begin{align*}
E[\gamma]_\alpha := E_{\alpha + \gamma},
\end{align*}
together with the only possible definition on morphisms.  If $\gamma \geq 0$, then there is a natural transformation $E[\gamma]_* \to E_*$.  The functor $E_*$ is called a \emph{parabolic sheaf} if it comes with a natural isomorphism of functors $j : E_* \otimes_{\OO_X} \OO_X(-D) \arsim E[1]_*$ fitting into a commutative diagram
\begin{align*}
\xymatrixcolsep{5pc}
\xymatrix{
E_* \otimes_{\OO_X} \OO_X(-D) \ar[r]^-j \ar[rd] & E[1]_* \ar[d] \\
& E_*}
\end{align*}
The sheaf $E_0$ is often referred to as the \emph{underlying sheaf} and written as simply $E$.  A morphism of parabolic sheaves $(E_*, j) \to (F_*, k)$ is a natural transformation $E_* \to F_*$ intertwining $j$ and $k$.

A parabolic sheaf $(E_*, j)$ is said to be \emph{coherent} if it factors through $\Coh(X) \to \QCoh(X)$, where $\Coh(X)$ is the category of $\OO_X$-modules and further if there is a finite sequence of real numbers $0 \leq \alpha_1 < \alpha_2 < \cdots < \alpha_l < 1$ such that if $\alpha \in (\alpha_{i-1}, \alpha_i]$, then
\begin{align*}
i_{\alpha_i, \alpha} : E_{\alpha_i} \to E_\alpha
\end{align*}
is the identity map.  We will thus have a filtration of sheaves
\begin{align} \label{sheaffiltration}
E := E_0 \supset E_{\alpha_1} \supset E_{\alpha_2} \supset \cdots \supset E_{\alpha_l} \supset E_1 \arsim E(-D).
\end{align}
A coherent parabolic sheaf $(E_*, j)$ is called a \emph{parabolic vector bundle} if $E_*$ takes values in the category $\Vect(X)$ of vector bundles over $X$ and further, whenever $\beta \leq \alpha < \beta +1$, the sheaf $\coker \, i_{\alpha, \beta}$, which is supported on $D$, is locally free as an $\OO_D$-module.  The category of parabolic vector bundles will be denoted by $\ParVect_D(X) = \ParVect(X)$.

We will say that the coherent parabolic sheaf $(E_*, j)$ has \emph{rational weights} if the $\alpha_i, 1 \leq i \leq l,$ may be chosen in $\Q$.  Since this is a finite set, these weights may be chosen in $\frac{1}{r}\Z$ for some $r \in \N$; in this case, we will think of $E_*$ as a functor $(\frac{1}{r} \Z)^\op \to \Coh(X)$.  For obvious reasons, we can say then that $E_*$ has \emph{weights dividing $r$}.  We will denote by $\ParVect_{D,r}(X) = \ParVect_r(X)$ the category of parabolic vector bundles with weights dividing $r$.

One of the main results of \cite{Borne} is that parabolic vector bundles on $X$ correspond to vector bundles on an appropriate root stack.  We give a precise statement.  With $X$ and $D$ as above.  Let $s \in H^0(X, \OO_X(D))$ be a section defining the divisor $D$ and fix $r \in \N$.  We will let $\XX := \XX_{\OO_X(D), s, r}$ be the corresponding root stack.

\begin{theorem}[{\cite[Th\'eor\`eme 3.13]{Borne}}]\label{BorneVB}
The functor $\Vect(\XX) \to \ParVect_{D,r}(X)$ which takes $\V$ to the functor $E_* : (\frac{1}{r}\Z)^\op \to \Vect(X)$
\begin{align*}
\tfrac{i}{r} \mapsto \pi_* ( \V \otimes_{\OO_\XX} \NN^{\otimes -i} )
\end{align*}
is an equivalence of tensor categories.
\end{theorem}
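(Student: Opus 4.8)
The plan is to exhibit the claimed equivalence by first treating the case where $\XX$ is a quotient stack $[\mmu_r \backslash U]$ for $U = \Spec A[t]/(t^r - s)$ (as in Example \ref{affinerootstack}), and then globalizing. By Proposition \ref{stackGamma} and the theory of descent, a vector bundle $\V$ on such a local model is the same as a $\mmu_r$-equivariant vector bundle on $U$; since $U \to X$ is finite flat of degree $r$, pushing forward decomposes the bundle according to the characters of $\mmu_r$, i.e.\ according to the eigenvalue of the $t$-action. The tautological line bundle $\NN$ restricts on $U$ to the trivial bundle with the generating section $\t_\f = t_\f$ on which $\mmu_r$ acts by the character $\zeta \mapsto \zeta^{-1}$; hence twisting $\V$ by $\NN^{\otimes -i}$ shifts the eigenspace decomposition by $i$, and $\pi_*(\V \otimes \NN^{\otimes -i})$ picks out a coherent sheaf $E_{i/r}$ on $X$. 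First I would check that the natural maps $E_{i/r} \to E_{(i-1)/r}$ (induced by multiplication by $\t$, using $\phi_\f(\t^{\otimes r}) = f^*s$) assemble into a functor $(\tfrac1r\Z)^{\op} \to \Vect(X)$, that the relation $\NN^{\otimes r} \cong \pi^*\OO_X(D)$ supplies the isomorphism $j : E_* \otimes \OO_X(-D) \arsim E[1]_*$, and that the composite $E[1]_* \to E_*$ is the inclusion $E(-D) \hookrightarrow E$, so that $(E_*, j)$ is genuinely a parabolic sheaf.

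Next I would verify the three properties required of a parabolic \emph{vector bundle}: that each $E_{i/r}$ is locally free, that the filtration \eqref{sheaffiltration} has length $< r$ with weights in $\tfrac1r\Z$, and that the successive quotients $\coker(E_{i/r} \to E_{(i-1)/r})$ are locally free as $\OO_D$-modules. Locally, with $X = \Spec A$ and $D = \{s = 0\}$, the bundle $\V$ corresponds to a projective $B = A[t]/(t^r-s)$-module $P$ with compatible $\mmu_r$-grading $P = \bigoplus_{i=0}^{r-1} P_i$ (where $t \cdot P_i \subseteq P_{i-1 \bmod r}$ and $t^r$ acts as $s$); then $E_{i/r}$ is $P_i$ viewed as an $A$-module, and the required local freeness over $A$ and over $A/sA$ follows from that of $P$ over $B$ together with the grading. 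This is the step I expect to be the main obstacle: one must show that the $\mmu_r$-eigenspaces of a locally free sheaf on the (stacky, hence a priori mildly singular-looking) total space descend to locally free sheaves on $X$ with locally free quotients along $D$ — equivalently, that the Borne-style ``local abelianization'' of $\V$ near $D$ really does produce a locally free associated graded. The cleanest route is to work \'etale-locally on $X$ where $L$ is trivialized and $s$ is part of a regular system of parameters (using smoothness of $X$, or at worst that $D$ is an effective Cartier divisor), reducing to an explicit computation with the ring $B = A[t]/(t^r - s)$ and its obvious $\Z/r$-grading.

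Having built the functor, I would construct the quasi-inverse: given $(E_*, j) \in \ParVect_{D,r}(X)$, form $\V := \bigoplus_{i=0}^{r-1} \pi^* E_{i/r} \otimes \NN^{\otimes i}$ with the $\OO_\XX$-module structure twisted by the maps $i_{\alpha,\beta}$ and $j$, following Borne; then check $\V$ is locally free on $\XX$ by passing to the \'etale cover $U \to \XX$, where it becomes the $B$-module $\bigoplus_i E_{i/r} \otimes_A B\langle i\rangle$ reassembled via $j$, which is projective precisely because of the local-freeness-of-quotients hypothesis. Finally I would check that the two constructions are mutually quasi-inverse (again by \'etale descent, reducing to the module statement $P \cong \bigoplus_i P_i$) and that the functor is monoidal: $\NN^{\otimes -i} \otimes \NN^{\otimes -j} \cong \NN^{\otimes -(i+j)}$ together with compatibility of $\pi_*$ with the parabolic tensor product of \cite{Yokogawa1995}, \cite{Borne} gives the tensor-equivalence claim. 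The globalization from the affine/quotient model to arbitrary $X$ and $D$ is routine: all constructions are compatible with \'etale base change on $X$, and root stacks glue \'etale-locally, so the local equivalences patch to the asserted equivalence of categories.
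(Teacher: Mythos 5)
The paper does not actually prove this statement: Theorem \ref{BorneVB} is imported verbatim from Borne's \emph{Fibr\'es paraboliques et champ des racines} (Th\'eor\`eme 3.13), so there is no internal argument to compare yours against. Judged on its own terms, your sketch follows the route Borne takes, and the one this paper itself relies on elsewhere: reduce \'etale-locally to the quotient presentation $[\mmu_r \backslash \Spec \, B]$ with $B = A[t]/(t^r - s)$ (Example \ref{affinerootstack}), identify $\Vect(\XX)$ with $\mmu_r$-equivariant projective $B$-modules via Proposition \ref{stackGamma}, diagonalize the action (Borne's Proposition 3.15, which the authors invoke in the proof of Proposition \ref{rootstackconnection}), and read off the filtration (\ref{sheaffiltration}) and the local freeness of the graded pieces over $\OO_D$ from the explicit basis $t^{p_j} e_j$. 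Your identification of the genuinely delicate step --- local freeness of the eigenspaces and of the successive cokernels --- is the right one, and the diagonalization handles it.

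One point in your write-up needs repair. As literally written, your quasi-inverse $\V := \bigoplus_{i=0}^{r-1} \pi^* E_{i/r} \otimes \NN^{\otimes i}$ has rank $rn$ rather than $n$, and the phrase ``with the $\OO_\XX$-module structure twisted by the maps $i_{\alpha,\beta}$ and $j$'' does not by itself fix this. The correct object is the coend (equivalently, the colimit over the index poset $\tfrac{1}{r}\Z$): the direct sum must be divided by the identifications matching $\pi^* i_{\alpha,\beta}$ against multiplication by powers of the tautological section of $\NN$, so that locally one recovers $P \cong \bigoplus_i P_i$ with each eigenspace contributing exactly once. With that correction, and noting that full faithfulness reduces to the same local module computation, your outline is a faithful reconstruction of Borne's proof.
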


\subsection{Degree of a Vector Bundle over a Root Stack}

Assume now that $X$ is a smooth projective variety with very ample invertible sheaf $\OO_X(1)$.  Let $D, r,$ and $\XX$ be as above.  Then if $\V$ is a vector bundle over $\XX$, \cite[D\'efinition 4.2]{Borne} defines its \emph{degree} as
\begin{align*}
\deg_\XX \V := q_* \big( c_1^{et}(\V) \cdot \pi^* c_1^{et} \big( \OO_X(1) \big)^{n-1} \big),
\end{align*}
where $q : \XX \to \Spec \, \C$ is the structure morphism.

One has the following theorem.

\begin{theorem}[{\cite[Th\'eor\`eme 4.3]{Borne}}]\label{Bornedeg}
Let $\V$ be a vector bundle on the root stack $\XX$ and $E_*$ the corresponding parabolic vector bundle over $X$ (via the equivalence given in Theorem \ref{BorneVB}).  Then
\begin{align*}
\pardeg \, E_* = \deg_\XX \V.
\end{align*}
\end{theorem}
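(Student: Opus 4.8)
The plan is to reduce the statement to the standard definition of parabolic degree and then use Borne's identification of the filtered pieces. Recall that the parabolic degree of $E_*$ is by definition
\begin{align*}
\pardeg \, E_* = \deg E_0 + \sum_{i=1}^{l} \alpha_i \cdot \rk \big( \coker \, i_{\alpha_i, 0} \big) = \deg E + \sum_{i} \alpha_i \cdot \dim_{\C} \big( (E_{\alpha_{i-1}}/E_{\alpha_i})|_{D} \big),
\end{align*}
with the weights $0 \le \alpha_1 < \cdots < \alpha_l < 1$ of the form $\alpha_i = n_i/r$, and where $E_{\alpha_i} = \pi_*(\V \otimes \NN^{\otimes -n_i})$ under the equivalence of Theorem \ref{BorneVB}. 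So the task is to show that $\deg_\XX \V$, defined via $c_1^{et}$ and the pushforward $q_*$, expands into exactly this combination. I would follow the line of argument that underlies \cite[Th\'eor\`eme 4.3]{Borne}, which I am permitted to cite; but let me indicate the structure.

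First I would record that, since $\deg_\XX$ is additive on short exact sequences (it is defined through $c_1^{et}$, which is additive), and since $\pardeg$ is likewise additive on short exact sequences of parabolic bundles, it suffices to treat the ``building block'' cases: a vector bundle pulled back from $X$, i.e.\ $\V = \pi^* W$ with the trivial parabolic structure, and the tautological line bundle $\NN$ together with its twists $\NN^{\otimes -j}$ for $0 \le j < r$. For $\V = \pi^* W$ one uses the projection formula together with the fact (from Borne, or computed directly from $\pi$ being the coarse-space map away from $D$) that $q_* \pi^* = \deg_X$, giving $\deg_\XX \pi^* W = \deg W$, while the corresponding parabolic bundle has all weights zero and hence $\pardeg = \deg W$. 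For the twists of $\NN$, the key input is that $\NN^{\otimes r} \cong \pi^* \OO_X(D)$, so $c_1^{et}(\NN) = \tfrac{1}{r}\pi^* c_1(\OO_X(D))$ in the rational Chow/cohomology group of $\XX$, whence $\deg_\XX \NN^{\otimes -j} = -\tfrac{j}{r}\deg_X D = -\tfrac{j}{r}\deg \OO_X(D)$; on the parabolic side one checks directly from the definition $\tfrac{i}{r} \mapsto \pi_*(\NN^{\otimes -j} \otimes \NN^{\otimes -i})$ that the resulting parabolic line bundle has underlying bundle $\OO_X(-D)$ shifted appropriately and a single jump, producing the same number.

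The main obstacle I anticipate is the bookkeeping in the ``building block'' reduction: one must verify that every vector bundle $\V$ on $\XX$ admits a filtration whose graded pieces are of the above two types (pullbacks from $X$, possibly twisted by $\NN^{\otimes -j}$, with the graded pieces of the twisted parts supported near $D$), and that this filtration is compatible with the one in \eqref{sheaffiltration} under the equivalence. This is essentially the content of Borne's local structure theory for sheaves on root stacks: \'etale-locally on $X$, near a point of $D$, the stack $\XX$ looks like $[\Spec \C[t]/\mmu_r]$ (Example \ref{affinerootstack}), and a $\mmu_r$-equivariant module decomposes into isotypic components, each of which is $\NN^{\otimes -j}$ times a pullback; gluing these local decompositions and tracking the pushforward $\pi_*$ of each isotypic component recovers precisely the jumps $E_{\alpha_i}/E_{\alpha_{i+1}}$ and their ranks on $D$. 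Once that dictionary is in place, comparing $\deg_\XX \V$ term-by-term with $\pardeg E_*$ is a finite computation, and the proof concludes by additivity. I would therefore present the argument as: (1) both sides additive; (2) reduce to line-bundle building blocks via the local isotypic decomposition; (3) compute each side on the building blocks using $\NN^{\otimes r} \cong \pi^*\OO_X(D)$ and $q_*\pi^* = \deg_X$; (4) assemble, citing \cite[Th\'eor\`eme 4.3]{Borne} for the routine verifications.
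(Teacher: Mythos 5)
The paper does not prove this statement: Theorem \ref{Bornedeg} is imported verbatim from Borne \cite[Th\'eor\`eme 4.3]{Borne}, so there is no in-paper argument to compare yours against. Judged on its own terms, your outline is broadly the right strategy and your building-block computations come out correctly: $\deg_\XX \NN^{\otimes -j} = -\tfrac{j}{r}\deg D$ from $\NN^{\otimes r} \cong \pi^*\OO_X(D)$, while $\pi_*(\NN^{\otimes -j-i}) = \OO_X(\lfloor (-j-i)/r\rfloor D)$ gives a parabolic line bundle with underlying sheaf $\OO_X(-D)$ and a single jump at weight $(r-j)/r$, whence $\pardeg = -\deg D + \tfrac{r-j}{r}\deg D = -\tfrac{j}{r}\deg D$ as well.

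The one genuine soft spot is step (2). A vector bundle on $\XX$ does \emph{not} in general admit a global filtration whose graded pieces are of the form $\pi^*W \otimes \NN^{\otimes -j}$: the isotypic decomposition you invoke exists only \'etale-locally near $D$ (where $\XX$ is $[\Spec\, \C[t]/\mmu_r]$), and these local decompositions do not glue to a global one. You flag this as "the main obstacle" but then wave at "gluing," which is exactly the step that fails. The repair is cheap, though, and worth making explicit: $\deg_\XX$ depends only on $c_1^{et}(\V) = c_1^{et}(\det\V)$, the equivalence of Theorem \ref{BorneVB} is a tensor equivalence so $\det\V$ corresponds to the parabolic determinant of $E_*$, and $\pardeg E_* = \pardeg(\det_{\mathrm{par}}E_*)$ is a standard identity. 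Since $\Pic(\XX)$ is generated by $\pi^*\Pic(X)$ and $\NN$ subject to $\NN^{\otimes r}\cong\pi^*\OO_X(D)$, every line bundle on $\XX$ genuinely is a building block $\pi^*M\otimes\NN^{\otimes k}$, and your computation in step (3) then finishes the proof without any global filtration of $\V$ itself. (Alternatively, on a curve one can filter $\V$ by subbundles with line-bundle quotients and use exactness of $\pi_*$, but the determinant route works in all dimensions, which is the generality in which the theorem is stated.) Also note a small indexing slip: with Yokogawa's convention $E_\alpha = E_{\alpha_i}$ for $\alpha\in(\alpha_{i-1},\alpha_i]$, the weight $\alpha_i$ is attached to the graded piece $E_{\alpha_i}/E_{\alpha_{i+1}}$, not to $E_{\alpha_{i-1}}/E_{\alpha_i}$.
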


\subsection{Principal Bundles Over Root Stacks} \label{PBRS}

We return to the situation of Section \ref{finiteGalois}, where  $p : Y \to X$ is a finite Galois covering of smooth quasi-projective varieties with Galois group $\Gamma$.  By Proposition \ref{stackGamma}, we immediately obtain the following.

\begin{cor} \label{GammaGcurve}
With $\XX$ as in Corollary \ref{globalquotient}, there is an equivalence
\begin{align*}
\Bun_G \, \XX \arsim \Bun_{\Gamma, G} Y.
\end{align*}
\end{cor}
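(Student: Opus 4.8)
The plan is to combine the two equivalences already assembled in the excerpt. Corollary \ref{globalquotient} gives, for $\XX = \XX_{\OO_X(S), s, r}$ with $g \geq 1$ or $m > 1$, an equivalence of stacks $\XX \arsim [\Gamma \backslash Y]$, where $p : Y \to X$ is a Galois covering chosen as in Theorem \ref{BNF} with Galois group $\Gamma$. On the other hand, Proposition \ref{stackGamma} gives an equivalence $\Bun_{\Gamma, G} Y \arsim \Bun_G [\Gamma \backslash Y]$. So the only thing left to do is to observe that an equivalence of stacks $\XX \arsim [\Gamma \backslash Y]$ induces an equivalence on categories of principal $G$-bundles, and then compose.

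The key steps, in order, are as follows. First, I would note that $\Bun_G$ is functorial in the stack: a $1$-morphism $\Phi : \XX \to \YY$ of (Deligne--Mumford) stacks induces a pullback functor $\Phi^* : \Bun_G \YY \to \Bun_G \XX$, and a $2$-isomorphism of $1$-morphisms induces a natural isomorphism of the corresponding pullback functors. The cleanest way to see this is via Lemma \ref{GbundleBG} (and the subsequent Proposition): a principal $G$-bundle on $\YY$ is a morphism $\YY \to BG$, so pulling back along $\Phi$ just means precomposing to get $\XX \to \YY \to BG$; the relative version over $\XX \times U$, $\YY \times U$ handles the full fibred-category structure of $\Bun_G$. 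Second, I would apply this to the equivalence $\Phi : \XX \arsim [\Gamma \backslash Y]$ of Corollary \ref{globalquotient} and its quasi-inverse $\Psi$: the composites $\Psi \circ \Phi$ and $\Phi \circ \Psi$ are $2$-isomorphic to the respective identities, so the induced pullback functors $\Phi^*$ and $\Psi^*$ are quasi-inverse to one another, giving $\Bun_G \XX \arsim \Bun_G [\Gamma \backslash Y]$. Third, I would compose with the equivalence of Proposition \ref{stackGamma} to obtain
\begin{align*}
\Bun_G \XX \arsim \Bun_G [\Gamma \backslash Y] \arsim \Bun_{\Gamma, G} Y,
\end{align*}
which is the assertion.

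I do not expect a genuine obstacle here; the corollary is essentially a formal bookkeeping consequence of the two results just cited. The one point requiring a little care is that $\Bun_G$ was defined in Section \ref{principalbundles} via atlases and descent data rather than intrinsically, so strictly speaking one should either (a) invoke Lemma \ref{GbundleBG} to replace this definition by the intrinsic $\Hom(\XX, BG)$ description, for which $2$-functoriality in $\XX$ is immediate, or (b) check directly that a smooth atlas of $[\Gamma \backslash Y]$ pulls back under $\Psi$ to a smooth atlas of $\XX$ and that descent data transport accordingly. Option (a) is cleaner and is what I would write up. A secondary, minor point is to make sure the hypotheses match: Corollary \ref{globalquotient} requires $g \geq 1$ or $m > 1$ (and $r := r_1 = \cdots = r_m$), so the statement of this corollary inherits exactly those hypotheses, which should be stated explicitly (here $\XX$ is as in Corollary \ref{globalquotient}, so this is automatic).
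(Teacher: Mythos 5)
Your proposal is correct and follows essentially the same route as the paper, which simply cites Proposition \ref{stackGamma} (together with the identification $\XX \arsim [\Gamma \backslash Y]$ from Corollary \ref{globalquotient}) and declares the result immediate. The extra care you take in justifying that an equivalence of stacks induces an equivalence of $\Bun_G$ categories, via the $\HHom_\C(-, BG)$ description, is exactly the right way to make the paper's ``immediately'' precise.
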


We will now restrict to the case where $X$ is a smooth projective curve, where we can give something of a refinement of this equivalence.   Let $x \in X, y \in Y$, and let $\OO_x, \OO_y$ be the respective local rings, and $\widehat{\OO}_x, \widehat{\OO}_y$ their completions, and $\K_x, \K_y$ the respective quotient fields.  As a matter of notation, we will set
\begin{align*}
\DD_x & := \Spec \, \widehat{\OO}_x, & \DD_x^\times & := \Spec \, \K_x, & \DD_y & := \Spec \, \widehat{\OO}_y, & \DD_y^\times & := \Spec \, \K_y.
\end{align*}
One has a Cartesian diagram
\begin{align} \label{Xx}
\vcenter{ \commsq{ \DD_x^\times }{ X \setminus x }{ \DD_x }{ X. }{}{}{}{} }
\end{align}

The local type of a $(\Gamma, G)$-bundle is defined in \cite[\S2.2]{BalajiSeshadri2012} as follows.  Let $E$ be a $(\Gamma, G)$-bundle over $Y$.  Then for each $y \in p^{-1}(Z)$, $E|_{\DD_y}$ is a $(\Gamma_y, G)$-bundle and this is defined by a representation $\rho_y : \Gamma_y \to G$.  Let $\tau_y$ denote the equivalence class of the representation $\rho_y$.  Then the \emph{local type of $E$} is defined as
\begin{align*}
\tau := \{ \tau_y \, : \, y \in p^{-1}(Z) \}.
\end{align*}
We let $\Bun_{\Gamma, G}^\tau Y$ denote the stack of $(\Gamma, G)$-bundles of local type $\tau$.

Because of Corollary \ref{GammaGcurve}, there should be a well-defined notion of a local type for a $G$-bundle over $\XX$.  Fix $x \in Z$ and let $y_1, y_2 \in p^{-1}(x)$.  Then there exists $\gamma \in \Gamma$ such that conjugation by $\gamma$ yields an isomorphism $\Gamma_{y_1} \arsim \Gamma_{y_2}$; since $\Gamma_{y_1}, \Gamma_{y_2}$ are abelian, this isomorphism is in fact independent of the choice of $\gamma$.  Thus, it is possible to choose isomorphisms $c_y : \mmu_r \arsim \Gamma_y$ for each $y \in p^{-1}(x)$ such that for all $y_1, y_2 \in p^{-1}(x)$, the diagram
\begin{align*}
\ucommtri{ \mmu_r }{ \Gamma_{y_1} }{ \Gamma_{y_2} }{ c_{y_1} }{ c_{y_2} }{}
\end{align*}
commutes.  This makes each $E|_{\DD_y}$ a $(\mmu_r, G)$-bundle and if $\gamma$ is as above, it also yields an isomorphism of $E|_{\DD_{y_1}} \arsim E|_{\DD_{y_2}}$ as $(\mmu_r, G)$-bundles, and hence the local representations $\rho_{y_1} \circ c_{y_1}, \rho_{y_2} \circ c_{y_2} : \mmu_r \to G$ are equivalent.

Observe that $\DD_x \times_X Y \cong \coprod_{y \in p^{-1}(x)} \DD_y$, so it follows that
\begin{align*}
\DD_x \times_X \XX \cong \DD_x \times_X [\Gamma \backslash Y] \cong [ \Gamma \backslash \DD_x \times_X Y] \cong \left[\Gamma \bigg\backslash \coprod_{y \in p^{-1}(x)} \DD_y \right] \cong [ \Gamma_y \backslash \DD_y ] \cong [ \mmu_r \backslash \DD_y ],
\end{align*}
where, in the last two expressions, $y \in p^{-1}(x)$ is any choice of preimage.

Now, given a $G$-bundle $\E$ on $\XX$, $\E|_{\DD_x \times_X \XX}$ is a $(\mmu_r, G)$-bundle over $\DD_y$.  This may be identified with the restriction of the associated $(\Gamma, G)$-bundle on $Y$ restricted to $\DD_y$ for some $y \in p^{-1}(x)$.  There is thus a well-defined equivalence class of a homomorphism $\mmu_r \to G$, which we denote by $\tau_x$ and call the \emph{local type of $\E$ at $x$}.  We define the \emph{local type of $\E$} to be $\tau := \{ \tau_x \, : \, x \in Z \}$.  We will denote by $\Bun_G^\tau \XX$ the stack of $G$-bundle over $\XX$ of local type $\tau$.

\begin{prop} \label{GammaGlocaltype}
The equivalence of Corollary \ref{GammaGcurve} restricts to equivalences
\begin{align*}
\Bun_G^\tau \XX \arsim \Bun_{\Gamma, G}^\tau Y
\end{align*}
for each local type $\tau$.
\end{prop}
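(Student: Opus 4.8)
The plan is to show that the equivalence $\Bun_G\XX \arsim \Bun_{\Gamma,G}Y$ furnished by Corollary \ref{GammaGcurve} (which factors as the composite of Proposition \ref{globalquotientstack} and Proposition \ref{stackGamma}) is \emph{local-type preserving in both directions}. Since an equivalence of categories restricts to an equivalence on full subcategories defined by any property stable under the equivalence, it suffices to check that a $G$-bundle $\E$ on $\XX$ has local type $\tau$ if and only if the corresponding $(\Gamma,G)$-bundle $E$ on $Y$ has local type $\tau$, where both local types are regarded as collections of equivalence classes of homomorphisms $\mmu_r \to G$ indexed by $Z$. The key point is that both notions of local type were \emph{defined} by restricting to formal disks: the local type of $E$ at $y \in p^{-1}(x)$ comes from $E|_{\DD_y}$ as a $(\Gamma_y,G)$-bundle, and the local type of $\E$ at $x$ comes from $\E|_{\DD_x\times_X\XX}$, which we identified in the discussion preceding the proposition with $[\mmu_r\backslash\DD_y]$ via the chain of isomorphisms
\begin{align*}
\DD_x\times_X\XX \cong [\Gamma\backslash \DD_x\times_X Y] \cong [\mmu_r\backslash\DD_y].
\end{align*}

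First I would record that the equivalence of Corollary \ref{GammaGcurve} is compatible with restriction to $\DD_x$, i.e.\ that the square relating $\Bun_G\XX \to \Bun_{\Gamma,G}Y$ and the restriction functors $\Bun_G\XX \to \Bun_G(\DD_x\times_X\XX)$ and $\Bun_{\Gamma,G}Y \to \Bun_{\Gamma,G}(\DD_x\times_X Y) = \prod_{y\in p^{-1}(x)}\Bun_{\Gamma_y,G}\DD_y$ commutes up to natural isomorphism. This follows from the fact that both Proposition \ref{globalquotientstack} and Proposition \ref{stackGamma} are constructed from universal properties (geometric quotients, descent along torsors, fibre products) that are stable under base change along $\DD_x \to X$; concretely one uses $\DD_x\times_X[\Gamma\backslash Y]\cong[\Gamma\backslash(\DD_x\times_X Y)]$ and $\DD_x\times_X Y \cong \coprod_y \DD_y$, exactly the isomorphisms already displayed before the statement. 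Second, under this compatibility, the $(\mmu_r,G)$-bundle $\E|_{\DD_x\times_X\XX}$ corresponds — after the identification $[\mmu_r\backslash\DD_y]\cong\DD_x\times_X\XX$ and a fixed choice of $c_y:\mmu_r\arsim\Gamma_y$ — precisely to $E|_{\DD_y}$ viewed as a $(\mmu_r,G)$-bundle via $c_y$. Third, I would invoke the observation, already made in the paragraph preceding the proposition, that the equivalence class of the representation $\rho_y\circ c_y:\mmu_r\to G$ is independent of the choice of $y\in p^{-1}(x)$ and of $c_y$ (because the $\Gamma_y$ are abelian and any two are conjugate in $\Gamma$), so that $\tau_x$ is well defined and agrees with the class attached to $\tau_y$. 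Concatenating these three points gives $\tau_x = [\rho_y\circ c_y]$ on one side and $\tau_y = [\rho_y]$ on the other, matching under the fixed bijection between $\mmu_r$-conjugacy data and $\Gamma_y$-conjugacy data.

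I expect the main obstacle to be the first step — verifying that the equivalence of Corollary \ref{GammaGcurve} genuinely commutes with restriction to the formal disk. One subtlety is that $\DD_x = \Spec\widehat{\OO}_x$ is not of finite type, so one must be slightly careful that the quotient-stack and descent constructions used in Propositions \ref{globalquotientstack} and \ref{stackGamma} still behave well after this base change; since $\Gamma$ is finite and $\DD_x\times_X Y \to \DD_x$ is finite étale (away from $x$) and in any case a $\Gamma$-torsor of the type considered, this is not a real problem, but it needs a sentence. A second, more bookkeeping-type subtlety is keeping track of how the chosen isomorphisms $c_y$ interact with the identification $\DD_x\times_X\XX\cong[\mmu_r\backslash\DD_y]$: one must confirm that changing $c_y$ or $y$ changes the resulting homomorphism $\mmu_r\to G$ only by conjugation and composition with an automorphism of $\mmu_r$ induced by the canonical isomorphisms among the $\Gamma_y$, which is exactly the ambiguity already quotiented out in the definition of $\tau_x$. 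Once these points are in place, the restriction of the equivalence to the subcategories $\Bun_G^\tau\XX$ and $\Bun_{\Gamma,G}^\tau Y$ is immediate, since full subcategories cut out by an isomorphism-invariant condition are automatically matched by any equivalence.
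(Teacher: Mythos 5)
Your proposal is correct and follows the same route the paper takes: the paper gives no separate proof of this proposition, precisely because the local type of a $G$-bundle on $\XX$ at $x$ is \emph{defined} (in the paragraph preceding the statement) by passing to the associated $(\Gamma,G)$-bundle restricted to $\DD_y$, so the equivalence preserves local types essentially by construction. Your write-up simply makes explicit the compatibility-with-restriction and well-definedness checks that the paper leaves implicit.
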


\begin{rmk}
The local type of a $G$-bundle $\E$ on $\XX$ is independent of our realization of $\XX$ as a quotient stack $[\Gamma \backslash Y]$.  If we fix $x \in Z$ and set $B := \widehat{\OO}_x [t]/(t^r - z) \cong \C[\![ t ]\!]$, where $z \in \widehat{\OO}_x$ is a parameter at $x$, we observe that $B$ admits a $\mmu_r$-action for which $\widehat{\OO}_x$ is the ring of invariants and if $\DD_{\hat{x}} := \Spec \, B$, then we have an abstract isomorphism
\begin{align*}
\DD_x \times_X \XX \cong [\mmu_r \backslash \DD_{\hat{x}} ].
\end{align*}
Then a $\E$ restricts to a $G$-bundle on $\DD_x \times_X \XX$ and hence corresponds to a $(\mmu_r, G)$-bundle on $\DD_{\hat{x}}$, which determines the local type.
\end{rmk}

\section{Connections On Vector Bundles over a Root Stack} \label{Connections}

In this section, we will assume $X$ to be a smooth irreducible curve over $\C$.  Let $Z \subseteq X$ be a reduced divisor.  Let $s \in H^0(X, \OO_X(Z))$ be a section defining $Z$ and fix $r \in \N$.  We will let $\XX = \XX_{ \OO_X(Z), s, r }$ be the associated root stack.

\subsection{Parabolic Connections}

Suppose $E_*$ is a rank $n$ parabolic vector bundle over $X$ given as a filtered sheaf as in (\ref{sheaffiltration}).  It is easy to recover the parabolic structure on the underlying vector bundle $E$ in the original sense of \cite{MS} in terms of a weighted flag in the fibre $E_x$ for each $x \in \supp \, Z$.  Given the filtration (\ref{sheaffiltration}), we take the images of the fibres of the $E_i$ in $E_x$ to get a flag
\begin{align} \label{flag}
E_x = E_{x,1} \supset E_{x,2} \supset \cdots \supset E_{x,k} \supset E_{x,k+1} = \{ 0 \},
\end{align}
and the weight $\alpha_i$ attached to $E_{x,i}$ is the largest $\alpha$ such that $E_{x,i} = i_{\alpha,0}( (E_\alpha)_x)$.

Let $D$ be a connection on $E$ with (logarithmic) simple poles at $Z$.  If $x \in \supp \, Z$ then the residue $\Res_x D$ is a well-defined endomorphism of $E_x$.  We say that $D$ is a \emph{parabolic connection} if for each $x \in \supp \, Z$, 
\begin{align} \label{parcxn}
\Res_x D (E_i) & \subseteq E_{i+1}, & \text{ and } & & \Res_x D|_{E_i/E_{i+1}} & = \alpha_i \1_{E_i/E_{i+1}},
\end{align}
for $1 \leq i \leq k$ \cite[\S2.2]{BiswasLogares2011}.

We will use the following \cite[Lemma 4.2]{BiswasLogares2011}.

\begin{lem} \label{parlineconnection}
A parabolic line bundle $L_*$ admits a connection if and only if $\pardeg \, L_* = 0$.
\end{lem}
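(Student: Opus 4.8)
The plan is to prove the statement in two directions, reducing the "if" direction to the Atiyah-sequence criterion (Lemma \ref{Atiyahcondition}) and the "only if" direction to the theory of degrees (Theorem \ref{Bornedeg}) together with Lemma \ref{parlineconnection}.

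For the forward direction, suppose $L_*$ is a parabolic line bundle that admits a connection, say $D$ is a parabolic connection on the underlying line bundle $L$ with logarithmic poles along $Z$ and residues prescribed by the parabolic weights $\alpha_x$ at each $x \in \supp Z$. First I would compute the ordinary degree: because $D$ has a logarithmic pole at each $x$, the residue theorem gives $\deg L = -\sum_{x \in \supp Z} \Res_x D = -\sum_{x} \alpha_x$, where I use that on a line bundle the residue is just the scalar weight $\alpha_x$ (the flag is trivial, so (\ref{parcxn}) reads $\Res_x D = \alpha_x \1$). The parabolic degree of $L_*$ is by definition $\deg L + \sum_x \alpha_x$, so this vanishes. (Alternatively, one can invoke Theorem \ref{Bornedeg} after identifying the parabolic line bundle with a line bundle $\mathcal{L}$ on $\XX$ and observing that a connection on $\mathcal{L}$ forces $\deg_\XX \mathcal{L} = 0$ via the first Chern class pairing, since $\mathcal{L}$ carries a flat structure \'etale-locally.)

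For the reverse direction, suppose $\pardeg L_* = 0$. Writing $L_* $ as a filtered line bundle $L = L_0 \supset L_1 \cong L(-Z)$ with a single weight $\alpha_x \in [0,1)$ at each $x \in \supp Z$, the condition $\pardeg L_* = 0$ means $\deg L + \sum_x \alpha_x = 0$. I would then construct a logarithmic connection on $L$ with residue $\alpha_x$ at each $x$. The obstruction to the existence of such a connection is precisely $\deg L + \sum_x \alpha_x \in \C$ (this is the content of the degree/residue constraint for logarithmic connections on a line bundle over a curve: a logarithmic connection on $L$ with prescribed residues $\lambda_x$ exists if and only if $\deg L + \sum_x \lambda_x = 0$, which one sees by choosing any meromorphic connection and correcting it by a global meromorphic $1$-form, using that $H^1(X, \OO_X) \to H^1(X, \Omega^1_X(\log Z))$ and the residue map fit into the standard exact sequence). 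Since this quantity is $0$ by hypothesis, such a $D$ exists, and by construction its residues satisfy (\ref{parcxn}) for the line bundle $L_*$, so $D$ is a parabolic connection; hence $L_*$ admits a connection.

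The main obstacle is the careful bookkeeping in the reverse direction: one must verify that the cohomological obstruction to a logarithmic connection with the prescribed scalar residues is exactly the parabolic degree, and in particular that one may always take the residues to be the real-number weights $\alpha_x$ themselves (rather than some shift by integers). This is where the hypothesis $0 \le \alpha_x < 1$ and the precise definition of $\pardeg$ are used: shifting a weight by an integer changes $L$ by a twist by $\OO_X(\pm x)$ and changes $\deg L$ compensatingly, so the vanishing of $\pardeg L_*$ is exactly the condition that makes the residue constraint solvable with the canonical choice of residues. I expect the rest — the residue theorem computation and the identification of the obstruction via the exponential/logarithmic exact sequences — to be routine, and indeed this is why the proof is quoted from \cite[Lemma 4.2]{BiswasLogares2011} rather than reproduced.
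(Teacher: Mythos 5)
Your argument is correct and is essentially the proof of the source the paper cites: the paper does not prove this lemma itself but imports it from \cite[Lemma 4.2]{BiswasLogares2011}, where the two directions are handled exactly as you do --- the residue formula $\deg L = -\sum_{x} \Res_x D$ for a logarithmic connection on a line bundle gives the ``only if'' direction, and the classical fact that a logarithmic connection with prescribed residues $\lambda_x$ exists if and only if $\deg L + \sum_x \lambda_x = 0$ gives the ``if'' direction. One caution: your parenthetical alternative for the forward direction via Theorem \ref{Bornedeg} and the degree of the corresponding line bundle on $\XX$ would be circular in this paper's logical order, since Corollary \ref{linecxnroot} is itself deduced from the present lemma; your main argument, however, does not rely on it.
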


\subsection{Connections on the Root Stack and Parabolic Connections}

\begin{prop} \label{rootstackconnection}
The category of vector bundles with connections on $\XX$ is equivalent to the category of parabolic vector bundles with parabolic connections on $X$.
\end{prop}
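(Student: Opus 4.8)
The plan is to bootstrap off Borne's equivalence (Theorem \ref{BorneVB}): on vector bundles alone we already have $\Vect(\XX)\cong\ParVect_{Z,r}(X)$, $\V\mapsto E_*$ with $E_{i/r}=\pi_*(\V\otimes_{\OO_\XX}\NN^{\otimes -i})$, so all that remains is to check that a connection on $\V$ (in the sense of Section \ref{coherentsheaves}) corresponds under this equivalence exactly to a parabolic connection on $E_*$, and that morphisms intertwining connections correspond. Both notions are local on $X$; and since $\pi:\XX\to X$ restricts to an isomorphism over $X\setminus Z$, over that open set a connection on $\V$ is literally a connection on the underlying bundle $E=\pi_*\V=E_0$. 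So the entire content of the statement is concentrated at the points of $Z$, and I would reduce immediately to analysing the formal disc around a point $x\in Z$.

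Next I would set up the local model. By Example \ref{affinerootstack} (or the discussion preceding Proposition \ref{GammaGlocaltype}), $\DD_x\times_X\XX\cong[\mmu_r\backslash\DD_{\hat x}]$ with $\DD_{\hat x}=\Spec\C[\![t]\!]$ and $t^r=z$ a uniformizer at $x$, with $\mmu_r$ scaling $t$. Hence a vector bundle with connection on $\XX$ over this disc is the same thing as a $\mmu_r$-equivariant pair $(W,\nabla)$, where $W$ is a finite free $\C[\![t]\!]$-module and $\nabla:W\to W\otimes_{\C[\![t]\!]}\Omega^1_{\C[\![t]\!]/\C}$ is an honest (pole-free) $\mmu_r$-equivariant connection. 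Decomposing $W$ into $\mmu_r$-isotypic summands and picking an adapted basis $\{\epsilon_\alpha\}$ with $\epsilon_\alpha$ of weight $b_\alpha\in\{0,\dots,r-1\}$, equivariance of $\nabla$ forces the connection matrix $(A_{\beta\alpha}(t))\,dt$ to have the property that the $t$-exponents occurring in $A_{\beta\alpha}(t)$ all lie in one fixed residue class modulo $r$, namely the class of $b_\alpha-b_\beta-1$. Under Borne's identification the underlying bundle $E_0=\pi_*\V$ is, near $x$, free on the invariant generators $\eta_\alpha:=t^{c_\alpha}\epsilon_\alpha$ with $c_\alpha\equiv -b_\alpha\pmod r$ in $\{0,\dots,r-1\}$, and the parabolic flag at $x$ is the one filtered by the values of the $c_\alpha$, with parabolic weight $c_\alpha/r$ on the piece indexed by $\alpha$.

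The heart of the argument is then a direct computation. Rewriting $\nabla\eta_\alpha$ in the basis $\{\eta_\beta\}$ and substituting $dt=\tfrac1r\,z^{-1}t\,dz$, the term $c_\alpha t^{c_\alpha-1}\epsilon_\alpha\,dt$ contributes an ``anomalous'' summand $\tfrac{c_\alpha}{r}\eta_\alpha\,\tfrac{dz}{z}$, while the term coming from $A_{\beta\alpha}$ becomes $\tfrac1r\,t^{c_\alpha-c_\beta+1}A_{\beta\alpha}(t)\,\eta_\beta\,\tfrac{dz}{z}$; the exponent constraint above, together with $c_\gamma\equiv-b_\gamma$, is exactly what makes $t^{c_\alpha-c_\beta+1}A_{\beta\alpha}(t)$ a power series in $z$, so $\nabla$ descends to a logarithmic connection $D=d+\big(\tfrac1r C(z)\big)\tfrac{dz}{z}$ on $E_0$ at $x$. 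Its residue $\Res_x D=\tfrac1r C(0)$ has $\eta_\beta$-component of $\Res_x D\,\eta_\alpha$ equal to $\tfrac1r$ times the coefficient of $t^{\,c_\beta-c_\alpha-1}$ in $A_{\beta\alpha}(t)$; since $A_{\beta\alpha}$ has no negative exponents, this vanishes unless $c_\beta>c_\alpha$, while the diagonal contribution is $\tfrac{c_\alpha}{r}$. Thus $\Res_x D$ preserves the $c$-flag and acts on each graded piece by the scalar equal to the corresponding parabolic weight — precisely the parabolic connection condition (\ref{parcxn}). Running the computation backwards, the $A_{\beta\alpha}$ are uniquely reconstructed from $C(z)$ and the condition (\ref{parcxn}) is exactly what guarantees a (unique) lift to a pole-free $\mmu_r$-equivariant connection upstairs.

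With the local equivalence in hand I would assemble the global statement: a connection on $\V$ is the same as a connection on $E_0$ over $X\setminus Z$ which near each $x\in Z$ extends to a logarithmic connection satisfying (\ref{parcxn}), i.e.\ exactly a parabolic connection on $E_*$; and a morphism of bundles with connection on $\XX$ corresponds, by Theorem \ref{BorneVB} plus the same local check, to a morphism of parabolic bundles intertwining the parabolic connections. Full faithfulness, essential surjectivity and functoriality then all follow from the corresponding properties of Borne's functor. The main obstacle is exactly the local computation at $Z$: one must fix the $\mmu_r$-weight conventions for $\NN$ and the equivariant structure precisely enough to see that, after $z=t^r$, a pole-free equivariant connection upstairs becomes a logarithmic connection downstairs whose residue automatically satisfies the flag-and-scalar condition (\ref{parcxn}), and conversely that this scalar condition is precisely what allows a logarithmic connection to be lifted back. (When $\XX$ is globally a quotient $[\Gamma\backslash Y]$ as in Corollary \ref{globalquotient}, one could alternatively phrase this step via $\Gamma$-equivariant connections on $Y$ and the classical orbifold dictionary, but the formal-local argument needs no hypothesis on $X$.)
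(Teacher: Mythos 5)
Your proposal is correct and takes essentially the same route as the paper's own proof: both reduce to a local chart where $\XX=[\mmu_r\backslash \Spec\,\C[t]/(t^r-z)]$, diagonalize the $\mmu_r$-action on an adapted frame, and perform the explicit change-of-basis computation $e_j\mapsto t^{p_j}e_j$ showing that an equivariant pole-free connection upstairs descends to a logarithmic connection whose residue preserves the flag and acts by the weights $p_j/r$ on the graded pieces, and conversely. The only (immaterial) differences are that you work formally-locally at $x$ rather than Zariski-locally and use a slightly different sign convention for the isotypic weights, which you correctly flag as the point needing care.
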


\begin{proof}
Suppose we are given a rank $n$ vector bundle and connection $(\V, \nabla)$ on $\XX$.  We want to show that $\nabla$ induces a parabolic connection on the corresponding parabolic vector bundle $E_*$ on $X$.  Since a connection is defined locally and the parabolicity condition (\ref{parcxn}) is also local, as in Example \ref{affinerootstack}, we may assume that $X = \Spec \, A$, that $\supp \, Z = \{ x \}$ is a single parabolic point defined by $s \in A$ whose image in $\OO_{X,x}$ is a parameter at $x$ and such that $ds$ is a local basis for $\Omega_{X/\C}^1$, so that if $B := A[t]/(t^r - s)$ and if $U := \Spec \, B$, then $\XX = [U/ \mmu_r]$.  Note also that $\Omega_{U/\C}^1$ has $dt$ as a local basis.  If $\gamma \in \mmu_r$ is a generator, we will assume that $\gamma \cdot t = \zeta^{-1} t$ and similarly $\gamma \cdot dt = \zeta^{-1} \, dt$.

In this case, $\V$ is defined by a projective module over $B$ with a compatible 
$\mmu_r$-action, and $\nabla$ commutes with this action.  By shrinking as necessary, we may assume that the module is free over $U$, say with basis $e = \{ e_1, \cdots, e_n \}$, and the $\mmu_r$-action is appropriately diagonalized \cite[Proposition 3.15]{Borne} so that 
\begin{align*}
\gamma \cdot e_j = \zeta^{p_j} e_j,
\end{align*}
for some $p_j$ which satisfy $0 \leq p_n \leq \cdots \leq p_1 \leq r-1$.  Take $1 
\leq j_k < j_{k-1} < \cdots < j_1 = n$ such that if $ j_{i+1} + 1 \leq j \leq j_i$, then $p_j = p_{j_i}$; set $m_i := p_{j_i}$ and $\alpha_i := m_i/r$.  The $\mmu_r$-invariants of the submodule generated by $e_j$ is generated by $t^{p_j} e_j$.  Hence $\pi_* \V$ has a basis $f = \{ f_1 := t^{p_1} e_1, \cdots, f_n := t^{p_n} e_n \}$ or
\begin{align*}
& f_1 = t^{m_k} e_1, \cdots, f_{j_k} = t^{m_k} e_{j_k}, f_{j_k + 1} = t^{m_{k-1}} 
e_{j_k + 1}, \cdots, f_{j_{k-1}} = t^{m_{k-1}} e_{j_{k-1}}, \cdots, \\
& f_{j_2 + 1} = t^{m_1} e_{j_2 + 1}, \cdots, f_n = t^{m_1} e_n.
\end{align*}
More generally, if $m_{i-1} + 1 \leq l \leq m_i$, then $\pi_*( \V \otimes_{\OO_\XX} \NN^{\otimes -l})$ is spanned by
\begin{align*}
f_1, \cdots, f_{j_i}, s f_{j_i+1}, \cdots, s f_n.
\end{align*}
Then the subspace $V_i$ of $V_x$ is spanned by $f_1(x), \cdots, f_{j_i}(x)$.  
This describes the filtration (\ref{flag}).

Now, suppose that, with respect to the basis $e$, $\nabla$ has the connection matrix $\omega = (\omega_{ij}) dt$, so that
\begin{align*}
\nabla e_j = \sum_{i=1}^n \omega_{ij} e_i \, dt.
\end{align*}
Then comparing the two expressions
\begin{align*}
\nabla \gamma \cdot e_j & = \zeta^{p_j} \sum_{i=1}^n \omega_{ij} e_i \, dt, & \gamma \cdot \nabla e_j & = \sum_{i=1}^n \zeta^{p_i -1} (\gamma \cdot \omega_{ij}) e_i \, dt,
\end{align*}
we find that $\gamma \cdot \omega_{ij} = \zeta^{p_j - p_i + 1} \omega_{ij}$.  Hence $\omega_{ij}$ is of the form
\begin{align*}
\omega_{ij} = \begin{cases}  t^{p_i - p_j - 1} \nu_{ij} & \text{ if } p_i > p_j \\ st^{p_i - p_j - 1} \nu_{ij} & \text{ if } p_i \leq p_j, \end{cases}
\end{align*}
for some $\nu_{ij} \in A$.

The change of basis matrix, from $e$ to $f$, is $g = \diag( t^{p_1}, \cdots, 
t^{p_n})$ and so the connection matrix with respect to $f$ is $g^{-1} \omega g + g^{-1} \, dg$, the $(i,j)$-entry of which is 
\begin{align*}
\begin{cases}  \nu_{ij} \frac{dt}{t} = \frac{1}{r} \nu_{ij} \frac{ds}{s} & \text{ if } p_i > p_j \\ \nu_{ij} \frac{ s \, dt }{t} + \delta_{ij} p_i \frac{dt}{t} = \frac{1}{r} ( s \nu_{ij} + \delta_{ij} p_i) \frac{ds}{s} & \text{ if } p_i \leq p_j. \end{cases}
\end{align*}
One sees immediately that this gives a well-defined logarithmic connection $D$ on $E_*$ and yields the following expression for the residue at $x$
\begin{align*}
(\Res_x D)_{ij} = \begin{cases}  \frac{1}{r} \nu_{ij}(x) & \text{ if } p_i > p_j \\  \delta_{ij} \frac{p_i}{r} & \text{ if } p_i \leq p_j. \end{cases}
\end{align*}
From this, it is straightforward to verify that $D$ is, in fact, a parabolic connection.

In the other direction, suppose we are given a parabolic connection $(E_*, D)$.  
Let $f_1, \cdots, f_n$ be a local frame and suppose $V_x$ has a flag (\ref{flag}) with weights $\alpha_i = m_i/r, 1 \leq i \leq k$.  Then the corresponding bundle on $\XX$ is represented over $U$ by the free $B$-module with basis
\begin{align*}
& e_1 := f_1 \otimes t^{-m_k}, \cdots, e_{j_k} := f_{j_k} \otimes t^{-m_k}, 
e_{j_k + 1} := f_{j_k + 1} \otimes t^{-m_{k-1}}, \cdots, e_{j_2} := f_{j_2} 
\otimes t^{-m_2}, \\
& e_{j_2 + 1} := f_{j_2 + 1} \otimes t^{-m_1}, \cdots, e_n := f_n \otimes 
t^{-m_1}.  
\end{align*}
Now, reversing the argument above, we see without difficulty that the induced connection on the $B$-module has no poles and is compatible with the $\mmu_r$-action.  Hence we obtain a connection on $\XX$.

Observe that in the above how we go from a connection on $\XX$ to a parabolic connection and back is essentially via a ``change of basis'' operation.  When realized as such, it is clear that the two operations are inverse to each other.

A morphism $\varphi : (\V, \nabla^\V) \to (\WW, \nabla^\WW)$ is a morphism $\varphi : \V \to \WW$ which commutes with the respective connections, i.e.\ $\nabla^\WW \circ \varphi = ( \varphi \otimes \1_{\Omega_\XX^1}) \circ \nabla^\V$.  In local frames, this means that the matrix for $\varphi$ satisfies the same relation with the respective connection matrices.  Let $(E_*, D^E), (F_*, D^F)$ be the corresponding parabolic connections.  Then, as we saw, the connection matrices for $D^E$ and $D^F$ are obtained by changes of basis on each of $\V$ and $\WW$ from the matrices for $\nabla^\V$ and $\nabla^\WW$; the matrix for $\pi_* \varphi$ will be obtained from these same changes of basis, so the commutation property will be preserved and we get a morphism of connections.  Again, the process is reversible.
\end{proof}

The following is immediate from the Proposition, Lemma \ref{parlineconnection} and Theorem \ref{Bornedeg}.

\begin{cor} \label{linecxnroot}
A line bundle $\mathcal{L}$ on $\XX$ admits a connection if and only if $\deg_\XX \, \mathcal{L} = 0$.
\end{cor}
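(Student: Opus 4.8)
The plan is to deduce Corollary \ref{linecxnroot} directly from the three results it is said to follow from, with essentially no new computation. Given a line bundle $\mathcal{L}$ on $\XX$, Theorem \ref{BorneVB} (applied to $\V = \mathcal{L}$, of rank one) produces a parabolic line bundle $L_*$ over $X$, and this assignment is an equivalence of categories between $\Vect(\XX)$ in rank one and $\ParVect_{\supp Z, r}(X)$ in rank one. First I would invoke Proposition \ref{rootstackconnection}: a connection $\nabla$ on $\mathcal{L}$ corresponds, under this same equivalence, to a parabolic connection $D$ on $L_*$, and conversely. Hence $\mathcal{L}$ admits a connection if and only if $L_*$ admits a parabolic connection.

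Next I would apply Lemma \ref{parlineconnection}, which states that the parabolic line bundle $L_*$ admits a (parabolic) connection if and only if $\pardeg \, L_* = 0$. Finally, Theorem \ref{Bornedeg} identifies $\pardeg \, L_*$ with $\deg_\XX \mathcal{L}$. Chaining these equivalences: $\mathcal{L}$ has a connection $\iff$ $L_*$ has a parabolic connection $\iff$ $\pardeg \, L_* = 0$ $\iff$ $\deg_\XX \mathcal{L} = 0$, which is exactly the claim. I would present this as a two- or three-sentence proof.

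The one point worth a word of care — and the only place a reader might object — is that Lemma \ref{parlineconnection} as quoted from \cite{BiswasLogares2011} speaks of ``a parabolic line bundle admitting a connection,'' and one should make sure this means a \emph{parabolic} connection in the sense of \eqref{parcxn}, so that it matches the objects produced by Proposition \ref{rootstackconnection}. This is indeed the intended reading (the parabolicity condition on a line bundle simply pins down the residue to be the weight), so no genuine obstacle arises; since $\pardeg$ of a parabolic line bundle is just $\deg E + \sum_{x} \alpha_x$ with the $\alpha_x$ the single weights, the content of Lemma \ref{parlineconnection} is the scalar-connection statement that such a logarithmic connection with prescribed residues exists precisely when the total degree vanishes. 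Thus the corollary is a formal consequence and the proof is a one-line composition of the cited results; I expect no real difficulty, only the bookkeeping of matching the notion of ``connection'' across the three sources.
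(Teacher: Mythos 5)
Your proof is correct and is exactly the paper's argument: the corollary is stated there as an immediate consequence of Proposition \ref{rootstackconnection}, Lemma \ref{parlineconnection} and Theorem \ref{Bornedeg}, chained in precisely the order you give. Your remark about matching the notion of ``connection'' across the cited results is a reasonable point of care but raises no actual issue.
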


\section{A Condition for the Existence of a Connection} \label{ConnectionCondition}

In this section $X$ will be an irreducible smooth complex projective curve, $Z 
\subseteq X$ a reduced divisor, $s \in H^0(X, \OO_X(Z))$ a section defining $Z$, 
$r \in \N$ and $\XX = \XX_{\OO_X(Z), s, r}$ the associated root stack.  We will 
further assume that either $g \geq 1$ or $m > 1$, so as to be able to apply 
Corollary \ref{globalquotient}.  As before, $G$ is a reductive complex algebraic 
group.

The following theorem is a generalization of a criterion of Weil and Atiyah,
\cite{At}, \cite{We}, for
the existence of a holomorphic connection on a holomorphic vector bundle over
a compact Riemann surface

\begin{theorem} \label{connectioncondition}
A principal $G$-bundle $\E$ on $\XX$ admits a connection if and only if for any reduction to a Levi factor $L$ of a parabolic subgroup of $G$, say $\F$ is an $L$-bundle with $\F \times^\iota G \cong \E$, and any character $\chi : L \to \C^\times$, the associated line bundle $\mathcal{M} = \mathcal{M}^\chi := \F \times^\chi \C$ satisfies
\begin{align*}
\deg_\XX \mathcal{M} = 0.
\end{align*}
\end{theorem}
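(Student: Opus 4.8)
The plan is to handle the two implications separately: the forward one is immediate from the lemmas already established, while the reverse one is a version over $\XX$ of the Weil--Atiyah criterion in the generalized form of Azad--Biswas \cite{AzadBiswas2002}, which I would deduce from the case of an ordinary curve by passing to a Galois cover.

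\emph{Necessity.} Suppose $\E$ admits a connection, let $L$ be a Levi factor of a parabolic subgroup of $G$, and let $\F$ be a reduction of $\E$ to $L$ with $\F\times^\iota G\cong\E$. By Lemma \ref{Levisplitting} the inclusion $\Lie L\hookrightarrow\Lie G$ admits an $L$-equivariant splitting, so $\F$ inherits a connection by Lemma \ref{splittingconnection}. Applying Lemma \ref{inducedconnection} to the character $\chi\colon L\to\C^\times$, viewed as a one-dimensional representation, then equips $\mathcal M=\F\times^\chi\C$ with a connection, whence $\deg_\XX\mathcal M=0$ by Corollary \ref{linecxnroot}.

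\emph{Sufficiency.} By Lemma \ref{Atiyahcondition} it suffices to split the Atiyah sequence
\[ 0 \to \ad\E \to \At\E \to T\XX \to 0 . \]
Using Corollary \ref{globalquotient} I would write $\XX\cong[\Gamma\backslash Y]$ for a Galois covering $p\colon Y\to X$ with finite group $\Gamma$, so that (Corollary \ref{GammaGcurve}) $\E$ corresponds to a $\Gamma$-equivariant $G$-bundle $E'$ on the smooth projective curve $Y$; a connection on $\E$ is then the same as a $\Gamma$-invariant connection on $E'$, and $H^0(\XX,\ad\E)=H^0(Y,\ad E')^\Gamma$. Because $\Gamma$ is finite and $\C$ has characteristic zero, averaging over $\Gamma$ shows that $E'$ carries a $\Gamma$-invariant connection as soon as it carries any connection, so $\E$ admits a connection if and only if $E'$ does. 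On $Y$ the criterion of \cite{AzadBiswas2002} (which reduces to the theorem of Weil and Atiyah for $G=\GL_n$) says that $E'$ admits a connection precisely when $\deg_Y(\F'\times^\chi\C)=0$ for every reduction $\F'$ of $E'$ to a Levi factor $L$ and every character $\chi$ of $L$. Now a reduction of $\E$ to $L$ over $\XX$ is exactly a $\Gamma$-equivariant reduction $\F'$ of $E'$ to $L$, and for the quotient morphism $\nu\colon Y\to\XX$ one has $\nu^*(\F\times^\chi\C)=\F'\times^\chi\C$ with $\deg_Y(\F'\times^\chi\C)=|\Gamma|\,\deg_\XX(\F\times^\chi\C)$; hence the hypothesis of the theorem is precisely the Azad--Biswas condition for $E'$ restricted to its $\Gamma$-equivariant Levi reductions.

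It then remains to show that this $\Gamma$-equivariant condition already forces $E'$ to admit a connection, and this is the step I expect to be the main obstacle. The obstruction to a connection on $E'$ is its Atiyah class, which by Serre duality on $Y$ and the self-duality of $\ad E'$ under an invariant form is a $\Gamma$-invariant linear functional on $H^0(Y,\ad E')$; as $\Gamma$ is finite over $\C$ it is nonzero if and only if it is nonzero on $H^0(Y,\ad E')^\Gamma=H^0(\XX,\ad\E)$. Feeding a $\Gamma$-invariant infinitesimal symmetry into the argument of \cite{AzadBiswas2002} then yields a $\Gamma$-equivariant reduction to a Levi factor on which some character line bundle has nonzero degree, contradicting the hypothesis. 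Carrying this out amounts to reconstructing, over the quotient stack (equivalently, $\Gamma$-equivariantly over $Y$), the two ingredients of the Azad--Biswas proof: the canonical Harder--Narasimhan and socle reductions of $E'$, which are unique and hence automatically $\Gamma$-equivariant, together with the strict positivity of the character degrees of the canonical reduction --- which is what rules out a destabilizing reduction once all the degrees in the hypothesis vanish; and the (poly)stable base case, in which one produces a unitary flat connection via the Narasimhan--Seshadri/Mehta--Seshadri--Ramanathan correspondence on $Y$, then averages and reassembles it, as in \cite[Lemma 2.2]{AzadBiswas2002}, into a connection on $E'$ and thence on $\E$.
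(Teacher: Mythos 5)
Your proposal follows essentially the same route as the paper: the necessity direction is word-for-word the paper's argument via Lemmata \ref{Levisplitting}, \ref{splittingconnection}, \ref{inducedconnection} and Corollary \ref{linecxnroot}, and for sufficiency the paper likewise passes to the Galois cover $Y$, identifies Levi reductions of $\E$ with $\Gamma$-invariant Levi reductions of the equivariant bundle on $Y$, and descends a $\Gamma$-invariant splitting of the Atiyah sequence. The step you correctly single out as the main obstacle --- that the degree condition on $\Gamma$-equivariant reductions alone forces a ($\Gamma$-invariant) connection --- is exactly what the paper outsources to \cite[Lemma 4.2]{BiswasPPB} together with an averaging argument from that same reference, and your sketch of how to prove it (canonicity of the Harder--Narasimhan and socle reductions, plus the polystable base case) is consistent with how that lemma is established.
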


\begin{proof}
Suppose first that $\E$ admits a connection and let $\F$ be a reduction of $G$ to a Levi subgroup $L$ and $\chi : L \to \C^\times$ a character.  Then by Lemmata \ref{Levisplitting}, \ref{splittingconnection} and \ref{inducedconnection}, $\mathcal{M}^\chi$ on $\XX$ admits a connection.  Hence $\deg_\XX \, \mathcal{M}^\chi = 0$ by Corollary \ref{linecxnroot}.

We now prove the converse.  Assume that $\E$ satisfies the condition of the Theorem.  We choose a Galois cover $p : Y \to X$ as in Corollary \ref{globalquotient}.  Then we obtain a surjective \'etale morphism $\p : Y \to \XX$, so that $Y$ is an atlas for $\XX$.  Corresponding to the morphism $\p$ is a $G$-bundle $\E_\p$ which, since $\XX = [\Gamma \backslash Y]$, comes with a compatible $\Gamma$-action.  A reduction of the structure group $\E$ to a Levi subgroup $H$ corresponds to a $\Gamma$-invariant reduction of $\E_\p$ to $H$, and conversely, so the hypotheses of the following Lemma are satisfied.

\begin{lem}
Let $E$ be a $\Gamma$-linearized principal $G$-bundle over $Y$ such that for every Levi subgroup $H \subseteq G$, every $\Gamma$-invariant holomorphic reduction $F$ of $E$ to $H$ and every character $\chi : H \to \C^\times$, one has
\begin{align*}
\deg F \times^\chi \C = 0.
\end{align*}
Then $E$ admits a $\Gamma$-invariant connection.
\end{lem}

Except for the $\Gamma$-invariance of the connection, this is the statement of \cite[Lemma 4.2]{BiswasPPB}; that the existence of one connection implies the existence of a $\Gamma$-invariant one is proved by an averaging argument on the previous page of the same paper.  

Now, the existence of a $\Gamma$-invariant connection on $\E_\p$ implies that there is a $\Gamma$-invariant splitting of the Atiyah sequence for $\E_\p$.  Since the question is now framed in terms of the existence of a section of an appropriate sheaf over $Y$, such a splitting descends to a splitting of the Atiyah sequence for $\E$ and we conclude by Lemma \ref{Atiyahcondition}.
\end{proof}

Let $\E$ be a principal $G$-bundle over $\XX$. Consider the short exact sequence
$$
0 \to \Omega^1_{\XX}\otimes \ad \, \E \to \Omega^1_{\XX}\otimes \At \, \E
\stackrel{\mu}{\to} \Omega^1_{\XX}\otimes T \XX = End(T \XX) \to 0
$$
obtained by tensoring the Atiyah sequence associated to
$\E$ with $\Omega^1_{\XX}$. It produces the exact sequence
$$
0 \to \Omega^1_{\XX}\otimes \ad \, \E \to \mu^{-1}(\text{Id}_{T \XX}\cdot
\OO_{\XX}) \stackrel{\mu}{\to} \text{Id}_{T \XX}\cdot
\OO_{\XX} \,=\, \OO_{\XX} \to 0\, .
$$
There is a natural bijective correspondence between the
splitting of this exact sequence and the above Atiyah sequence associated to
$\E$. In \cite{BiswasPPB}, connections on parabolic principal bundles were
defined to be the splittings of the exact sequence in the
parabolic context given by this exact sequence.

\section{Tensor Functors} \label{TensorFunctors}

Let $X$ be a scheme over an arbitrary field $k$, $G$ an affine group scheme over 
$k$ and $E$ a $G$-bundle over $X$.  Then the assignment taking a representation 
$\rho : G \to {\rm GL}(V)$ to the associated vector bundle $E \times^\rho V$ 
defines 
a functor $F_E : \Gmod \to \Vect \, X$, where $\Gmod$ is the category of 
finite-dimensional representations of $G$ and one observes that $F_E$ satisfies 
the following properties:
\begin{enumerate}
\item[(i)] $F_E$ is $k$-additive;
\item[(ii)] $F_E$ is a tensor functor in in the sense that it commutes with the 
formation of tensor products (in each of the respective categories), and with 
the natural isomorphisms of functors which give the associativity and 
commutativity of the tensor product in each category;
\item[(iii)] $F_E$ takes the trivial $1$-dimensional representation to the 
trivial line bundle;
\item[(iv)] $F_E$ takes an $n$-dimensional representation to a rank $n$ vector 
bundle.
\end{enumerate}
Following a Tannakian philosophy, M.V.\ Nori was able to see that any functor $\Gmod \to \Vect \, X$ satisfying these conditions in fact comes from a $G$-bundle over $X$ \cite[\S2]{Nori1976}.

The approach to generalizing the notion of a parabolic vector bundle to that of a parabolic principal bundle taken by \cite{BBN} is to view a $G$-bundle in this sense.  Thus, one defines a parabolic principal bundle as a functor $\Gmod \to \ParVect_{D,r}(X)$, for some $r \in \N$, which satisfies conditions (i)-(iv) above \cite[Definition 2.5]{BBN}.  One will recall that the original definition \cite{MS} of a parabolic structure (on a vector bundle over a smooth curve) consisted of a flag of subspaces of the fibre over a parabolic point $x$, together with a set of weights in $[0,1)$.  While a flag has a clear $G$-bundle analogue in terms of an element in a generalized flag manifold, it was much less obvious what the correct generalization for a set of weights should be.
The definition in \cite{BBN} was meant to overcome this difficulty.

We now see that the tensor functor approach to parabolic principal bundles coincides with our approach via root stacks.

\begin{prop}
The category of $G$-bundles on the root stack $\XX$ is equivalent to the category of tensor functors $\Gmod \to \ParVect_{D,r}(X)$.
\end{prop}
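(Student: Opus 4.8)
The plan is to exhibit mutually inverse functors between $\Bun_G\XX$ and the category of tensor functors $\Gmod \to \ParVect_{D,r}(X)$, building on Borne's equivalence $\Vect(\XX) \arsim \ParVect_{D,r}(X)$ (Theorem \ref{BorneVB}) and Nori's Tannakian characterization of principal bundles. First I would note that, by Remark \ref{associatedbundleBG} and the corollary to Lemma \ref{associatedbundleiso}, a $G$-bundle $\E$ on $\XX$ gives rise to a functor $F_\E : \Gmod \to \Vect(\XX)$ sending $\rho : G \to \GL(V)$ to $\E \times^\rho V$, and this functor satisfies Nori's axioms (i)--(iv): $\C$-additivity, compatibility with tensor products and the associativity/commutativity constraints, preservation of the trivial representation, and preservation of ranks. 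Composing $F_\E$ with Borne's equivalence $\Vect(\XX) \arsim \ParVect_{D,r}(X)$ — which is an equivalence of \emph{tensor} categories, so it preserves all of this structure — produces a tensor functor $\Gmod \to \ParVect_{D,r}(X)$ satisfying (i)--(iv), i.e.\ a parabolic principal bundle in the sense of \cite{BBN}.

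For the reverse direction, given a tensor functor $T : \Gmod \to \ParVect_{D,r}(X)$ satisfying (i)--(iv), I would compose with the quasi-inverse $\ParVect_{D,r}(X) \arsim \Vect(\XX)$ of Borne's equivalence to obtain a tensor functor $\Gmod \to \Vect(\XX)$ still satisfying (i)--(iv). The key step is then to apply Nori's reconstruction theorem \cite[\S2]{Nori1976} — or rather its evident generalization from schemes to the Deligne--Mumford stack $\XX$ — to recover a $G$-bundle on $\XX$ from such a functor. This requires checking that Nori's argument, which is essentially a descent/Tannakian construction of the total space as a relative $\Spec$ of an algebra object, goes through over $\XX$; since $\XX$ admits an \'etale atlas (indeed $Y \to \XX$ from Corollary \ref{globalquotient}, or locally the cover in Example \ref{affinerootstack}), one can either invoke Nori over an atlas and descend, or observe that $\Vect(\XX)$, $\Coh(\XX)$ and $\QCoh(\XX)$ form a well-behaved tensor category with the finiteness and exactness properties Nori's proof uses. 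Equivalently, by Lemma \ref{GbundleBG} a $G$-bundle on $\XX$ is a morphism $\XX \to BG$, and a tensor functor $\Gmod \to \Vect(\XX)$ is the same as a tensor functor to $\QCoh(\XX)$ of the required type, which by Tannaka duality for $BG$ is exactly such a morphism.

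Finally I would check that the two constructions are quasi-inverse and compatible with morphisms. In one direction this is Nori's theorem (the functor associated to the reconstructed bundle is the original one, up to canonical isomorphism); in the other, one uses that $\E \mapsto F_\E$ is faithful because $G \hookrightarrow \GL(V)$ for some faithful representation $V$, so $\E$ is determined by $\E \times^\rho V$ together with the tensor structure, and surjectivity on objects is precisely Nori's existence statement. A morphism of tensor functors $T \to T'$ — i.e.\ a natural transformation compatible with the tensor constraints — corresponds under Borne's equivalence to a natural transformation $F_\E \to F_{\E'}$, which by the Tannakian dictionary is an isomorphism of $G$-bundles $\E \arsim \E'$ (both categories being groupoids).

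The main obstacle I anticipate is making the generalization of Nori's theorem from schemes over a field to the root stack $\XX$ fully rigorous: Nori works with an affine-over-the-base construction, and on a stack one must either phrase this via descent along an \'etale atlas — taking care that the $G$-bundle and its $\beta_k$-gluing data descend, which uses that the tensor functor is globally defined on $\XX$ and hence compatible with restriction to atlases — or cite a stacky Tannakian duality statement (such as the one for morphisms to $BG$). Everything else is a matter of transporting structure through Borne's tensor equivalence and checking compatibilities that are automatic from functoriality.
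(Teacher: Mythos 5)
Your overall strategy coincides with the paper's: both directions go through Borne's tensor equivalence $\Vect\,\XX \arsim \ParVect_{D,r}(X)$, the forward direction is the associated-bundle construction, and the reverse direction rests on Nori's reconstruction theorem. The one place you diverge is in how the reconstructed $G$-bundle is produced on $\XX$, and here the obstacle you flag as the main difficulty --- proving a ``stacky'' version of Nori's theorem, or performing a descent of the reconstructed bundle and its gluing data --- is not actually needed. Recall that in Section \ref{principalbundles} a principal $G$-bundle on $\XX$ is \emph{defined} to be a family of $G$-bundles $\E_\f$ on scheme atlases $\f : U \to \XX$ together with compatible isomorphisms $\beta_k$. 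So given a tensor functor $F$, one composes with Borne's equivalence and then with the restriction functor $R_\f : \Vect\,\XX \to \Vect\,U$ for each \'etale atlas $\f$, and applies Nori's theorem \emph{over the scheme $U$} to the resulting functor $F_\f$ to get $\E_\f$; the isomorphisms $\beta_k : \E_\f \arsim k^*\E_\g$ are supplied by the canonicity of Nori's construction (his Proposition 2.9(a) applied to the triangle relating $F_\f$, $F_\g$ and $k^*$), and their canonical nature gives the cocycle condition (\ref{compatibilitycondition}). This is essentially the ``invoke Nori over an atlas'' option you mention, but with no descent step at all: the atlas-wise data \emph{is} the bundle on $\XX$. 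Your alternative suggestion via Tannaka duality for morphisms to $BG$ would also work but requires citing a result the paper does not need. With that substitution your argument matches the paper's proof; the remaining verifications (that the two constructions are quasi-inverse, checked at each $\f \in \Ob\,\XX$) are as you describe.
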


\begin{proof}
The first thing to recall is that the equivalence of $\Vect \, \XX$ and $\ParVect_{D,r} \, X$ is a tensor functor, so satisfies (ii)-(iv); it obviously satisfies (i), and it is clear that it preserves rank and that the trivial bundle on $\XX$ corresponds to the trivial parabolic vector bundle on $X$.  We will also note that if $\f : U \to \XX$ is an \'etale morphism, then the functor $R_\f : \Vect \, \XX \to \Vect \, U$ given by
\begin{align*}
\V \mapsto \V_\f
\end{align*}
also has the same properties, virtually by definition.

Suppose we are given a principal bundle $\E$ on the root stack $\XX$.  Then the associated vector bundle construction of Section \ref{associatedbundles} will give a functor $F_\E : \Gmod \to \Vect \, \XX$ satisfying the conditions above.  Composing with the equivalence $\Vect \, \XX \arsim \ParVect_{D,r}(\XX)$ give a parabolic principal bundle in the sense \cite{BBN}.

Suppose we are given a functor $F : \Gmod \to \ParVect_{D,r} X$ satisfying (i)-(vi) above.  Then given an \'etale morphism $\f : U \to \XX$, we may consider the composition
\begin{align*}
\Gmod \xrightarrow{F} \ParVect_{D,r} \, X \arsim \Vect \, \XX \xrightarrow{R_\f} \Vect \, U,
\end{align*}
which we will denote by $F_\f$.  This will satisfy (i)-(vi) and hence determines a principal bundle $\E_\F$ over $U$.  Now, given a diagram (\ref{overXX}), one obtains a diagram of categories and functors
\begin{align*}
\rcommtri{ \Gmod }{ \Vect \, V }{ \Vect \, U }{ F_\g }{ F_\f }{ k^* }
\end{align*}
In this situation, \cite[Proposition 2.9(a)]{Nori1976} provides canonical isomorphisms $\E_f \arsim k^* \E_\g$.  Because of the canonical nature of these isomorphisms, the compatibility condition (\ref{compatibilitycondition}) is satisfied.  Thus $F$ defines a principal $G$-bundle over $\XX$.  It is clear that these constructions are inverses of each other, as it is a question of seeing that this is the case at each $\f \in \Ob \, \XX$.
\end{proof}

\section{Parahoric Torsors} \label{ParahoricTorsors}

\subsection{Parahoric Subgroups}

We will assume that $G$ is semisimple and fix a maximal torus $T$ and a Borel subgroup $B$ containing $T$.  Let $A := \C[\![ z ]\!]$ be the ring of formal power series and $K := \C (\!( z )\!) = A[z^{-1}]$ its quotient field (the ring of formal Laurent series).  One has a quotient map $A \to \C$, which yields an evaluation map $\ev : G(A) \to G(\C)$.  The \emph{Iwahori subgroup} $\mathcal{I}$ of $G(K)$ is defined to be
\begin{align*}
\mathcal{I} := \ev^{-1}(B) = \ev^{-1}\big(B(\C) \big).
\end{align*}
A \emph{parahoric subgroup} $\PPP$ of $G(K)$ is one which contains a $G(K)$-conjugate of $\mathcal{I}$.  It is a theorem of \cite{BruhatTitsII} that any such subgroup $\PPP$ is the group of $A$-points for a uniquely defined smooth group scheme over $A$ which, at the risk of poor notation, we will also call $\PPP$.

Let $\g$ denote the Lie algebra of $G$, let $\Phi$ be the root system for $G$ (with respect to $T$) and for $\alpha \in \Phi$, let $\g_\alpha$ denote the corresponding root space and $U_\alpha \subseteq G$ the root group.  We will fix non-zero $x_\alpha \in \g_\alpha$.  

Let $\theta \in \t_\R = Y(T) \otimes_\Z \R$ and consider the subgroup
\begin{align} \label{Ptheta}
\PPP_\theta := \langle T(A), U_\alpha( z^{m_\alpha(\theta)} A ) \, : \, \alpha \in \Phi \rangle.
\end{align}
where $m_\alpha(\theta) = -\lfloor \alpha(\theta) \rfloor$ when we consider $\alpha$ as an element of $\t_\R^*$.  Such a subgroup is parahoric in the above sense and any parahoric subgroup containing $\mathcal{I}$ is of the form $\PPP_\theta$ for some $\theta$ (though such a $\theta$ is clearly not unique).  In what follows, we will typically take $\theta \in Y(T) \otimes_\Z \Q$ or, when $r \in \N$ is fixed, in $Y(T) \otimes_\Z \tfrac{1}{r} \Z$.

For such a $\theta$, \cite[\S2.2]{Boalch_Parahoric} gives the following description of the associated parahoric Lie algebra.  For $\lambda \in \R$, we will set
\begin{align*}
\g_\lambda^\theta := \{ \xi \in \g \, : \, [\theta, \xi] = \lambda \xi \}.
\end{align*} 
Note that $\g_0^\theta \subseteq \g$ is the centralizer subalgebra of $\theta$ and that $\t \subseteq \g_0^\theta$.
For $i \in \Z$, we set 
\begin{align*}
\g^\theta(i) \,:=\, 
\sum_{\lambda \geq -i } \g_\lambda^\theta
\end{align*}
Then we define
\begin{align*}
\wp_\theta := \left\{ \sum_{i \in \Z} \xi_i z^i \in \g(K) \, : \, \xi_i \in \g^\theta(i) \right\}.
\end{align*}
It is not hard to see that this is the same as
\begin{align*}
\wp_\theta = \t(A) \oplus \sum_{\alpha \in \Phi} \g_\alpha( z^{m_\alpha(\theta)} A ),
\end{align*}
which is what one would expect from the description in (\ref{Ptheta}).

An alternative description given in \cite{Boalch_Parahoric} is to consider the (finite-dimensional) vector spaces
\begin{align*}
\g(K)_\lambda^\theta := \left\{ \sum_{i \in \Z} \xi_i z^i \in \g(K) \, : \, \xi_i \in \g_{\lambda-i}^\theta \right\},
\end{align*}
for $\lambda \in \R$, which we will call the \emph{weight $\lambda$} piece of $\g(K)$.  Then $\wp_\theta$ is the sub-algebra with weights $\geq 0$.  In particular, the weight zero piece
\begin{align*}
\g(K)_0^\theta := \left\{ \sum_{i \in \Z} \xi_i z^i \in \g(K) \, : \, \xi_i \in \g_{-i}^\theta \right\},
\end{align*}
is a finite-dimensional sub-algebra of $\wp_\theta$ and is isomorphic to a reductive subalgebra of $\g$ containing $\t$.

\subsection{Parahoric Group Schemes and Torsors} \label{PGST}

Let $X$ be a smooth (irreducible) curve.  We use the notation of Section \ref{PBRS}.  A group scheme $\GG$ will be called a \emph{parahoric Bruhat--Tits group scheme} if there exists a finite set $Z \subseteq X$ such that for each $x \in Z$, there is some $\theta_x \in \t = Y(T) \otimes_\Z \R$ such that
\begin{align*}
\GG|_{X \setminus Z} & \cong ( X \setminus Z) \times G, & \text{ and } \GG|_{\DD_x } & \cong \PPP_{\theta_x},
\end{align*}
for each $x \in Z$.  If $\boldsymbol{\theta} := \{ \theta_x \, : \, x \in Z \}$, then this group scheme will be written $\GG_{\boldsymbol{\theta}}$.  The stack of $\GG_{\boldsymbol{\theta}}$-torsors over $X$ will be denoted
\begin{align*}
\Bun_{\GG_{\boldsymbol{\theta}}} X.
\end{align*}
For simplicity, we will only be considering the case when $Z = \{ x \}$ is a singleton, in which case we write $\theta$ for $\theta_x$ and for $\boldsymbol{\theta}$.

Let $X$ and $x \in X$ be as above.  Let $s \in H^0(X, \OO_X(x))$ be a section vanishing (only) at $x$ and fix $r \in \N$.  Let $\XX := \XX_{\OO_X(x), s, r}$ be the associated root stack.

Fix a local type $\tau$ for $G$-bundles over $\XX$ (i.e.\ an equivalence class of representations $\mmu_r \to G$) and choose a representative $\rho$.  We may assume that $\rho$ is the restriction of a cocharacter $\theta : \Gm(L) \to T(L)$ to $\mmu_r(L) \subseteq \Gm(L)$ so we may associate to $\tau$ an element
\begin{align*}
\theta = \theta_\tau \in Y\big( T(L) \big) = Y\big( T( \K_x) \big) \otimes_\Z \tfrac{1}{r} \Z.
\end{align*}

\begin{prop}
Assume that one of the following three conditions holds for $\XX_{(L,r,s)}$:
\begin{itemize}
\item $g \geq 1$,

\item $g = 0$, and the support of the divisor for $s$
has cardinality at least three, and

\item $g = 0$, and the divisor for $s$ is of the form $d(x+y)$,
where $x$ and $y$ are distinct points.
\end{itemize}
Then there is an equivalence of stacks
\begin{align*}
\Bun_G^\tau \XX \arsim \Bun_{\GG_\theta} X.
\end{align*}
\end{prop}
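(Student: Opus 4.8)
The plan is to obtain the equivalence as a composite of three equivalences, combining the results of the previous sections with the comparison between equivariant bundles and parahoric torsors established in \cite{BalajiSeshadri2012}.

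First I would note that the three stated conditions on $\XX$ are exactly those under which Corollary \ref{globalquotient} (equivalently, Theorem \ref{BNF}) applies: one may choose a Galois covering $p : Y \to X$ with group $\Gamma$, ramified only over $Z$ with ramification index $r$, together with an equivalence $\XX \arsim [\Gamma \backslash Y]$. Feeding this realization into Proposition \ref{GammaGlocaltype} yields an equivalence of stacks
\begin{align*}
\Bun_G^\tau \XX \arsim \Bun_{\Gamma, G}^\tau Y,
\end{align*}
where the local type on the right is the one computed from the isotropy representations $\rho_y$, $y \in p^{-1}(x)$, using the identifications $c_y : \mmu_r \arsim \Gamma_y$ fixed in Section \ref{PBRS}; by the discussion there these all belong to the class $\tau$, with common representative $\rho$.

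Next I would invoke the main comparison of \cite{BalajiSeshadri2012}: for a fixed local type, $\Bun_{\Gamma, G}^\tau Y$ is equivalent to the stack $\Bun_{\GG} X$ of torsors for the parahoric Bruhat--Tits group scheme $\GG$ determined by $\tau$ --- concretely, the group scheme which is $(X\setminus Z) \times G$ away from $Z$ and whose restriction to $\DD_x$ ($x \in Z$) is the scheme of $\mmu_r$-invariants of $G \times_X \DD_y$, where $\mmu_r = \Gamma_y$ acts on $G$ by conjugation through $\rho$. As with the earlier equivalences of stacks, one should check this in families over an arbitrary base $\C$-scheme $U$, but the relevant base-change identities (e.g.\ $[\Gamma \backslash Y \times U] \cong [\Gamma \backslash Y] \times U$, as in the proof of Proposition \ref{stackGamma}) make that part routine.

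It then remains to match $\GG$ with $\GG_\theta$, where $\theta = \theta_\tau$ is the cocharacter lifting $\rho$ fixed in the statement. Away from $Z$ both are the constant group scheme $G$, so this is a purely local assertion at $x$: writing $\DD_y \cong \Spec\,\C[\![t]\!]$ with $t^r = z$ and $\mmu_r$ acting by $t \mapsto \zeta^{-1} t$, a section $u_\alpha(c) \in U_\alpha(\C[\![t]\!])$ is $\mmu_r$-invariant for the twisted action precisely when the expansion of $c$ is supported in the residue class modulo $r$ dictated by $\langle\alpha,\theta\rangle$; rewriting this in the variable $z = t^r$ (after absorbing $\theta$ into the trivialization near $x$) produces exactly $U_\alpha(z^{m_\alpha(\theta)}A)$, while $T$ contributes $T(A)$, so that $\GG|_{\DD_x} \cong \PPP_\theta$ by the description (\ref{Ptheta}). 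Hence $\GG \cong \GG_\theta$, and composing the three equivalences gives the result. I expect this last local identification to be the main obstacle: the real work lies in pinning down the sign and shift conventions relating the diagonalized $\mmu_r$-action (of the kind appearing in the proof of Proposition \ref{rootstackconnection}) to the weight grading $\g = \bigoplus_\lambda \g_\lambda^\theta$ and the valuation filtration defining $\wp_\theta$, and in checking that the parahoric obtained is independent --- up to the appropriate $G(\K_x)$-conjugation --- of the auxiliary choices ($c_y$, the representative $\rho$, and the lift $\theta$, well-defined only modulo $Y(T)$). Once that is in place, the global statement follows formally by chaining Corollaries \ref{globalquotient} and \ref{GammaGcurve}, Proposition \ref{GammaGlocaltype}, and the theorem of \cite{BalajiSeshadri2012}.
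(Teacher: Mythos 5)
Your proposal is correct and follows essentially the same route as the paper: choose a Galois cover via Corollary \ref{globalquotient}, apply Proposition \ref{GammaGlocaltype} to get $\Bun_G^\tau \XX \arsim \Bun_{\Gamma,G}^\tau Y$, and then invoke the Balaji--Seshadri comparison (their Theorem 5.3.1) for $\Bun_{\Gamma,G}^\tau Y \arsim \Bun_{\GG_\theta} X$. The local identification of the invariant group scheme with $\PPP_\theta$ that you flag as the main obstacle is simply absorbed into the citation of \cite{BalajiSeshadri2012} in the paper's proof, so no additional argument is given there.
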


\begin{proof}
We may choose a Galois cover $p : Y \to X$ as in Corollary \ref{globalquotient} so that we have $\Bun_G^\tau \XX \cong \Bun_{\Gamma, G}^\tau Y$ from Proposition \ref{GammaGlocaltype}.  But $\Bun_{\Gamma, G}^\tau Y \cong \Bun_{\GG_\theta} X$, by \cite[Theorem 5.3.1]{BalajiSeshadri2012}.
\end{proof}

\subsection{Parahoric Connections}

\subsubsection{Local Connections:  The Residue Condition}

Fix $\theta \in \t_\R$ and consider the parahoric subgroup $\PPP_\theta$ of $G(K)$.  Then \cite[\S2.3]{Boalch_Parahoric} considers the subspace
\begin{align*}
\mathcal{A}_\theta := \wp_\theta \, \frac{dz}{z}
\end{align*}
of \emph{logarithmic parahoric} or \emph{logahoric connections} of the space $\g(K) \, dz$, of meromorphic connections over the trivial $G$-bundle over the formal disc.  Observe that the space of such connections only depends on the Lie algebra $\wp_\theta$, rather than $\theta$ itself.

We will want to restrict this definition somewhat for our purposes by imposing a condition analogous to the second condition in (\ref{parcxn}).  Let $\widetilde{\omega} \, dz/z$ be a logahoric connection (for the choice of $\theta$), with $\widetilde{\omega} \in \wp_\theta$.  We may consider its weight zero piece,
\begin{align*}
\widetilde{\omega}_0 \in \g(K)_0^\theta.
\end{align*}

\begin{defn}
We will say that a logahoric connection \emph{satisfies the residue condition} if its weight zero piece is precisely $\theta$.  We will denote by $\mathcal{A}_\theta^\Res$ the space of logahoric connections satisfying the residue condition.
\end{defn}

This definition very much depends on the data of $\theta \in \t_\R$.

\begin{lem} \label{rescondn}
A logahoric connection $\widetilde{\omega} \, dz/z$ satisfies the residue condition if and only if
\begin{align*}
\widetilde{\omega} \in \theta + \t(z A) + \sum_{\alpha(\theta) \in \Z} \g_\alpha( z^{1 + m_\alpha(\theta)} A) + \sum_{\alpha(\theta) \not\in \Z} \g_\alpha( z^{m_\alpha(\theta)} A).
\end{align*}
\end{lem}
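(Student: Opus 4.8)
The plan is to make the $\theta$-weight grading on $\g(K)$ explicit, expand $\widetilde{\omega}$ monomial by monomial using the decomposition $\wp_\theta = \t(A) \oplus \sum_{\alpha \in \Phi} \g_\alpha(z^{m_\alpha(\theta)} A)$, read off the weight-zero piece, and compare it with $\theta$; the equivalence then falls out of a short index computation.

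First I would fix the root vectors $x_\alpha \in \g_\alpha$ (recall $G$ is semisimple, so each $\g_\alpha$ is a line) and write an arbitrary element of $\wp_\theta$ as
\begin{align*}
\widetilde{\omega} = \sum_{j \geq 0} h_j z^j + \sum_{\alpha \in \Phi} \ \sum_{j \geq m_\alpha(\theta)} c_{\alpha, j}\, x_\alpha z^j, \qquad h_j \in \t, \ c_{\alpha, j} \in \C.
\end{align*}
Since $\t \subseteq \g_0^\theta$ and $x_\alpha \in \g_{\alpha(\theta)}^\theta$, the definition of $\g(K)_\lambda^\theta$ shows that $h_j z^j$ has $\theta$-weight $j$ and $c_{\alpha, j}\, x_\alpha z^j$ has $\theta$-weight $\alpha(\theta) + j$. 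Hence the weight-zero piece $\widetilde{\omega}_0$ of $\widetilde{\omega}$ is obtained by retaining the $j = 0$ term of the torus part and, for each root $\alpha$, the $j = -\alpha(\theta)$ term of the $\g_\alpha$ part, whenever such a term actually occurs in the sum.

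The key bookkeeping step — and the one I expect to require the most care — is deciding when such a $\g_\alpha$ term appears. The index $j = -\alpha(\theta)$ is admissible precisely when it is an integer and satisfies $j \geq m_\alpha(\theta) = -\lfloor \alpha(\theta) \rfloor$; both conditions hold if and only if $\alpha(\theta) \in \Z$, in which case $\lfloor \alpha(\theta) \rfloor = \alpha(\theta)$ and the admissible index is exactly the minimal one, $j = m_\alpha(\theta)$. Consequently
\begin{align*}
\widetilde{\omega}_0 = h_0 + \sum_{\alpha(\theta) \in \Z} c_{\alpha, m_\alpha(\theta)}\, x_\alpha,
\end{align*}
while for $\alpha(\theta) \not\in \Z$ the $\g_\alpha$-part contributes nothing to weight zero.

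Finally I would impose the residue condition $\widetilde{\omega}_0 = \theta$. Since $\theta \in \t$, comparing $\t$-components against root-space components in the displayed expression shows this is equivalent to $h_0 = \theta$ together with $c_{\alpha, m_\alpha(\theta)} = 0$ for every $\alpha$ with $\alpha(\theta) \in \Z$. Re-assembling: $h_0 = \theta$ says the $\t(A)$-part of $\widetilde{\omega}$ lies in $\theta + \t(zA)$; vanishing of the leading coefficient $c_{\alpha, m_\alpha(\theta)}$ for integral $\alpha(\theta)$ shifts the $\g_\alpha$-part into $\g_\alpha(z^{1 + m_\alpha(\theta)} A)$; and for non-integral $\alpha(\theta)$ nothing changes, so the $\g_\alpha$-part stays in $\g_\alpha(z^{m_\alpha(\theta)} A)$. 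This is exactly the asserted description of $\widetilde{\omega}$, and since every implication used is an equivalence, both directions follow at once.
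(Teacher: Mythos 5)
Your argument is correct: the paper states Lemma \ref{rescondn} without proof, and your direct computation of the weight-zero piece --- isolating the index $j=0$ in the $\t(A)$-part and $j=-\alpha(\theta)$ in each $\g_\alpha$-part, and observing that the latter is an admissible integer index exactly when $\alpha(\theta)\in\Z$, in which case it coincides with the minimal index $m_\alpha(\theta)$ --- is precisely the routine verification the authors leave to the reader. The reduction of $\widetilde{\omega}_0=\theta$ to $h_0=\theta$ and $c_{\alpha,m_\alpha(\theta)}=0$ via the root-space decomposition of $\g$ is sound, so nothing is missing.
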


Let $B := A[t]/(t^r -z) = \C [\![ t ]\!]$ and let $L = \C(\!( t )\!)$ be its quotient field.  We consider a trivial $G$-bundle $E = \widehat{\DD} \times G$ over $\widehat{\DD} := \Spec \, B$ with a compatible $\mmu_r$-action, the $\mmu_r$-action on $\widehat{\DD}$ coming from that on $B$, where it is given by $\gamma \cdot t = \zeta^{-1} t$, with $\gamma \in \mmu_r$ a fixed generator.  As above, this may be realized via a homomorphism $\theta : \mmu_r \to G(L)$, which we may assume factors through $T(L) \subseteq G(L)$.  In fact, we may think of $\theta$ as the restriction of a cocharacter in $Y(T(L))$, which we lazily also denote by $\theta$.  We have
\begin{align*}
Y(T(L)) = Y(T(K)) \otimes_\Z \tfrac{1}{r} \Z \subseteq Y(T(K))
\end{align*}
and so we will think of $\theta$ as an element of $Y(T(K)) \otimes_\Z \Q$.

\begin{prop} \label{logastackcxn}
Logahoric connections satisfying the residue condition for $\theta$ (i.e., elements of $\mathcal{A}_\theta^\Res$) are in a one-to-one correspondence with $\mmu_r$-connections on the trivial $G$-bundle over $\widehat{\DD}$, for which the $\mmu_r$-action is given by the homomorphism $\theta$.
\end{prop}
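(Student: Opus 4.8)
The plan is to make explicit the dictionary between a $\mmu_r$-connection on the trivial $G$-bundle over $\widehat{\DD} = \Spec\, B$ (with the $\mmu_r$-action encoded by $\theta$) and a logahoric connection on the trivial $G$-bundle over $\DD = \Spec\, A$, and then match up the residue condition on the $A$-side with the mere existence of a $\mmu_r$-equivariant (hence pole-free) connection on the $B$-side, exactly as in the vector bundle computation in the proof of Proposition \ref{rootstackconnection}. First I would record that, since $E = \widehat{\DD}\times G$ is the trivial bundle, a connection on $E$ is just a $\g$-valued one-form, i.e.\ an element $\widetilde{\omega}_B \in \g \otimes_\C \Omega^1_{B/\C}$, and $\Omega^1_{B/\C}$ is free on $dt$; similarly a connection on the trivial $G$-bundle over $\DD$ is an element of $\g(K)\,dz$, and $dz = r t^{r-1} dt$, so $dz/z = r\,dt/t$. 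The $\mmu_r$-action on $E$ twisted by $\theta$ acts on $\g$ by the grading $\g = \bigoplus_\lambda \g^\theta_\lambda$ (with $\zeta \in \mmu_r$ acting on $\g^\theta_\lambda$ by $\zeta^{r\lambda}$, noting $r\lambda \in \Z$ for the relevant $\lambda$) and on $dt$ by $\zeta^{-1}$; demanding $\mmu_r$-invariance of the connection form forces, component by component, the precise vanishing orders in $t$ that cut out $\wp_\theta$ after the substitution $t^r = z$. This is the structural heart of the argument and is essentially the $G$-analogue of the "$\gamma\cdot\omega_{ij} = \zeta^{p_j-p_i+1}\omega_{ij}$" computation already carried out for vector bundles.

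Concretely, the plan is: (1) given a $\mmu_r$-connection $\widetilde{\omega}_B\,\tfrac{dt}{t}$ on $E$ (pole-free, as connections on a bundle over the formal disc have no poles once written against $dt/t$ one picks up at worst the gauge term — but here the bundle is literally trivial so $\widetilde{\omega}_B \in \g\otimes B\,\tfrac{dt}{t}$ with $\widetilde{\omega}_B \in \g(B)$, and in fact we want $\widetilde{\omega}_B\,dt \in \g\otimes B\,dt$, equivalently $\widetilde{\omega}_B \in t\g(B)$ after the $dz/z$ normalization — I would be careful here about whether "connection" means the form is regular in $dt$ or in $dt/t$; matching the convention $\mathcal{A}_\theta = \wp_\theta\,dz/z$ on the other side dictates the choice). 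The $\mmu_r$-invariance condition, written out in the $\theta$-grading, says: the $\g^\theta_\lambda$-component of $\widetilde{\omega}_B$ lies in $t^{c(\lambda)}B$ for the appropriate integer $c(\lambda)$ making $\zeta^{r\lambda}\cdot\zeta^{-c(\lambda)} = 1$, i.e.\ $c(\lambda) \equiv r\lambda \pmod r$. (2) Substitute $t^r = z$: an element of $\g^\theta_\lambda \otimes t^{c(\lambda)}B$ becomes, after collecting, an element of $\g^\theta_\lambda \otimes z^{\lceil\text{something}\rceil}A$, and summing over $\lambda$ one recognizes exactly $\wp_\theta = \sum_{\lambda\geq -i}\g^\theta_\lambda z^i$ in the weight description of Section 7.1 — up to the weight-zero term, which is the free parameter. (3) Then impose that the $\mmu_r$-action is given by *$\theta$ itself*: the statement to prove is that the surviving freedom in the weight-zero piece $\widetilde{\omega}_0 \in \g(K)_0^\theta$ is pinned to the value $\theta$ precisely when the $B$-connection, after gauge transformation by the cocharacter $t^\theta$ (which is what trivializes the $\mmu_r$-linearization versus the canonical one), has no pole; this gauge term contributes exactly $\theta\,\tfrac{dz}{z}$ to the residue, matching the "$g^{-1}dg$ contributes $\delta_{ij}p_i\,dt/t$" term in the vector bundle proof. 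Invoking Lemma \ref{rescondn}, which already translates the residue condition into the explicit membership $\widetilde{\omega} \in \theta + \t(zA) + \cdots$, lets me identify the image of the correspondence with $\mathcal{A}_\theta^\Res$ on the nose.

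For the reverse direction I would run the same substitution backwards: given $\widetilde{\omega} \in \theta + \t(zA) + \sum_{\alpha(\theta)\in\Z}\g_\alpha(z^{1+m_\alpha(\theta)}A) + \sum_{\alpha(\theta)\notin\Z}\g_\alpha(z^{m_\alpha(\theta)}A)$ (Lemma \ref{rescondn}), gauge-transform by $t^{-\theta}$ and rewrite in $t$; the subtraction of $\theta$ in the first displayed term is exactly cancelled by the $dt/t$-part of $g^{-1}dg$ for $g = t^\theta$, so the resulting form becomes regular in $dt$ and manifestly $\mmu_r$-equivariant for the $\theta$-twisted action, hence a legitimate $\mmu_r$-connection on $E$. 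Checking the two constructions are mutually inverse is, as in Proposition \ref{rootstackconnection}, just the observation that both are the single "change of gauge by $t^\theta$" operation read in two directions. The main obstacle I anticipate is purely bookkeeping: getting the floor/ceiling functions and the $\pm 1$ shifts in the exponents right, in particular the split between the $\alpha(\theta)\in\Z$ and $\alpha(\theta)\notin\Z$ cases (which is the $G$-incarnation of the "$p_i > p_j$ versus $p_i \leq p_j$" case split in the vector bundle proof, and which is exactly where the $+1$ in $z^{1+m_\alpha(\theta)}A$ comes from), together with pinning down once and for all whether the connection forms are regular against $dt$ or against $dt/t$ so that the normalization $\mathcal{A}_\theta = \wp_\theta\,dz/z$ comes out correctly; none of this is deep, but it is where an error would hide.
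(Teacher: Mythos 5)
Your proposal is correct and follows essentially the same route as the paper: write the connection form in a decomposition of $\g$ adapted to $\theta$ (the paper uses the root-space decomposition $\t\oplus\bigoplus_\alpha\g_\alpha$, which refines your $\theta$-grading but yields the identical case split on $\alpha(\theta)\in\Z$ versus $\alpha(\theta)\notin\Z$), extract the vanishing orders in $t$ from $\mmu_r$-invariance, substitute $t^r=z$, and apply the gauge transformation by $t^\theta$ whose term $t^{-\theta}d(t^\theta)=\theta\,dz/z$ produces exactly the weight-zero piece required by Lemma \ref{rescondn}. The convention point you flag is resolved as you guessed: the connection on the trivial bundle over $\widehat{\DD}$ is taken regular against $dt$, and the $dz/z$ normalization emerges after the substitution and gauge change.
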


\begin{proof}
A connection $\omega$ on $E$ is given by a $\g$-valued $1$-form on $W$; it may be written 
\begin{align*}
\omega = \widetilde{\omega} \, dt = \left( \sum_{i=1}^l \omega_i h_i + \sum_{\alpha \in \Phi} \omega_\alpha x_\alpha \right) dt,
\end{align*}
for some $\omega_i, \omega_\alpha \in B$, where $h_1, \cdots, h_l$ is a basis of 
$\t$.

The condition for $\omega$ to be $\mmu_r$-invariant (cf.\ proof of Proposition \ref{rootstackconnection}) is that $\gamma^* \omega = \omega$, or
\begin{align} \label{omegainvariance}
\Ad \, \theta(\gamma) (\gamma \cdot \widetilde{\omega}) \, (\gamma \cdot dt) = \widetilde{\omega} \, dt.
\end{align}
For $1 \leq i \leq l$, this translates to 
\begin{align*}
\omega_i(t) \, dt = \zeta^{-1} \omega_i( \zeta^{-1} t) \, dt,
\end{align*}
and it follows that 
\begin{align*}
\omega_i (t) \, dt = \nu_i(z) \, dz
\end{align*}
for some $\nu_i \in A$.  For $\alpha \in \Phi$, (\ref{omegainvariance}) implies
\begin{align*}
\omega_\alpha(t) \, dt = \alpha\big( r\theta(\gamma) \big) \zeta^{-1} \omega_\alpha( \zeta^{-1} t) \, dt = \zeta^{ r\alpha(\theta) -1} \omega_\alpha( \zeta^{-1} t) \, dt.
\end{align*}
In this case, one sees that
\begin{align*}
\omega_\alpha \, dt = \begin{cases} \nu_\alpha \, dz, & \text{ if } m_\alpha(\theta) = -\alpha(\theta), \text{ i.e.}, \alpha(\theta) \in \Z \\ t^{r \alpha(\theta)} z^{ m_\alpha(\theta) - 1} \nu_\alpha \, dz, & \text{ if } \alpha(\theta) + m_\alpha(\theta) > 0 , \end{cases}
\end{align*}
for some $\nu_\alpha \in A$, where one recalls that $m_\alpha(\theta) = -\lfloor \alpha(\theta) \rfloor$.  Therefore,
\begin{align*}
\omega = \left( \sum_{i=1}^l z \nu_i h_i + \sum_{\alpha(\theta) \in \Z} z \nu_\alpha x_\alpha + \sum_{\alpha(\theta) \not\in \Z} t^{r \alpha(\theta)} z^{m_\alpha(\theta)} \nu_\alpha x_\alpha \right) \frac{dz}{z}.
\end{align*}

Now, using the change of frame $t^\theta$, the connection form becomes 
\begin{align*}
\Ad \, t^{-\theta}( \omega) + t^{-\theta} d(t^{\theta}) = \left( \theta + \sum_{i=1}^l z \nu_i h_i + \sum_{\alpha(\theta) \in \Z} z^{1 + m_\alpha(\theta)} \nu_\alpha x_\alpha + \sum_{\alpha(\theta) \not\in \Z} z^{m_\alpha(\theta)} \nu_\alpha x_\alpha \right) \frac{dz}{z},
\end{align*}
and from Lemma \ref{rescondn}, we find that the induced connection lies in $\mathcal{A}_\theta^\Res$.  Conversely, a logahoric connection satisfying the residue condition is of the above form, and reversing the process (via the change of frame $t^\theta$), we recover a connection on the trivial $G$-bundle over $W$ with no poles.
\end{proof}

\subsubsection{Global Connections}

Let $E \to X$ be a $\GG_\theta$-torsor.  This may be given by a $G$-bundle $E|_{X \setminus x}$ over $X \setminus x$, a parahoric torsor $E|_{\DD_x}$ over $\DD_x$ and an isomorphism $\eta : E|_{X \setminus x}|_{\DD_x^\times} \arsim E|_{\DD_x} |_{\DD_x^\times}$ (the reader may wish to have another look at the diagram (\ref{Xx})).  We define a \emph{connection on $E$} to be a pair consisting of a connection $\omega_0$ on the $G$-bundle $E|_{X \setminus x}$ and a logahoric connection $\omega_x$ on $E|_{\DD_x}$ satisfying the residue condition such that 
\begin{align*}
\omega_0|_{\DD_x^\times} = \eta^* \omega_x|_{\DD_x^\times}.
\end{align*}

\begin{prop}
Let $\E$ be a $G$-bundle on $\XX$ and let $E$ be the corresponding parahoric bundle on $X$.  Then there is a one-to-one correspondence between connections on $\E$ and connections on $E$.
\end{prop}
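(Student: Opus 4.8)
The plan is to exploit that $\XX$ differs from $X$ only over $x$, and to match the data defining a connection on $\E$ against the pair $(\omega_0,\omega_x)$ defining a connection on $E$, piece by piece.  Since $s$ vanishes only at $x$, the section $s$ trivialises $\OO_X(x)$ over $X\setminus x$ and over $\DD_x^\times$; hence $\pi$ restricts to an isomorphism over $X\setminus x$ and to $\DD_x^\times\times_X\XX\cong\DD_x^\times$, while over a neighbourhood of $x$ one has $\DD_x\times_X\XX\cong[\mmu_r\backslash\widehat{\DD}]$ with $\widehat{\DD}=\Spec\,B$, $B=\C[\![t]\!]$, as in Example \ref{affinerootstack}.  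Because a connection is a local, first-order datum, a Beauville--Laszlo type descent shows that a connection on $\E$ is the same thing as a connection on the open substack lying over $X\setminus x$ together with a connection on $\E|_{\DD_x\times_X\XX}$, agreeing after restriction to $\DD_x^\times$.  This is exactly the shape of the definition of a connection on $E$, so it remains to identify the two summands.  (Alternatively, one could pass through the Galois cover of Corollary \ref{globalquotient} and work with $\Gamma$-invariant connections, but the local picture above matches the definition on $E$ more directly.)

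Over $X\setminus x$ the bundle $\E$ restricts to the $G$-bundle $E|_{X\setminus x}$, so its connections there are literally the connections $\omega_0$.  For the formal part, $\E|_{\DD_x\times_X\XX}$ is a $(\mmu_r,G)$-bundle on $\widehat{\DD}$ of local type $\tau$; since every $G$-bundle over the spectrum of a complete local ring with algebraically closed residue field is trivial, it is isomorphic, as a $(\mmu_r,G)$-bundle, to the trivial bundle with $\mmu_r$-action the homomorphism $\theta=\theta_\tau$.  A connection on $\E|_{\DD_x\times_X\XX}$ is then precisely a $\mmu_r$-invariant connection on this trivial bundle, and Proposition \ref{logastackcxn} identifies these with the elements of $\mathcal{A}_\theta^\Res$, i.e.\ (after transport to $E|_{\DD_x}$ via the identification of $E|_{\DD_x}$ with the parahoric torsor attached to $\E|_{\DD_x\times_X\XX}$) with the data $\omega_x$.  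Finally, the gluing isomorphism $\eta$ appearing in the definition of a connection on $E$ is induced from the transition data of $\E$ over $\DD_x^\times\times_X\XX$, and the change of frame $t^\theta$ used in the proof of Proposition \ref{logastackcxn} is precisely what implements $\eta$ on $\DD_x^\times$; hence the condition that the two restrictions of a connection on $\E$ agree over $\DD_x^\times$ is exactly $\omega_0|_{\DD_x^\times}=\eta^*\omega_x|_{\DD_x^\times}$.  The two assignments are inverse to each other, since each is built from the identifications just described.

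The hard part will be the Beauville--Laszlo step: making precise that a connection on the stack $\XX$ is faithfully recovered from its restriction over $X\setminus x$ and its restriction to the formal neighbourhood $\DD_x\times_X\XX$ (which is where the logahoric data lives), with a matching condition over $\DD_x^\times$.  This is a formal-gluing argument that works because connections are local differential operators and these pieces ``cover'' $\XX$; it is the decorated analogue of the gluing already underlying the equivalence $\Bun_G^\tau\XX\cong\Bun_{\GG_\theta}X$.  One must also check that the bijection of Proposition \ref{logastackcxn} is compatible with that equivalence, namely that both are computed by the same unwinding at the atlas $\widehat{\DD}$, but this is bookkeeping rather than a genuine obstacle.
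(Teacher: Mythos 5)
Your proposal is correct and follows essentially the same route as the paper: restrict the connection to the open part $X\setminus x$ and to the formal part $\DD_x\times_X\XX$, invoke Proposition \ref{logastackcxn} to convert the $\mmu_r$-invariant formal connection into a logahoric one satisfying the residue condition, and match the two pieces over $\DD_x^\times$ via $\eta$. The only difference is one of emphasis: you explicitly flag the formal-gluing (Beauville--Laszlo type) step as the point requiring care, whereas the paper simply asserts that $\eta$ provides the patching data.
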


\begin{proof}
It is clear that if we begin with a connection $\omega$ on a $G$-bundle $\E$ over $\XX$, then we can take restrictions to $(X \setminus x) \times_X \XX \cong X \setminus x$ and to $\DD_x \times_X \XX \cong{ \widehat{\DD} }$, the latter being a connection compatible with the $\mmu_r$-action, and hence by Proposition \ref{logastackcxn}, yields a logahoric connection on the parahoric torsor over $\DD_x$.  The isomorphism over $\DD_x^\times$ comes from the fact that $\omega$ is defined over all of $\XX$.

Conversely, given $\omega_0, \omega_x$ as in the definition, the pullback of $\omega_x$ to $\E|_{\DD_x \times_X \XX }$ gives a well-defined connection (again by Proposition \ref{logastackcxn}).  The isomorphism $\eta$ gives patching data over $\E|_{ \DD_x^\times \times_X \XX }$, and so we get a connection over $\E$.
\end{proof}

One cheaply obtains the following.

\begin{cor}
Let $E \to X$ be a $\GG_\theta$-torsor.  Then it admits a connection in the above sense if and only if the corresponding $G$-bundle on $\XX$ satisfies the condition of Theorem \ref{connectioncondition}.
\end{cor}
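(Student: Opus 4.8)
The plan is to obtain this statement by simply concatenating two facts already established: the preceding Proposition, which provides a bijection between connections on the $G$-bundle $\E$ over $\XX$ and connections on the parahoric torsor $E$ over $X$, and Theorem \ref{connectioncondition}, which characterizes when $\E$ admits a connection. There is essentially no new content; the work is in lining up the two correspondences.

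First I would recall the set-up: under the equivalence $\Bun_G^\tau \XX \arsim \Bun_{\GG_\theta} X$ (with $\theta = \theta_\tau$ and $Z = \{x\}$), the $\GG_\theta$-torsor $E$ corresponds, up to isomorphism, to a $G$-bundle $\E$ over $\XX$ of local type $\tau$. By the Proposition immediately preceding this corollary, $E$ admits a connection in the sense defined there --- that is, a pair $(\omega_0, \omega_x)$ with $\omega_0$ a connection on $E|_{X \setminus x}$ and $\omega_x$ a logahoric connection on $E|_{\DD_x}$ satisfying the residue condition, agreeing over $\DD_x^\times$ via $\eta$ --- if and only if $\E$ admits a connection in the sense of Section \ref{pbstack}.

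Next I would invoke Theorem \ref{connectioncondition} for $\E$: it admits a connection if and only if for every reduction $\F$ of $\E$ to a Levi factor $L$ of a parabolic subgroup of $G$ and every character $\chi : L \to \C^\times$, the associated line bundle $\mathcal{M}^\chi = \F \times^\chi \C$ on $\XX$ satisfies $\deg_\XX \mathcal{M}^\chi = 0$. Chaining the two ``if and only if'' statements yields precisely the assertion that $E$ admits a connection if and only if the corresponding $\E$ satisfies the condition of Theorem \ref{connectioncondition}.

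The main (and really the only) point requiring care is bookkeeping rather than mathematics: one must check that the hypotheses needed to realize $\XX = \XX_{\OO_X(x), s, r}$ as a global quotient stack $[\Gamma \backslash Y]$ --- and hence to apply Theorem \ref{connectioncondition} --- are in force, which for a single parabolic point $x$ means $g \geq 1$ (as in Corollary \ref{globalquotient}); and that ``connection on $E$'' in the statement is exactly the notion for which the preceding Proposition was proved. Once these are noted, nothing further is needed, which is why the corollary is obtained ``cheaply.''
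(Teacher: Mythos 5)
Your proposal is correct and matches the paper's intent exactly: the paper offers no written proof beyond the remark that one ``cheaply obtains'' the corollary, and the intended argument is precisely the chaining of the preceding Proposition (the bijection between connections on $\E$ and connections on the parahoric torsor $E$) with Theorem \ref{connectioncondition}. Your bookkeeping remark about needing the global-quotient hypothesis (here $g \geq 1$, since $Z$ is a single point) to invoke the theorem is a fair and worthwhile observation that the paper leaves implicit.
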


\small

\bibliographystyle{amsalpha}

\end{document}